\patchcmd{\@settitle}{\uppercasenonmath\@title}{}{}{}
\newtheorem{theorem}[subsection]{Theorem}
\newtheorem{proposition}[subsection]{Proposition}
\newtheorem{corollary}[subsection]{Corollary}
\newtheorem{conjecture}[subsection]{Conjecture}
\newtheorem{definition}[subsection]{Definition}
\newtheorem{claim}[subsection]{Claim}
\newtheorem{problem}[subsection]{Problem}
\newtheorem{example}[subsection]{Example}
\newtheorem{remark}[subsection]{Remark}
\def\loccitt{\emph{loc. cit.}}
\def\loccit{\emph{loc. cit. }}
\def\fg{{\mathfrak{g}}}
\def\fsl{{\mathfrak{sl}}}
\def\fgl{{\mathfrak{gl}}}
\def\fZ{{\mathfrak{Z}}}
\def\BC{{\mathbb{C}}}
\def\BN{{\mathbb{N}}}
\def\BF{{\mathbb{F}}}
\def\BP{{\mathbb{P}}}
\def\BQ{{\mathbb{Q}}}
\def\BZ{{\mathbb{Z}}}
\def\CA{{\mathcal{A}}}
\def\CB{{\mathcal{B}}}
\def\CL{{\mathcal{L}}}
\def\CN{{\mathcal{N}}}
\def\CO{{\mathcal{O}}}
\def\CR{{\mathcal{R}}}
\def\CS{{\mathcal{S}}}
\def\CV{{\mathcal{V}}}
\def\Hom{\textrm{Hom}}
\def\vs{\varsigma}
\def\and{\textrm{ }\&\textrm{ }}
\def\sym{\textrm{sym}}
\def\tzeta{{\widetilde{\zeta}}}
\def\nn{{{\BN}}^I}
\def\zz{{{\BZ}}^I}
\def\qq{{{\BQ}}^I}
\def\qqp{{{\BQ}}_+^I}
\def\bk{\boldsymbol{k}}
\def\bl{\boldsymbol{l}}
\def\bm{\boldsymbol{m}}
\def\bn{\boldsymbol{n}}
\def\bv{\boldsymbol{v}}
\def\bw{\boldsymbol{w}}
\def\bz{\boldsymbol{z}}
\def\bX{\boldsymbol{X}}
\def\bW{\boldsymbol{W}}
\def\bZ{\boldsymbol{Z}}
\def\bs{{\boldsymbol{\vs}}}
\def\bth{{\boldsymbol{\theta}}}
\def\b0{{\boldsymbol{0}}}
\def\hdeg{\text{hdeg }}
\def\vdeg{\text{vdeg }}
\def\op{\text{op}}
\def\oii{\overrightarrow{ii}}
\def\oij{\overrightarrow{ij}}
\def\oji{\overrightarrow{ji}}
\def\loc{\text{loc}}
\begin{document}

\title[Shuffle algebras for quivers and $R$-matrices]{\Large{\textbf{Shuffle algebras for quivers and $R$-matrices}}}

\author[Andrei Negu\cb t]{Andrei Negu\cb t}
\address{MIT, Department of Mathematics, Cambridge, MA, USA}
\address{Simion Stoilow Institute of Mathematics, Bucharest, Romania}
\email{andrei.negut@gmail.com}

\maketitle

\begin{abstract} 

We define slope subalgebras in the shuffle algebra associated to a (doubled) quiver, thus yielding a factorization of the universal $R$-matrix of the double of the shuffle algebra in question. We conjecture that this factorization matches the one defined by \cite{AO,MO,O1,O2,OS} using Nakajima quiver varieties.

\end{abstract}

$$$$

\section{Introduction}

\medskip

Fix a quiver $Q$ with vertex set $I$ and edge set $E$; edge loops and multiple edges are allowed. We consider a certain Hopf algebra:
$$
\CA = \CA^+ \otimes (\text{Cartan subalgebra}) \otimes \CA^-
$$
where $\CA^+$ is the shuffle algebra associated to the (double of the) quiver $Q$ and $\CA^-$ is its opposite. When $Q$ is a finite (respectively affine) Dynkin diagram, the algebra $\CA$ is the quantum loop (respectively quantum toroidal) algebra. In general, the shuffle algebra $\CA^+$ matches the localized $K$-theoretic Hall algebra of the quiver $Q$ (\cite{N}). \\

\noindent The main purpose of the present paper is to define and study \textbf{slope} subalgebras:
$$
\CB_{\bm}^{\pm} \subset \CA^\pm
$$
for any $\bm \in \qq$, and produce a Hopf algebra:
$$
\CB_{\bm} = \CB^+_{\bm} \otimes (\text{Cartan subalgebra}) \otimes \CB^-_{\bm}
$$
For non-trivial reasons, there exist inclusions $\CB_{\bm} \subset \CA$ which preserve the product and the Hopf pairing, but not the coproduct and antipode. As $\CA$ and $\CB_{\bm}$ arise as Drinfeld doubles, we may consider their universal $R$-matrices: \footnote{The symbol $\widehat{\otimes}$ refers to the fact that the universal $R$-matrices lie in certain completions of the algebras in question, as they are given by infinite sums. Meanwhile, the primes refer to the fact that $\CR'$ is only the ``off-diagonal" part of the universal $R$-matrix, see \eqref{eqn:univ r} and \eqref{eqn:univ r sub}}
$$
\CR' \in \CA \ \widehat{\otimes} \ \CA \qquad \text{and} \qquad \CR'_{\bm} \in \CB_{\bm} \ \widehat{\otimes} \ \CB_{\bm}
$$
Our main result, proved by combining Corollaries \ref{cor:iso} and \ref{cor:r-matrix}, is the following. \\

\begin{theorem}
\label{thm:intro}

For any $\bm \in \qq$ and $\bth \in \qqp$, the multiplication map induces an isomorphism (the arrow $\rightarrow$ refers to taking the product in increasing order of $r$):
\begin{equation}
\label{eqn:intro 1}
\bigotimes_{r \in \BQ}^{\rightarrow} \CB_{\bm+r\bth}^\pm \xrightarrow{\sim}\CA^\pm
\end{equation}
which preserves the Hopf pairings on the two sides, and thus leads to a factorization:
\begin{equation}
\label{eqn:intro 2}
\CR' = \prod_{r \in \BQ}^{\rightarrow} \CR'_{\bm+r\bth}
\end{equation}
of the (off-diagonal part of the) universal $R$-matrix. \\

\end{theorem}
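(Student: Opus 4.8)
The plan is to reduce the theorem to two structural inputs about the slope subalgebras $\CB_{\bm}^{\pm}$, each of which is presumably established earlier in the paper: first, that there is a grading or filtration of $\CA^{\pm}$ by slopes in which the associated graded pieces are precisely the $\CB^{\pm}_{\bm}$ (this is what makes the factorization \eqref{eqn:intro 1} conceivable at all), and second, that the Hopf pairing on $\CA$ is ``triangular'' with respect to this slope decomposition, i.e.\ the pairing between $\CB^+_{\bm+r\bth}$ and $\CB^-_{\bm+r'\bth}$ vanishes unless $r = r'$, in which case it restricts to the pairing internal to $\CB_{\bm+r\bth}$. Granting these, I would proceed as follows.

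First I would prove \eqref{eqn:intro 1}. The natural approach is to fix the direction $\bth \in \qqp$, which linearly orders the relevant slopes $\bm + r\bth$ by the parameter $r \in \BQ$, and to show that multiplication $\bigotimes^{\rightarrow}_{r} \CB^{\pm}_{\bm+r\bth} \to \CA^{\pm}$ is both injective and surjective. Surjectivity should follow from an induction on the $\zz$-grading of $\CA^{\pm}$: any element can be written, modulo terms of ``lower slope complexity,'' as a product of elements of individual slope subalgebras, by appealing to the slope-ordering construction of the $\CB^{\pm}_{\bm}$ inside $\CA^{\pm}$. Injectivity is the more delicate half: here I expect one compares Hilbert series (in the $\zz$-grading, with suitable refinement) of the two sides, using a PBW-type count — each $\CB^{\pm}_{\bm}$ should have an explicitly known graded dimension, and the product of these over all $r$ must reproduce the graded dimension of $\CA^{\pm}$. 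Alternatively, injectivity can be extracted from the nondegeneracy of the Hopf pairing together with the triangularity statement, which is the cleaner route and dovetails with the second assertion of the theorem.

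The compatibility with the Hopf pairings is then essentially the triangularity input: the pairing on $\bigotimes^{\rightarrow}_r \CB^{\pm}_{\bm+r\bth}$ is the ``tensor product'' pairing (each factor paired with its mirror, cross-terms zero), and one checks that under the multiplication isomorphism this matches the pairing on $\CA^{\pm}$ inherited from the Drinfeld double structure. This uses the general fact that for a bialgebra pairing, $\langle ab, c\rangle = \langle a \otimes b, \Delta(c)\rangle$, combined with the vanishing of cross-slope pairings to collapse the sum. Finally, \eqref{eqn:intro 2} is a formal consequence: the universal $R$-matrix $\CR'$ of a Drinfeld double $\CA = \CA^+ \otimes (\cdots) \otimes \CA^-$ is the canonical element of the Hopf pairing between $\CA^+$ and $\CA^-$, i.e.\ $\CR' = \sum_\alpha e_\alpha \otimes e^\alpha$ for dual bases; if $\CA^{\pm}$ factors as an ordered product of the $\CB^{\pm}_{\bm+r\bth}$ compatibly with the pairing, then a dual basis of $\CA^{\pm}$ is obtained by concatenating (in the appropriate order) dual bases of the factors, and the canonical element factors accordingly as the ordered product $\prod^{\rightarrow}_r \CR'_{\bm+r\bth}$. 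One must be careful about the order of multiplication on the two tensor factors (the $+$ side in increasing $r$, the $-$ side in the matching order so that the pairing pairs like with like), and about convergence of the infinite product in the completion $\CA \woo \CA$, but these are bookkeeping issues once the ordering conventions are fixed.

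The main obstacle I anticipate is the injectivity in \eqref{eqn:intro 1}, equivalently the assertion that the slope subalgebras are ``independent'' and together exhaust $\CA^{\pm}$ with no overlap — making precise the PBW/Hilbert-series count when there are infinitely many slopes accumulating, and ensuring the ordered infinite product of subalgebras is well-defined degree by degree (only finitely many factors contribute in each $\zz$-degree). This is where the structure theory of the $\CB^{\pm}_{\bm}$ developed earlier — presumably their description via ``slope $\leq \bm$'' versus ``slope $< \bm$'' subalgebras and the resulting short exact sequences — must be invoked in an essential way, rather than treated as a black box.
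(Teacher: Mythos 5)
Your overall architecture coincides with the paper's: the Hopf pairing is triangular across slopes (Proposition \ref{prop:pbw pairing}), injectivity is deduced from that triangularity together with nondegeneracy via dual bases (Propositions \ref{prop:non deg} and \ref{prop:inj}) --- this is exactly your ``cleaner route,'' and note that your Hilbert-series alternative is not actually available, since the graded dimensions of the $\CB_{\bm|\bn}$ are not explicitly known in general (cf.\ Conjecture \ref{conj:kac poly}) --- and the factorization of $\CR'$ is the dual-basis bookkeeping you describe (Corollary \ref{cor:r-matrix}). You also correctly observe that only finitely many slopes contribute in each fixed degree, and that the vanishing of cross-slope pairings is forced by degree reasons because $\bth \in \qqp$.

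The genuine gap is surjectivity, which you dispatch in one sentence (``any element can be written, modulo terms of lower slope complexity, as a product\dots'') but which is the technical heart of the paper and cannot be read off from the definitions; relatedly, you misidentify injectivity, rather than surjectivity, as the main obstacle. Concretely (Proposition \ref{prop:surj}): given $F \in \CA^+_{\leq \bm+p\bth}$ of naive slope $\bm+r\bth$, one inducts on $\hdeg F = \bn$ and examines the components of $\Delta(F)$ in bidegrees $(\bn-\bk,d-e)\otimes(\bk,e)$ (``hinges''), calling a hinge bad when $e > (\bm+r\bth)\cdot\bk$. If there are no bad hinges then $F \in \CB^+_{\bm+r\bth}$ by Proposition \ref{prop:cop slope}. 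Otherwise one takes a maximal bad hinge, of slope $\rho$, shows that the corresponding component of $\Delta(F)$ has the form $\sum_t h_{\bk} F_{1,t}\otimes F_{2,t}$ with $F_{2,t} \in \CB_{\bm+\rho\bth|\bk}$ and, crucially, each $F_{1,t}$ of slope strictly less than $\rho$ --- this last point is a delicate estimate on the degrees of the monomials of $F$ that survive the limit defining the hinge component (Claim \ref{claim:final}) --- and then subtracts a suitable multiple of $G = \sum_t F_{1,t}F_{2,t}$, which lies in the image of the multiplication map by the induction hypothesis, thereby killing that bad hinge without creating larger ones. Finitely many iterations reduce $F$ to an element of $\CB^+_{\bm+r\bth}$. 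Without this (or some substitute) argument, the claim that the slope subalgebras generate $\CA^\pm$ is unsupported, and with it the rest of your outline does go through as you describe.
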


\noindent When $Q$ is a cyclic quiver, Theorem \ref{thm:intro} was proved in \cite{Thesis,Cyc}. The isomorphism \eqref{eqn:intro 1} is inspired by the one constructed by Burban-Schiffmann \cite{BS} in the elliptic Hall algebra (which is isomorphic to $\CA^+$ when $Q$ is the Jordan quiver, namely one vertex and one loop). Meanwhile, the product formula \eqref{eqn:intro 2} generalizes well-known formulas for $R$-matrices of finite and affine type quantum groups \cite{D,KR,KT,LS,LSS,R}. \\

\noindent In Section \ref{sec:shuffle}, we recall general facts about the shuffle algebra $\CA^+$. In Section \ref{sec:slope}, we define the slope subalgebras $\CB_{\bm}$ and prove Theorem \ref{thm:intro}. In Section \ref{sec:geometry}, we present connections (as well as conjectures and open questions) between our slope subalgebras and other concepts in the field such as Kac polynomials, cohomological and $K$-theoretic Hall algebras (particularly the connection between $\CB_{\b0}$ and the Lie algebra of BPS states studied in \cite{Dav, DM}) and the conjectural connection between our formulas \eqref{eqn:intro 1} and \eqref{eqn:intro 2} and the analogous formulas for quantum groups defined via geometric $R$-matrices \cite{AO,MO,O1,O2,OS} in the context of Nakajima quiver varieties. \\

\noindent It is likely that Theorem \ref{thm:intro} can be generalized to the case of quivers with potential, although working out all the details would probably be a very non-trivial and interesting task (see \cite{Pad} for the setting of such a generalization; note, however, that Theorem 6.3 of \loccit provides an isomorphism of a different nature from \eqref{eqn:intro 1}). \\

\noindent I would like to thank Andrei Okounkov, Olivier Schiffmann and Alexander Tsymbaliuk for many interesting discussions about $R$-matrices and much more. I gratefully acknowledge NSF grants DMS-$1760264$ and DMS-$1845034$, as well as support from the Alfred P.\ Sloan Foundation and the MIT Research Support Committee.\\

\section{The shuffle algebra of a (doubled) quiver} 
\label{sec:shuffle}

\medskip

\subsection{} A quiver is a finite oriented graph $Q$ with vertex set $I$ and edge set $E$; edge loops and multiple edges are allowed. We will work over the field:
$$
\BF = \BQ(q,t_e)_{e \in E}
$$
We will write elements of $\nn$ as $\bn = (n_i \geq 0)_{i \in I}$ \footnote{Although non-standard, it will be convenient for us to include 0 in the set $\BN$}. For such an $\bn$, let us define:
$$
\bn! = \prod_{i\in I} n_i!
$$
Consider the vector space:
\begin{equation}
\label{eqn:big shuffle}
\CV = \bigoplus_{\bn = (n_i)_{i \in I} \in \nn} \BF[\dots,z^{\pm 1}_{i1},\dots,z^{\pm 1}_{in_i},\dots]^{\sym}
\end{equation}
where ``$\sym$" refers to Laurent polynomials which are symmetric in $z_{i1},\dots,z_{in_i}$ for each $i \in I$ separately. We will make $\CV$ into an associative algebra using the following \textbf{shuffle product} (it originated with a construction of \cite{FO} involving elliptic algebras, though the setting at hand is closer to the one studied in \cite{E,FHHSY,SV} and other works):
\begin{equation}
\label{eqn:shuf prod}
F(\dots,z_{i1},\dots,z_{in_i},\dots) * F'(\dots,z_{i1},\dots,z_{in'_i},\dots) = 
\end{equation}
$$
\text{Sym} \left[\frac {F(\dots,z_{i1},\dots,z_{in_i},\dots) F'(\dots,z_{i,n_i+1},\dots,z_{i,n_i+n_i'},\dots)}{\bn! \cdot \bn'!}\mathop{\prod^{i,j \in I}_{1 \leq a \leq n_i}}_{n_{j} < b \leq n_{j}+n'_{j}} \zeta_{ij} \left( \frac {z_{ia}}{z_{jb}} \right) \right]
$$
where ``Sym" denotes symmetrization with respect to the variables $z_{i1},\dots,z_{i,n_i+n_i'}$ for each $i \in I$ separately, and for any $i,j \in I$ we define the following function:
\begin{equation}
\label{eqn:def zeta}
\zeta_{ij}(x) = \left(\frac {1-xq^{-1}}{1-x} \right)^{\delta_j^i} \prod_{e = \oij \in E} \left(\frac 1{t_e} - x \right) \prod_{e = \oji \in E} \left(1 - \frac {t_e}{qx} \right)
\end{equation}
Note that although the right-hand side of \eqref{eqn:shuf prod} seemingly has simple poles at $z_{ia} - z_{ib}$ for all $i \in I$ and all $a < b$, these poles vanish when taking the symmetrization, as the orders of such poles in a symmetric rational function must be even. \\


\begin{definition}
\label{def:shuffle}

The \textbf{shuffle algebra} is defined as the subset:
$$
\CS \subset \CV
$$
of Laurent polynomials $F(\dots, z_{i1}, \dots, z_{in_i}, \dots)$ that satisfy the ``wheel conditions":
\begin{equation}
\label{eqn:wheel}
F \Big|_{z_{ia} = \frac {qz_{jb}}{t_e} = q z_{ic}} = F \Big|_{z_{ja} = t_e z_{ib} = q z_{jc} } = 0
\end{equation}
for all edges $e = \oij$ and all $a \neq c$ (and further $a \neq b \neq c$ if $i = j$). \\

\end{definition}

\noindent It is easy to show that $\CS$ is a subalgebra of $\CV$, i.e. that it is closed under the shuffle product (see \cite[Proposition 2.1]{Shuf} for the proof in the particular case of the Jordan quiver, which already incorporates all the ideas that one needs in the general case). \\

\begin{theorem}
\label{thm:generate}

(\cite[Theorem 1.2]{N}) As an $\BF$-algebra, $\CS$ is generated by $\{z_{i1}^d\}_{i \in I}^{d \in \BZ}$. \\

\end{theorem}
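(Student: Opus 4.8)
The plan is to show that the subalgebra $\CS' \subseteq \CS$ generated by the degree pieces $\BF[z_{i1}^{\pm 1}]$ (one-variable Laurent polynomials in a single variable $z_{i1}$, for each $i \in I$) exhausts all of $\CS$. Since the one-variable pieces of $\CS$ are precisely $\BF[z_{i1}^{\pm 1}]$ — there are no wheel conditions to impose when only one variable is present — it suffices to prove that $\CS' = \CS$ in each multidegree $\bn \in \nn$. I would argue by induction on $|\bn| = \sum_i n_i$, the base case $|\bn| \le 1$ being trivial. For the inductive step, fix $F \in \CS$ of multidegree $\bn$ with $|\bn| \ge 2$, pick a vertex $i$ with $n_i \ge 1$, and consider the ``leading term'' of $F$ in the variable $z_{in_i}$ — more precisely, extract from $F$ the behavior as $z_{in_i} \to \infty$ (and as $z_{in_i} \to 0$), recording the top and bottom powers. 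This produces a symmetric Laurent polynomial $G$ in the remaining variables, of multidegree $\bn - 1_i$, which one checks still satisfies the wheel conditions, hence lies in $\CS$, hence by induction in $\CS'$.

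The key mechanism is a \emph{factorization/reconstruction} statement: given $F \in \CS$, I want to show that $F$ can be written as a combination of shuffle products $G * z_{i1}^d$ (and $z_{i1}^d * G$) with $G \in \CS$ of strictly smaller total degree, up to terms which are ``more degenerate'' and can be handled separately. Concretely, the shuffle product $G * z_{i1}^d$ equals $\mathrm{Sym}$ of
$$
\frac{G(\dots) \, z_{in_i}^d}{\dots} \prod_{j,a} \zeta_{ij}\!\left(\tfrac{z_{ia}}{z_{in_i}}\right)^{\mp 1}\!,
$$
so its leading behavior in $z_{in_i}$ is governed by $d$ together with the degrees of the $\zeta$-factors; by choosing $d$ appropriately and subtracting, one lowers the ``width'' of $F$ in the $z_{in_i}$-direction. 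Iterating, one reduces $F$ to a Laurent polynomial independent of $z_{in_i}$, i.e.\ (after symmetrizing) to something of smaller multidegree, closing the induction. The wheel conditions enter crucially here: they are exactly what guarantees that the residues/leading coefficients one extracts are themselves symmetric polynomials satisfying wheel conditions (the vanishing \eqref{eqn:wheel} forces the would-be poles at $z_{ia} = q^{\pm 1} z_{ib}$ in the reconstruction to cancel), so the induction stays inside $\CS$.

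The main obstacle, and the technical heart of the argument, is controlling the \emph{combinatorics of the leading terms} of iterated shuffle products: one must show that the span of leading terms of products $\CS_{\bn - 1_i} * \BF[z_{i1}^{\pm 1}]$ is all of $\CS_{\bn}$, which amounts to an invertibility statement for a certain transition matrix between the ``natural'' basis of $\CS_\bn$ and the basis coming from shuffle products. For the Jordan quiver this is classical (it underlies \cite{FHHSY,SV}), and the general quiver case is handled in \cite{N}; I would cite \cite[Theorem 1.2]{N} for this and, in a self-contained writeup, reduce to a rank computation using the explicit form of $\zeta_{ij}$ in \eqref{eqn:def zeta}, separating the analysis at a loop vertex (where the $\bigl(\tfrac{1-xq^{-1}}{1-x}\bigr)$ factor contributes) from a non-loop vertex. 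Since the statement is quoted from \cite{N}, for the purposes of this paper it suffices to invoke that reference.
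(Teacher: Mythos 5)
The paper offers no proof of this statement at all: it is imported verbatim from \cite[Theorem 1.2]{N}, and your closing sentence --- that for the purposes of this paper it suffices to invoke that reference --- is exactly what the paper does. In that sense your proposal matches the paper's treatment.

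That said, you should be careful not to present the preceding sketch as if it were close to a proof. The step you flag as ``the technical heart'' --- that the leading terms of products $\CS_{\bn - 1_i} * \BF[z_{i1}^{\pm 1}]$ span $\CS_{\bn}$, i.e.\ the invertibility of your transition matrix --- is not a lemma one reduces to; it \emph{is} the theorem, and it is genuinely hard. The argument in \cite{N} does not run by extracting the top coefficient in a single variable $z_{in_i}$ and inducting on $|\bn|$: it goes through the identification of $\CS$ with the localized $K$-theoretic Hall algebra and a delicate induction on leading terms with respect to multi-variable specializations (organized by combinatorial data attached to the quiver), where the wheel conditions are used to constrain which specializations can be nonzero. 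Two specific points in your sketch that would need real work: (i) it is not automatic that the ``leading coefficient'' of $F$ in one variable satisfies the wheel conditions \eqref{eqn:wheel}, since those conditions involve three variables at once and taking a limit in one of them can destroy the vanishing locus you need; (ii) even granting that, subtracting a single product $G * z_{i1}^d$ need not strictly lower any reasonable complexity measure, because the symmetrization in \eqref{eqn:shuf prod} mixes the distinguished variable back in with all the others. So the sketch is a plausible heuristic for why the theorem should be true, but the citation is doing all the work.
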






\subsection{} The algebra $\CS$ is $\nn \times \BZ$ graded via:
\begin{equation}
\label{eqn:degree plus}
\deg F = (\bn, d)
\end{equation}
if $F$ lies in the $\bn$-th direct summand of \eqref{eqn:big shuffle}, and has homogeneous degree $d$. The components of the degree will be called ``horizontal" and ``vertical", respectively:
\begin{equation}
\label{eqn:hor and vert}
\hdeg F = \bn, \qquad \vdeg F = d
\end{equation}
We will denote the graded pieces of the shuffle algebra by:
\begin{equation}
\label{eqn:graded pieces plus}
\CS = \bigoplus_{\bn \in \nn} \CS_{\bn} = \bigoplus_{(\bn,d) \in \nn \times \BZ} \CS_{\bn,d}
\end{equation}
For any $\bk \in \zz$, we have a \textbf{shift automorphism}:
\begin{equation}
\label{eqn:shift plus}
\CS \xrightarrow{\tau_{\bk}} \CS, \qquad F(\dots,z_{ia},\dots) \mapsto F(\dots,z_{ia},\dots) \prod_{i \in I, a \geq 1} z_{ia}^{k_i}
\end{equation}
The notions above also apply to the opposite algebra $\CS^{\op}$, although we make slightly different conventions. For one thing, we set the grading on $\CS^{\op}$ to:
\begin{equation}
\label{eqn:degree minus}
\deg G = (-\bn, d)
\end{equation}
if $G$ lies in the $\bn$-th direct summand of \eqref{eqn:big shuffle}, and has homogeneous degree $d$. As for the analogue of the shift automorphism \eqref{eqn:shift plus}, we make the following convention:
\begin{equation}
\label{eqn:shift minus}
\CS^{\op} \xrightarrow{\tau_{\bk}} \CS^{\op}, \qquad G(\dots,z_{ia},\dots) \mapsto G(\dots,z_{ia},\dots) \prod_{i \in I, a \geq 1} z_{ia}^{-k_i}
\end{equation}

\subsection{} 
\label{sub:algebra}

We will now recall the well-known Hopf algebra structure on the shuffle algebra, see \cite{Shuf, SV, YZ} for incarnations of this construction in settings such as ours. As usually, the Hopf algebra is actually the double extended shuffle algebra, namely: \footnote{It is possible to enlarge $\CA$ by introducing a central element which measures the failure of the $h^+$'s and the $h^-$'s to commute, but we will not need it in the present paper}
\begin{equation}
\label{eqn:double}
\CA = \CS \otimes \BF [h_{i,\pm 0}^{\pm 1}, h_{i,\pm 1}, h_{i,\pm 2}, \dots ]_{i \in I} \otimes \CS^{\op} \Big / \text{relations \eqref{eqn:rel double 1}--\eqref{eqn:rel double 3}}
\end{equation}
Since the algebras $\CS$ and $\CS^{\op}$ are generated by:
$$
e_{i,d} = z_{i1}^d \in \CS \qquad \text{and} \qquad f_{i,d} = z_{i1}^d \in \CS^{\op}
$$
it suffices to present the defining relations, as well as the Hopf algebra structure, on the generators. More precisely, if we package the generators into formal series:
\begin{equation}
\label{eqn:formal series}
e_i(z) = \sum_{d\in \BZ} \frac {e_{i,d}}{z^d}, \qquad f_i(z) = \sum_{d\in \BZ} \frac {f_{i,d}}{z^d}, \qquad 
h^\pm_i(w) = \sum_{d = 0}^{\infty} \frac {h_{i,\pm d}}{w^{\pm d}}
\end{equation}
then we set:
\begin{align}
&e_i(z) h^\pm_j(w) = h^\pm_j(w) e_i(z) \frac {\zeta_{ij} \left( \frac z{w} \right)}{\zeta_{ji} \left( \frac {w}z \right)} \label{eqn:rel double 1} \\
&f_i(z) h^\pm_j(w) = h^\pm_j(w) f_i(z) \frac {\zeta_{ji} \left( \frac {w}z \right)}{\zeta_{ij} \left( \frac z{w} \right)} \label{eqn:rel double 2}
\end{align}
(the rational functions in the right-hand sides of the expressions above are expanded as power series in $w^{\mp 1}$) and:
\begin{equation}
\label{eqn:rel double 3}
[e_{i,d}, f_{j,k}] =  \delta_j^i \cdot \gamma_i \begin{cases} - h_{i,d+k} &\text{if } d+k > 0 \\ h_{i,-0} - h_{i,+0} &\text{if } d+k = 0 \\ h_{i,d+k} &\text{if } d+k<0 \end{cases}
\end{equation}
where:
\begin{equation}
\label{eqn:constant}
\gamma_i = \frac {\prod_{e = \oii} \left[ \left(\frac 1{t_e}-1 \right) \left(1 - \frac {t_e}q \right) \right]}{1 - \frac 1q}
\end{equation}
\footnote{This differs slightly from the conventions of \cite{N}, where the constant $\gamma_i$ did not appear in the analogue of \eqref{eqn:rel double 3}; this can be explained by simply rescaling the generators $f_{i,d}$ by $\gamma_i$} It is easy to see that the grading \eqref{eqn:degree plus}, \eqref{eqn:degree minus} extends to the whole of $\CA$, by setting: 
$$
\deg h_{i,\pm d} = (0,\pm d)
$$
for all $i \in I$, $d \geq 0$. The shift automorphisms \eqref{eqn:shift plus}, \eqref{eqn:shift minus} extend to automorphisms: 
\begin{equation}
\label{eqn:shift}
\tau_{\bk} : \CA \rightarrow \CA
\end{equation}
by setting $\tau_{\bk}(h_{i,\pm d}) = h_{i,\pm d}$ for all $i\in I$ and $d \in \BN$. \\

\subsection{} 
\label{sub:cop}

To write down the (topological) coproduct on $\CA$, consider the subalgebras:
$$
\CA^+ = \CS \qquad \text{and} \qquad \CA^- = \CS^{\op}
$$
and the extended subalgebras:
\begin{align*}
&\CA^{\geq} = \CA^+ \otimes \BF [h_{i,+0}^{\pm 1}, h_{i,1}, h_{i,2}, \dots ]_{i \in I}\\
&\CA^{\leq} = \CA^- \otimes \BF [h_{i,-0}^{\pm 1}, h_{i,-1}, h_{i,-2}, \dots ]_{i \in I}
\end{align*}
The reason for these extended subalgebras is that $\CA^+$ (respectively $\CA^-$) does not admit a coproduct, but $\CA^{\geq}$ (respectively $\CA^{\leq}$) does, according to the formulas:
\begin{equation}
\label{eqn:cop 1}
\Delta \left(h^{\pm}_i(z) \right) = h^{\pm}_i(z) \otimes h^{\pm}_i(z)  
\end{equation}
\begin{equation}
\label{eqn:cop 2}
\Delta \left(e_i(z) \right) = e_i(z) \otimes 1 + h^+_i(z) \otimes e_i(z)  
\end{equation}
\begin{equation}
\label{eqn:cop 3}
\Delta \left(f_i(z) \right) = f_i(z) \otimes h^-_i(z) + 1 \otimes f_i(z)  
\end{equation}
There are unique antipode maps $S  : \CA^{\geq} \rightarrow \CA^{\geq}$ and $S  : \CA^{\leq} \rightarrow \CA^{\leq}$ which are determined by the topological coproducts above, and so we leave their computation to the interested reader (the antipode will not feature in the present paper). \\

\subsection{} 
\label{sub:double}

It is straightforward to show that the Hopf algebra structures on $\CA^{\geq}$ and $\CA^{\leq}$ defined above extend to the entire $\CA$. An alternative way to see this is to note that $\CA$ is the Drinfeld double of $\CA^{\geq}$ and $\CA^{\leq}$. Indeed, consider the Hopf pairing:
\begin{equation}
\label{eqn:hopf pairing}
\langle \cdot , \cdot \rangle : \CA^{\geq} \otimes \CA^{\leq} \longrightarrow \BF
\end{equation}
which is defined by the following formulas:
\begin{equation}
\label{eqn:pairing 1} 
\langle h^+_i(z), h^-_j(w)  \rangle = \frac {\zeta_{ij} \left(\frac zw \right)}{\zeta_{ji} \left(\frac wz \right)} 
\end{equation}
(the right-hand side is expanded as $|z| \gg |w|$) and:
\begin{equation}
\label{eqn:pairing 2} 
\langle e_{i,d}, f_{j,k} \rangle = \delta_j^i \gamma_i \delta_{d+k}^0
\end{equation}
All other pairings between the $e$'s, $f$'s and $h$'s vanish, from where we deduce that the pairing \eqref{eqn:hopf pairing} only pairs non-trivially elements of opposite degrees. From \eqref{eqn:pairing 1}--\eqref{eqn:pairing 2} one can then deduce the pairing on any elements by applying the properties:
\begin{align}
&\langle a,b_1b_2 \rangle = \langle \Delta(a), b_1 \otimes b_2 \rangle \label{eqn:bialg 1} \\
&\langle a_1a_2,b \rangle = \langle a_1 \otimes a_2, \Delta^{\op}(b) \rangle \label{eqn:bialg 2}
\end{align}
for all $a,a_1,a_2 \in \CA^{\geq}$ and $b,b_1,b_2 \in \CA^{\leq}$. We remark that the pairing also satisfies the following property with respect to the antipode:
$$
\langle S(a), S(b) \rangle = \langle a,b \rangle
$$
for all $a \in \CA^{\geq}$ and $b \in \CA^{\leq}$, but we will not need this fact in the present paper. The pairing \eqref{eqn:hopf pairing} was shown to be non-degenerate in \cite[Proposition 3.3]{N}, although this is also easily seen from formulas \eqref{eqn:pairing plus}--\eqref{eqn:pairing minus} below. Therefore, one can make the vector space:
\begin{equation}
\label{eqn:double}
\CA = \CA^{\geq} \otimes \CA^{\leq}
\end{equation}
into a Hopf algebra using the well-known Drinfeld double construction, as follows. First, one makes \eqref{eqn:double} into an algebra by requiring that $\CA^{\geq} = \CA^{\geq} \otimes 1 \subset \CA$ and  $\CA^{\leq} = 1 \otimes \CA^{\leq} \subset \CA$ be algebra homomorphisms, and the multiplication of elements coming from the two tensor factors in \eqref{eqn:double} be governed by the following relation:
\begin{equation}
\label{eqn:double formula}
a_1b_1 \langle a_2,b_2\rangle = b_2 a_2 \langle a_1,b_1\rangle 
\end{equation}
\footnote{In the formula above, we use Sweedler notation $\Delta(a) = a_1 \otimes a_2$ and $\Delta(b) = b_1 \otimes b_2$ for the coproduct, with the summation sign being implied} for any $a \in \CA^{\geq} \subset \CA$, $b \in \CA^{\leq} \subset \CA$. It is straightforward to show that the resulting algebra structure on $\CA$ of \eqref{eqn:double} matches the one introduced in Subsection \ref{sub:algebra}. As for the coalgebra structure and antipode on \eqref{eqn:double}, they are uniquely determined by the respective structures on the two tensor factors $\CA^{\geq}$ and $\CA^{\leq}$, and multiplicativity. \\

\subsection{}
\label{sub:r-matrix}

Since the Hopf algebra $\CA$ is a Drinfeld double, it has a universal $R$-matrix:
$$
\CR \in \CA^{\geq} \ \widehat{\otimes} \ \CA^{\leq} \subset \CA \ \widehat{\otimes} \ \CA
$$
(the completion is necessary because our coproduct is topological). Specifically, $\CR$ is the canonical tensor of the pairing \eqref{eqn:hopf pairing}, and it takes the form:
\begin{equation}
\label{eqn:univ r}
\CR = \CR' \cdot \Big[\text{a sum of products involving the }h_{i,\pm d} \Big]
\end{equation}
\footnote{For a survey of the formula above in the particular case of the Jordan quiver, we refer the reader to \cite{R-matrix}, where we recall the standard difficulties in properly defining the product in \eqref{eqn:univ r}} where $\CR'$ is the canonical tensor of the restriction of the pairing \eqref{eqn:hopf pairing} to:
\begin{equation}
\label{eqn:pairing positive}
\langle \cdot , \cdot \rangle : \CA^+ \otimes \CA^- \longrightarrow \BF
\end{equation}
In other words, we have: 
\begin{equation}
\label{eqn:universal r-matrix}
\CR' = 1 + \sum_{i \in I} \sum_{d \in \BZ} \frac {e_{i,d} \otimes f_{i,-d}}{\gamma_i} + \dots
\end{equation}
where the ellipsis denotes terms which are quadratic, cubic, etc, in the $e$'s and the $f$'s. In what follows, we will construct a factorization of $\CR'$ as an infinite product of $R$-matrices arising from ``slope subalgebras", generalizing the treatment of cyclic quivers in \cite{Thesis, Cyc}. Such factorizations are inspired by the analogous constructions pertaining to quantum groups from \cite{D, KR, KT, LS, LSS, R} (which coincide with our construction for simply laced quantum affine groups) and with the constructions of geometric $R$-matrices from \cite{AO, MO, O1, O2, OS} (see Section \ref{sec:geometry}). \\

\subsection{}

In what follows, we will need to present the bialgebra structure of Subsections \ref{sub:algebra}, \ref{sub:cop}, \ref{sub:double} in shuffle algebra language. More precisely, Theorem \ref{thm:generate} implies that formulas \eqref{eqn:rel double 1}--\eqref{eqn:rel double 2}, \eqref{eqn:cop 2}--\eqref{eqn:cop 3} and \eqref{eqn:pairing 2} extend from the generators $e_{i,d}$ (respectively $f_{i,d}$) to the entire shuffle algebra $\CS = \CA^+$ (respectively $\CS^{\op} = \CA^-$). All the statements in the present Subsection are straighforward, and left as exercises to the interested reader (equivalently, they were proved in Sections 3 and 4 of \cite{N}). Formulas \eqref{eqn:rel double 1}--\eqref{eqn:rel double 2} imply that:
\begin{align}
&F h^\pm_j(w) = h^\pm_j(w) F \prod^{i\in I}_{1 \leq a \leq n_i} \frac {\zeta_{ij} \left( \frac {z_{ia}}{w} \right)}{\zeta_{ji} \left( \frac {w}{z_{ia}} \right)} \label{eqn:rel double 1 recast} \\
&G h^\pm_j(w) = h^\pm_j(w) G \prod^{i\in I}_{1 \leq a \leq n_i} \frac {\zeta_{ji} \left( \frac {w}{z_{ia}} \right)}{\zeta_{ij} \left( \frac {z_{ia}}{w} \right)} \label{eqn:rel double 2 recast}
\end{align}
for any $F \in \CS_{\bn}$ and any $G \in \CS^{\op}_{\bn}$ (the rational functions in the right-hand sides of the formulas above are expanded as power series in $w^{\mp 1}$). In particular, by extracting the coefficient of $w^0$ from the formulas above, we obtain the following:
\begin{align}
&F h_{j,+0}  = h_{j,+0} F \cdot \prod_{i \in I} \left(q^{\delta_j^i} \prod_{e = \oij} \frac 1{t_e} \prod_{e = \oji} \frac {t_e}q \right)^{n_i}\label{eqn:rel ext plus} \\
&G h_{j,-0}  = h_{j,-0} G \cdot \prod_{i \in I} \left(\frac 1{q^{\delta_j^i}} \prod_{e = \oij} \frac q{t_e} \prod_{e = \oji} t_e \right)^{n_i}\label{eqn:rel ext minus}
\end{align}
As for formulas \eqref{eqn:cop 2}--\eqref{eqn:cop 3}, they imply the following:
\begin{align}
&\Delta (F) = \sum_{\{0 \leq k_i \leq n_i\}_{i\in I}} \frac {\prod^{j \in I}_{k_j < b \leq n_j} h^+_j(z_{jb}) F(\dots, z_{i1},\dots , z_{ik_i} \otimes z_{i,k_i+1}, \dots, z_{in_i},\dots)}{\prod^{i \in I}_{1\leq a \leq k_i} \prod^{j \in I}_{k_j < b \leq n_j} \zeta_{ji} \left( \frac {z_{jb}}{z_{ia}} \right)} \label{eqn:cop plus} \\
&\Delta(G) = \sum_{\{0 \leq k_i \leq n_i\}_{i\in I}} \frac {G(\dots, z_{i1},\dots , z_{ik_i} \otimes z_{i,k_i+1}, \dots, z_{in_i},\dots) \prod^{i \in I}_{1 \leq a \leq k_i} h^-_i(z_{ia}) }{\prod^{i \in I}_{1\leq a \leq k_i} \prod^{i \in I}_{k_j < b \leq n_j} \zeta_{ij} \left( \frac {z_{ia}}{z_{jb}} \right)} \label{eqn:cop minus}
\end{align}
for any $F \in \CS_{\bn}$ and $G \in \CS^{\op}_{\bn}$. To make sense of the right-hand side of formulas \eqref{eqn:cop plus}--\eqref{eqn:cop minus}, we expand the denominator as a power series in the range $|z_{ia}| \ll |z_{jb}|$, and place all the powers of $z_{ia}$ to the left of the $\otimes$ sign and all the powers of $z_{jb}$ to the right of the $\otimes$ sign (for all $i,j \in I$, $1 \leq a \leq k_i$, $k_j < b \leq n_j$). \\

\noindent Finally, formulas \eqref{eqn:pairing 1}--\eqref{eqn:pairing 2} together with the defining properties \eqref{eqn:bialg 1}--\eqref{eqn:bialg 2} of a bialgebra pairing imply that (see formulas (3.2) and (3.30) of \cite{N}, respectively):
\begin{align}
&\langle F, f_{i_1,d_1} * \dots * f_{i_n,d_n} \rangle = \int_{|z_1| \ll \dots \ll |z_n|} \frac {z_1^{d_1} \dots z_n^{d_n} F(z_1,\dots,z_n)}{\prod_{1\leq a < b \leq n} \zeta_{i_ai_b} \left(\frac {z_a}{z_b} \right)} \prod_{a=1}^n \frac {dz_a}{2\pi i z_a}
\label{eqn:pairing plus} \\
&\langle e_{i_1,d_1} * \dots * e_{i_n,d_n}, G \rangle = \int_{|z_1| \gg \dots \gg |z_n|} \frac {z_1^{d_1} \dots z_n^{d_n} G(z_1,\dots,z_n)}{\prod_{1\leq a < b \leq n} \zeta_{i_bi_a} \left(\frac {z_b}{z_a} \right)} \prod_{a=1}^n \frac {dz_a}{2\pi i z_a} \label{eqn:pairing minus}
\end{align}
for any $F \in \CS$ (respectively $G \in \CS^{\op}$) and any $i_1,\dots,i_n \in I$, $d_1,\dots,d_n \in \BZ$ such that the shuffle elements being paired in any $\langle \cdot, \cdot \rangle$ of \eqref{eqn:pairing plus}--\eqref{eqn:pairing minus} have opposite degrees. In order for formula \eqref{eqn:pairing plus} to make sense, one needs to plug the variable $z_a$ into a variable of the form $z_{i_a \bullet}$ of $F$, where the choice of $\bullet$ does not matter due to the symmetry of $F$. The analogous remark applies to \eqref{eqn:pairing minus}. \\

\section{Slope subalgebras and factorizations of $R$-matrices}
\label{sec:slope}

\medskip

\subsection{}

We will consider $\nn \subset \zz \subset \qq$. Recall that $\nn$ includes the element $\b0 = (0,\dots,0)$, according to our convention that $0 \in \BN$, as well as the following elements:
$$
\bs^i = (\underbrace{0,\dots,0,1,0,\dots,0}_{1 \text{ on }i\text{-th position}})
$$
$\forall i \in I$. We will consider two operations on $\nn \subset \zz \subset \qq$, namely the dot product:
\begin{equation}
\label{eqn:dot}
\bk \cdot \bl = \sum_{i \in I} k_i l_i
\end{equation}
and the bilinear form:
\begin{equation}
\label{eqn:form}
\langle \bk , \bl \rangle = \sum_{i,j \in I} k_i l_j \#_{\oij}
\end{equation}
for any $\bk = (k_i)_{i \in I}$ and $\bl = (l_i)_{i \in I}$. In the formula above, we write:
\begin{equation}
\label{eqn:number of arrows}
\#_{\oij} = \text{the number of arrows }\oij \text { in } Q
\end{equation}
For any $\bk = (k_i)_{i\in I}$ and $\bn = (n_i)_{i \in I}$, we will write: 
\begin{equation}
\label{eqn:ineq deg}
\b0 \leq \bk \leq \bn 
\end{equation}
if $0 \leq k_i \leq n_i$ for all $i \in I$ (we will use the notation $\b0 < \bk < \bn$ if we wish to further indicate that $\bk \neq \b0$ and $\bk \neq \bn$). Finally, let:
\begin{equation}
\label{eqn:length}
|\bn| = \sum_{i\in I} n_i
\end{equation}

\subsection{}

The following is the key notion of the current Section. \\

\begin{definition}
\label{def:slope}

Let $\bm \in \qq$. We will say that $F \in \CA^+$ has \textbf{slope} $\leq \bm$ if:
\begin{equation}
\label{eqn:slope plus} 
\lim_{\xi \rightarrow \infty} \frac {F(\dots, \xi z_{i1},\dots, \xi z_{ik_i}, z_{i,k_i+1},\dots, z_{in_i},\dots)}{\xi^{\bm \cdot \bk + \langle \bk,\bn - \bk \rangle}}
\end{equation}
is finite for all $\b0 \leq \bk \leq \bn$. Similarly, we will say that $G \in \CA^-$ has \textbf{slope} $\leq \bm$ if:
\begin{equation}
\label{eqn:slope minus} 
\lim_{\xi \rightarrow 0} \frac {G(\dots, \xi z_{i1},\dots, \xi z_{ik_i}, z_{i,k_i+1},\dots, z_{in_i},\dots)}{\xi^{- \bm \cdot \bk - \langle \bn - \bk, \bk \rangle}} 
\end{equation}
is finite for all $\b0 \leq \bk \leq \bn$. \\

\end{definition}

\noindent We will also say that $F \in \CA^+$, respectively $G \in \CA^-$, has \textbf{naive slope} $\leq \bm$ if:
\begin{align}
&\vdeg F \leq \bm \cdot \hdeg F \label{eqn:naive slope plus} \\
&\vdeg G \geq \bm \cdot \hdeg G \label{eqn:naive slope minus}
\end{align}
The $\bk = \bn$ case of formulas \eqref{eqn:slope plus}--\eqref{eqn:slope minus} shows that having slope $\leq \bm$ implies having naive slope $\leq \bm$. This fact can also be seen as a particular case of the following. \\

\begin{proposition}
\label{prop:cop slope}

An element $F \in \CA^+$ has slope $\leq \bm$ if and only if: 
\begin{equation}
\label{eqn:cop slope plus}
\Delta(F) = (\text{anything}) \otimes (\text{naive slope} \leq \bm) 
\end{equation}
Similarly, an element $G \in \CA^-$ has slope $\leq \bm$ if and only if: 
\begin{equation}
\label{eqn:cop slope minus}
\Delta(G) = (\text{naive slope} \leq \bm) \otimes (\text{anything})\end{equation}
The meaning of the RHS of \eqref{eqn:cop slope plus}/\eqref{eqn:cop slope minus} is that $\Delta(F)$/$\Delta(G)$ is an infinite sum of tensors, all of whose second/first factors have naive slope $\leq \bm$. Moreover, the statements above would remain true if we replaced ``naive slope" by ``slope". \\

\end{proposition}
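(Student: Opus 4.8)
\noindent The plan is to read everything off the explicit coproduct formulas \eqref{eqn:cop plus}--\eqref{eqn:cop minus} by tracking vertical degrees. I will treat $\CA^+$; the case of $\CA^-$ is entirely analogous, working with the first rather than the second tensor factor of \eqref{eqn:cop minus}, with the limit $\xi \to \infty$ replaced by $\xi \to 0$ (equivalently, top degrees by bottom degrees), and with the $x \to \infty$ asymptotics of the functions $\zeta_{ij}$ replaced by their $x \to 0$ asymptotics. Fix $F \in \CS_{\bn}$. For $\b0 \leq \bk' \leq \bn$, let $\Delta_{\bk'}(F)$ be the part of the sum \eqref{eqn:cop plus} corresponding to the splitting whose first tensor factor involves $z_{i1},\dots,z_{ik'_i}$; then $\Delta_{\bk'}(F) \in \CA^\geq \otimes \CA^+$ with second tensor factor of horizontal degree $\bn - \bk'$, since in \eqref{eqn:cop plus} the series $h^+_j(z_{jb}) = \sum_{c \geq 0} h_{j,c} z_{jb}^{-c}$ contribute their coefficients $h_{j,c}$ to the first tensor factor, only the powers $z_{jb}^{-c}$ remaining in the second (compatibly with \eqref{eqn:cop 2}). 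Since the second tensor factors of the various $\Delta_{\bk'}(F)$ sit in pairwise distinct horizontal degrees, the right-hand side of \eqref{eqn:cop slope plus} holds if and only if, for every $\bk'$, no second tensor factor occurring in $\Delta_{\bk'}(F)$ has vertical degree exceeding $\bm \cdot (\bn - \bk')$.

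\medskip

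\noindent The crux is the following degree computation. Set $\bk := \bn - \bk'$, and let $D_{\bk}(F)$ denote the top total degree of $F$ in any choice of $k_i$ of the variables of color $i$ — well defined by the symmetry of $F$, and equal to the top power of $\xi$ in the numerator of \eqref{eqn:slope plus}. I claim that the largest vertical degree of a second tensor factor of $\Delta_{\bk'}(F)$ equals $D_{\bk}(F) - \langle \bk, \bn - \bk \rangle$. Indeed, after substituting $h^+_j(z_{jb}) = \sum_{c \geq 0} h_{j,c} z_{jb}^{-c}$ in \eqref{eqn:cop plus}, every $h_{j,c}$ passes to the first factor while the compensating factor $z_{jb}^{-c}$ (with $c \geq 0$) stays in the second, so the largest vertical degree of the second factor is attained by the terms with all $c = 0$, and it equals the top total degree in the variables $z_{jb}$ of the expansion, in the range $|z_{ia}| \ll |z_{jb}|$, of $F(\dots)\big/\prod \zeta_{ji}(z_{jb}/z_{ia})$. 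The numerator contributes $D_{\bk}(F)$; and since $\zeta_{ji}(x)$ equals a nonzero constant times $x^{\#_{\oji}}$ plus lower-order terms as $x \to \infty$, the denominator contributes $-\sum_{i,j \in I} k'_i(n_j - k'_j)\#_{\oji} = -\langle \bk, \bn - \bk \rangle$, the last identity being exactly the definition \eqref{eqn:form} of the bilinear form; as the relevant leading terms do not cancel, the claim follows. By the previous paragraph, the right-hand side of \eqref{eqn:cop slope plus} is therefore equivalent to the inequalities $D_{\bk}(F) \leq \bm \cdot \bk + \langle \bk, \bn - \bk \rangle$ for all $\b0 \leq \bk \leq \bn$, and this is precisely the statement that $F$ has slope $\leq \bm$, since the $\bk$-th limit in \eqref{eqn:slope plus} is finite exactly when the top power of $\xi$ in its numerator does not exceed the exponent in its denominator. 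This settles the proposition with ``naive slope''.

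\medskip

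\noindent For the version with ``slope'', note that slope $\leq \bm$ implies naive slope $\leq \bm$, so only the forward implication needs proof. Suppose $F$ has slope $\leq \bm$, and write $\Delta(F) = \sum_\alpha e_\alpha \otimes C_\alpha$ with $\{e_\alpha\}$ a homogeneous basis of $\CA^\geq$ and $C_\alpha \in \CA^+$; by what was just shown, each $C_\alpha$ has naive slope $\leq \bm$, and it remains to upgrade this to slope $\leq \bm$. Applying the ``naive slope'' version to $C_\alpha \in \CA^+$, it suffices to prove that every second tensor factor of $\Delta(C_\alpha)$ has naive slope $\leq \bm$. But coassociativity yields $\sum_\alpha e_\alpha \otimes \Delta(C_\alpha) = (\textrm{id} \otimes \Delta)\Delta(F) = (\Delta \otimes \textrm{id})\Delta(F) = \sum_\alpha \Delta(e_\alpha) \otimes C_\alpha$, and the third tensor factors on the right are the $C_\alpha$, all of naive slope $\leq \bm$; since $\{e_\alpha\}$ is a basis, every second tensor factor of every $\Delta(C_\alpha)$ has naive slope $\leq \bm$, as required. (All infinite sums are to be read degree by degree, which is harmless: the graded pieces of the tensor factors carrying the slope condition are finite-dimensional, and the vertical degrees of the second tensor factors of $\Delta(F)$ are bounded above.) I expect the degree computation of the middle paragraph to be the only delicate point — in particular, checking that the $h$-factors in the coproduct do not raise the vertical degree of the second tensor factor, and that the contribution of the functions $\zeta_{ji}$ is governed exactly by the form $\langle \cdot, \cdot \rangle$ of Definition \ref{def:slope}; granting this, what remains is a matching of exponents together with the formal coassociativity argument.
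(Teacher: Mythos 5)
Your proof is correct and follows essentially the same route as the paper's: an explicit degree count in the coproduct formula \eqref{eqn:cop plus}, using that the $h$-series only lower the vertical degree of the second tensor factor and that the $\zeta_{ji}$-expansion shifts it by exactly $-\langle \bn-\bk',\bk'\rangle$ at leading order, followed by the same coassociativity argument to upgrade ``naive slope'' to ``slope''. The only (cosmetic) difference is that you package the monomial-by-monomial bookkeeping of the paper's expansion into the quantity $D_{\bk}(F)$; your parenthetical ``the relevant leading terms do not cancel'' is exactly the point the paper also asserts without further elaboration, and it holds because the exponent shifts are uniform across the leading monomials of $F$.
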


\begin{proof} Let us prove the statements pertaining to $F$, and leave the analogous case of $G$ as an exercise to the interested reader. We will write:
$$
F = \sum_{\{d_{ia}\}^{i \in I}_{1\leq a \leq n_i}} \text{coefficient} \cdot \prod^{i\in I}_{1 \leq a \leq n_i} z_{ia}^{d_{ia}}
$$
for various coefficients. Note that:
$$
\zeta_{ji} \left(\frac 1x \right)^{-1} \in x^{\#_{\oji}} \BF[[x]]^\times
$$
Then as a consequence of \eqref{eqn:cop plus}, we have:
\begin{equation}
\label{eqn:cop proof}
\Delta(F) = \sum_{\b0 \leq \bk \leq \bn} \sum_{\{d_{ia}\}^{i \in I}_{1\leq a \leq n_i}} \mathop{\sum_{\{p_{jb} \geq 0\}^{j \in I}_{k_j < b \leq n_j}}}_{\{e^{ia}_{jb} \geq \#_{\oji}\}^{i,j \in I}_{1\leq a \leq k_i,  k_j < b \leq n_j}} \text{coefficient} \cdot
\end{equation}
$$
\prod^{j \in I}_{k_j < b \leq n_j} h_{j,+p_{jb}} \prod^{i\in I}_{1 \leq a \leq k_i} z_{ia}^{d_{ia}+\sum^{j \in I}_{k_j < b \leq n_j} e^{ia}_{jb}} \otimes \underbrace{\prod^{j\in I}_{k_j < b \leq n_j} z_{jb}^{d_{jb} - p_{jb} - \sum^{i \in I}_{1 \leq a \leq k_i} e^{ia}_{jb}}}_{\text{call this }F_2}
$$
The homogeneous degree of any second tensor factor in the formula above satisfies:
$$
\vdeg F_2 \leq \sum^{j \in I}_{k_j < b \leq n_j} \left( d_{jb} - \sum^{i \in I}_{1 \leq a \leq k_i} \#_{\oji} \right) = \sum^{j \in I}_{k_j < b \leq n_j} d_{jb} - \left \langle \bn - \bk, \bk \right \rangle
$$
By the assumption \eqref{eqn:slope plus}, the right-hand side of the expression above is:
$$
\leq \bm \cdot (\bn - \bk) = \bm \cdot \hdeg F_2
$$
which implies that $F_2$ has naive slope $\leq \bm$. Conversely, the terms $F_2$ in \eqref{eqn:cop proof} with maximal naive slope are the ones corresponding to $p_{jb} = 0$ and $e^{ia}_{jb} = \#_{\oji}$. If these $F_2$'s have naive slope $\leq \bm$, then the chain of inequalities above implies precisely:
$$
\sum^{j \in I}_{k_j < b \leq n_j} d_{jb} \leq \bm \cdot (\bn-\bk) +\left \langle \bn - \bk, \bk \right \rangle
$$
Since this holds for all $\b0 \leq \bk \leq \bn$, we obtain precisely \eqref{eqn:slope plus}. \\

\noindent It remains to show that we can replace the weaker notion of ``naive slope" by the stronger ``slope" in \eqref{eqn:cop slope plus}. To this end, we will explicitly show that if we write:
$$
\Delta(F) = \sum_{s \in S} F_{1,s} \otimes F_{2,s}
$$
where $\{F_{1,s}\}_{s\in S}$ is an arbitrary linear basis of $\CA^{\geq}$, then every $F_{2,s}$ has slope $\leq \bm$. The key to proving this fact is the coassociativity of the coproduct:
$$
(\text{Id} \otimes \Delta) \circ \Delta(F) = (\Delta \otimes \text{Id}) \circ \Delta(F)
$$
The left-hand side of the expression above is precisely:
$$
\sum_{s \in S} F_{1,s} \otimes \Delta(F_{2,s})
$$
while the right-hand side is of the form:
$$
(\text{anything}) \otimes (\text{anything}) \otimes (\text{naive slope} \leq \bm)
$$
by \eqref{eqn:cop slope plus}. For any given $s \in S$, identifying the coefficients of $F_{1,s} \otimes - \otimes -$ in the two expressions above implies that $\Delta(F_{2,s}) = (\text{anything}) \otimes (\text{naive slope} \leq \bm)$. By \eqref{eqn:cop slope plus}, this precisely means that $F_{2,s}$ has slope $\leq \bm$, as we needed to prove. 

\end{proof} 

\noindent Let us denote the subspaces of shuffle elements of slope $\leq \bm$ by:
\begin{equation}
\label{eqn:slope subalgebras}
\CA^\pm_{\leq \bm} \subset \CA^\pm
\end{equation}
Proposition \ref{prop:cop slope} and the multiplicativity of $\Delta$ show that $\CA^\pm_{\leq \bm}$ are algebras.  \\

\subsection{} It is easy to see that the graded pieces of $\CA^\pm_{\leq \bm}$, namely:
$$
\CA_{\leq \bm|\pm \bn, \pm d} = \CA_{\pm \bn, \pm d} \cap \CA^\pm_{\leq \bm} 
$$
are finite-dimensional for any $(\bn, d) \in \nn \times \BZ$. This is because \eqref{eqn:slope plus}/\eqref{eqn:slope minus} impose upper/lower bounds on the exponents of the variables that make up the Laurent polynomials $F$/$G$. If we also fix the total homogeneous degree of such a polynomial, then there are finitely many choices for the monomials which make up $F$/$G$. \\

\begin{definition}
\label{def:subalg}

For any $\bm \in \qq$, we will write:
\begin{equation}
\label{eqn:sub b}
\CB^\pm_{\bm} \subset \CA^\pm
\end{equation}
for the subalgebras consisting of elements of slope $\leq \bm$ and naive slope $= \bm$ \footnote{Having naive slope $ = \bm$ means having equality in \eqref{eqn:naive slope plus}--\eqref{eqn:naive slope minus}. This terminology is slightly ambiguous, as there may be infinitely many values of $\bm$ for which equality holds, but in what follows this ambiguity will be clarified by the context}. \\

\end{definition}

\noindent We will denote the graded pieces of $\CB^\pm_{\bm}$ by:
\begin{equation}
\label{eqn:b graded pieces}
\CB^\pm_{\bm} = \bigoplus_{\bn \in \nn} \CB_{\bm|\pm \bn}
\end{equation}
where $\CB_{\bm|\pm \bn} = \CA_{\leq \bm|\pm \bn, \pm \bm \cdot \bn}$. If $\bm \cdot \bn \notin \BZ$ for some $\bn \in \nn$, the respective direct summand in \eqref{eqn:b graded pieces} is zero. As for the non-zero direct summands, they are all finite-dimensional, as was explained in the beginning of the current Subsection. \\

\subsection{} 
\label{sub:delta m}

We can make the algebras $\CB_{\bm}^\pm$ into Hopf algebras if we first extend them:
\begin{align}
&\CB_{\bm}^{\geq} = \CB^+_{\bm} \otimes \BF \left[ h^{\pm 1}_{i,+0}  \right]_{i \in I} \Big/ \text{relation \eqref{eqn:rel ext plus}} \label{eqn:ext plus} \\
&\CB_{\bm}^{\leq} = \CB^-_{\bm} \otimes \BF \left[ h^{\pm 1}_{i,-0}  \right]_{i \in I} \Big/ \text{relation \eqref{eqn:rel ext minus}} \label{eqn:ext minus}
\end{align}
There is a coproduct $\Delta_{\bm}$ on the subalgebras \eqref{eqn:ext plus}--\eqref{eqn:ext minus}, determined by:
$$
\Delta_{\bm}(h_{i,\pm 0}) = h_{i, \pm 0} \otimes h_{i,\pm 0}
$$
and the following formulas for any $F \in \CB_{\bm|\bn}$ and $G \in \CB_{\bm|-\bn}$:
\begin{align}
&\Delta_{\bm}(F) = \sum_{\b0 \leq \bk \leq \bn} \lim_{\xi \rightarrow \infty} \frac {h_{\bn-\bk} F(\dots, z_{i1},\dots, z_{ik_i} \otimes \xi z_{i,k_i+1},\dots, \xi z_{in_i},\dots)}{\xi^{\bm \cdot (\bn - \bk)} \cdot \text{lead} \left[ \prod^{i \in I}_{1\leq a \leq k_i} \prod^{j \in I}_{k_j < b \leq n_j} \zeta_{ji} \left( \frac {\xi z_{jb}}{z_{ia}} \right) \right]} \label{eqn:cop b plus} \\
&\Delta_{\bm}(G) = \sum_{\b0 \leq \bk \leq \bn} \lim_{\xi \rightarrow 0} \frac {G(\dots, \xi z_{i1},\dots, \xi z_{ik_i} \otimes z_{i,k_i+1},\dots, z_{in_i},\dots)h_{-\bk}}{\xi^{- \bm \cdot \bk} \cdot \text{lead} \left[ \prod^{i \in I}_{1\leq a \leq k_i} \prod^{j \in I}_{k_j < b \leq n_j} \zeta_{ij} \left( \frac {\xi z_{ia}}{z_{jb}} \right) \right]} \label{eqn:cop b minus}
\end{align}
where ``lead$[\dots]$" refers to the leading order term in $\xi$ of the expression marked by the ellipsis (expanded as $\xi \rightarrow \infty$ or as $\xi \rightarrow 0$, depending on the situation), and:
\begin{equation}
\label{eqn:cartan}
h_{\pm \bn} = \prod_{i\in I} h_{i,\pm 0}^{n_i}
\end{equation}
for all $\bn \in \nn$. By its very definition, $\Delta_{\bm}$ consists of the leading naive slope terms in formulas \eqref{eqn:cop plus}--\eqref{eqn:cop minus}, in the sense that:
\begin{align*}
&\Delta_{\bm}(F) = \text{component of } \Delta(F) \text{ in }  \bigoplus_{\bn = \bn_1 + \bn_2} h_{\bn_2} \CA_{\bn_1, \bm \cdot \bn_1} \otimes \CA_{\bn_2, \bm \cdot \bn_2} \\
&\Delta_{\bm}(G) = \text{component of } \Delta(G) \text{ in }  \bigoplus_{\bn = \bn_1 + \bn_2} \CA_{- \bn_1, - \bm \cdot \bn_1} \otimes \CA_{-\bn_2, -\bm \cdot \bn_2}  h_{- \bn_1} 
\end{align*}
for all $F \in \CB_{\bm}^{+}$, $G \in \CB_{\bm}^{-}$. Thus, the fact that $\Delta_{\bm}$ makes $\CB^{\geq}_{\bm}$ and $\CB^{\leq}_{\bm}$ into bialgebras is induced by the fact that $\Delta$ makes $\CA^{\geq}$ and $\CA^{\leq}$ into bialgebras. \\

\begin{proposition}
\label{prop:compatible 1}

The restriction of the pairing \eqref{eqn:hopf pairing} to:
\begin{equation}
\label{eqn:restricted pairing}
\langle \cdot, \cdot \rangle : \CB_{\bm}^{\geq} \otimes \CB_{\bm}^{\leq} \longrightarrow \BF
\end{equation}
satisfies properties \eqref{eqn:bialg 1}--\eqref{eqn:bialg 2} with respect to the coproduct $\Delta_{\bm}$. \\

\end{proposition}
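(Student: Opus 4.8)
The plan is to deduce the bialgebra-pairing identities for $(\CB_{\bm}^{\geq}, \CB_{\bm}^{\leq}, \Delta_{\bm})$ from the corresponding identities for $(\CA^{\geq}, \CA^{\leq}, \Delta)$, using the fact, emphasized at the end of Subsection \ref{sub:delta m}, that $\Delta_{\bm}$ is literally the ``top naive-slope'' component of $\Delta$. The starting observation is that the Hopf pairing \eqref{eqn:hopf pairing} pairs an element of $\CA_{\bn,d}$ non-trivially only with an element of $\CA_{-\bn,-d}$ (opposite degrees), and in particular $\langle \cdot,\cdot\rangle$ restricted to $\CB_{\bm|\bn} \otimes \CB_{\bm|-\bn'}$ vanishes unless $\bn = \bn'$, in which case both vertical degrees are forced to be $\pm\bm\cdot\bn$. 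I would first record this ``degree bookkeeping'' lemma carefully, together with the matching statement for the Cartan generators $h_{i,\pm 0}$ (which pair with each other through \eqref{eqn:pairing 1} specialized to the $w^0$-coefficient), so that throughout the argument one only ever needs to track the $(\bn, \bm\cdot\bn)$-graded pieces.

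Next I would verify \eqref{eqn:bialg 1} for the restricted pairing: fix $a \in \CB_{\bm}^{\geq}$ and $b_1, b_2 \in \CB_{\bm}^{\leq}$ (it suffices to take them homogeneous, and by multiplicativity it suffices to treat products of the generators $e_{i,d}, f_{i,d}, h_{i,\pm 0}$, although I prefer to argue directly with graded pieces). We know $\langle a, b_1 b_2\rangle = \langle \Delta(a), b_1\otimes b_2\rangle$ in $\CA$. Now expand $\Delta(a) = \Delta_{\bm}(a) + (\text{lower naive slope in the second factor})$ using Proposition \ref{prop:cop slope} — more precisely, writing $\Delta(a)$ as a sum of tensors $a_1 \otimes a_2$ with $a_1 \in h_{\bn_2}\CA_{\bn_1}$ and $a_2 \in \CA_{\bn_2}$, the terms outside $\Delta_{\bm}(a)$ are exactly those with $\vdeg a_2 < \bm\cdot\bn_2$ (and they are the only other possibility, since slope $\leq\bm$ forbids $\vdeg a_2 > \bm\cdot\bn_2$). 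But $b_2 \in \CB_{\bm}^{\leq}$ is concentrated in vertical degree $-\bm\cdot(\hdeg b_2)$, so by the degree bookkeeping lemma it can only pair non-trivially with $a_2$'s of vertical degree exactly $\bm\cdot(\hdeg b_2)$; hence every ``lower naive slope'' term dies in the pairing against $b_1\otimes b_2$, and $\langle \Delta(a), b_1\otimes b_2\rangle = \langle \Delta_{\bm}(a), b_1\otimes b_2\rangle$. This gives $\langle a, b_1 b_2\rangle = \langle \Delta_{\bm}(a), b_1\otimes b_2\rangle$, which is \eqref{eqn:bialg 1}. The identity \eqref{eqn:bialg 2} is proved symmetrically, expanding $\Delta^{\op}(b)$ and noting that the extra terms of $\Delta(b)$ beyond $\Delta_{\bm}(b)$ have first factor of vertical degree strictly greater than $-\bm\cdot(\hdeg(\cdot))$, hence are annihilated when paired against $a_1 \otimes a_2$ with $a_1, a_2 \in \CB_{\bm}^{\geq}$. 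One should also separately—but trivially—check the identities when some of the arguments are Cartan elements $h_{i,\pm 0}$, using $\Delta_{\bm}(h_{i,\pm 0}) = h_{i,\pm 0}\otimes h_{i,\pm 0}$ and \eqref{eqn:cop 1}.

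The main subtlety, and the step I would be most careful about, is the interplay with the $h^+_j(z)$-prefactors appearing in $\Delta(F)$ (formula \eqref{eqn:cop plus}) versus the pure Cartan monomials $h_{\bn_2}$ appearing in $\Delta_{\bm}(F)$ (formula \eqref{eqn:cop b plus}): one must check that in the pairing $\langle \Delta(F), b_1\otimes b_2\rangle$ the nonzero contributions only see the $w^0$-part $h_{j,+0}$ of each $h^+_j(z_{jb})$ — which is exactly what the vertical-degree constraint on $b_1$ forces — and that the normalization by $\mathrm{lead}[\prod \zeta_{ji}(\xi z_{jb}/z_{ia})]$ in \eqref{eqn:cop b plus} reproduces precisely the surviving coefficient of the denominator $\prod \zeta_{ji}(z_{jb}/z_{ia})$ in \eqref{eqn:cop plus} after this specialization. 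This is really a bookkeeping check matching the two descriptions of $\Delta_{\bm}$ given at the end of Subsection \ref{sub:delta m}, and once that dictionary is in place the pairing identities follow formally; I would present the $\xi\to\infty$ leading-term description as the bridge, since it makes the ``top naive slope component'' statement manifest. Finally I would remark that non-degeneracy of \eqref{eqn:restricted pairing} — needed later to form the Drinfeld double $\CB_{\bm}$ — is not part of this proposition but follows from finite-dimensionality of the graded pieces $\CB_{\bm|\pm\bn}$ (Subsection preceding Definition \ref{def:subalg}) together with the explicit pairing formulas \eqref{eqn:pairing plus}--\eqref{eqn:pairing minus}, and I would defer it to wherever $\CB_{\bm}$ is actually constructed.
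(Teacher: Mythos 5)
Your proposal is correct and is essentially the paper's own argument: both reduce \eqref{eqn:bialg 1}--\eqref{eqn:bialg 2} for $\Delta_{\bm}$ to the same identities for $\Delta$, using that the pairing only matches opposite degrees and that $b_1,b_2\in\CB_{\bm}^{\leq}$ (resp.\ $a_1,a_2\in\CB_{\bm}^{\geq}$) sit in vertical degree exactly $\mp\bm\cdot\hdeg$, so every term of $\Delta(a)$ outside its top naive-slope component $\Delta_{\bm}(a)$ pairs to zero. The additional bookkeeping you flag (the $h^+_j(z)$ prefactors versus $h_{\bn_2}$, and the Cartan cases) is dispatched in the paper by the characterization of $\Delta_{\bm}$ as the component of $\Delta$ in $\bigoplus h_{\bn_2}\CA_{\bn_1,\bm\cdot\bn_1}\otimes\CA_{\bn_2,\bm\cdot\bn_2}$, and is left as an easy exercise there.
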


\begin{proof} Let us check \eqref{eqn:bialg 1} and leave the analogous formula \eqref{eqn:bialg 2} as an exercise to the interested reader. Moreover, we will only consider the case when $a \in \CB_{\bm}^+$ and $b_1, b_2 \in \CB_{\bm}^-$, as the situation when one or more of $a,b_1,b_2$ is of the form \eqref{eqn:cartan} is quite easy, and so left as an exercise to the interested reader. Thus, let us write: 
$$
\Delta(a) = \sum_{s \in S} a_{1,s} \otimes a_{2,s}
$$
where $S$ is some indexing set. Formula \eqref{eqn:cop slope plus} gives us:
\begin{equation}
\label{eqn:equivalent}
\vdeg a_{2,s} \leq \bm \cdot (\hdeg a_{2,s}) \quad \Leftrightarrow \quad \vdeg a_{1,s} \geq \bm \cdot (\hdeg a_{1,s}) 
\end{equation}
(the equivalence is due to the fact that $\vdeg a =  \bm \cdot (\hdeg a)$, on account of the very definition of $\CB_{\bm}^+ \ni a$). The definition of $\Delta_{\bm}$ implies that:
$$
\Delta_{\bm}(a) = \sum_{s \in S'} a_{1,s} \otimes a_{2,s}
$$
where the indexing set $S' \subset S$ consists of those $s \in S$ for which equality holds in \eqref{eqn:equivalent}. The fact that \eqref{eqn:bialg 1} holds with respect to $\Delta$ means that:
$$
 \langle a, b_1b_2 \rangle = \sum_{s\in S} \left \langle a_{1,s}, b_1 \right \rangle \left \langle a_{2,s}, b_2 \right \rangle
$$
However, because $\vdeg b_{1,2} = \bm \cdot (\hdeg b_{1,2})$, the pairings in the formula above are non-zero only if we have equality in \eqref{eqn:equivalent}, i.e. only if $s \in S'$. Therefore:
$$
 \langle a, b_1b_2 \rangle = \sum_{s\in S'} \left \langle a_{1,s}, b_1 \right \rangle \left \langle a_{2,s}, b_2 \right \rangle
$$
which precisely states that \eqref{eqn:bialg 1} also holds with respect to the coproduct $\Delta_{\bm}$. 

\end{proof}

\subsection{}
\label{sub:double b}

The pairing \eqref{eqn:restricted pairing} is non-degenerate \footnote{As usual in the theory of quantum groups, this statement is true as stated for the restricted pairing \eqref{eqn:pairing sub} to the $\pm$ subalgebras. To have the statement hold for the $\geq, \leq$ subalgebras, one needs to work instead over the power series ring in $\log(q), \log(t_e)$ instead of over $\BF = \BQ(q,t_e)_{e \in E}$}, as we will show in Proposition \ref{prop:non deg}. This will allow us to define the Drinfeld double:
\begin{equation}
\label{eqn:double b}
\CB_{\bm} = \CB_{\bm}^\geq \otimes \CB_{\bm}^{\leq} 
\end{equation}
as in Subsection \ref{sub:double}, which has a universal $R$-matrix as in Subsection \ref{sub:r-matrix}:
$$
\CR_{\bm} \in \CB^{\geq}_{\bm} \ \widehat{\otimes} \ \CB^{\leq}_{\bm} \subset \CB_{\bm} \ \widehat{\otimes} \ \CB_{\bm}
$$
Explicitly, $\CR_{\bm}$ is the canonical tensor of the pairing \eqref{eqn:restricted pairing}. As in \eqref{eqn:univ r}, we have:
\begin{equation}
\label{eqn:univ r sub}
\CR_{\bm} = \CR'_{\bm} \cdot \Big[\text{a sum of products involving the }h_{i,\pm 0} \Big]
\end{equation}
where $\CR'_{\bm}$ is the canonical tensor of the restriction of the pairing \eqref{eqn:restricted pairing} to:
\begin{equation}
\label{eqn:pairing sub}
\langle \cdot , \cdot \rangle : \CB^+_{\bm} \otimes \CB^-_{\bm} \longrightarrow \BF
\end{equation}
Although they look similar, we emphasize the fact that the Drinfeld doubles $\CA$ and $\CB_{\bm}$ are defined with respect to the different coproducts $\Delta$ and $\Delta_{\bm}$, respectively. Since the product in a Drinfeld double (namely relation \eqref{eqn:double formula}) is controlled by the coproduct that is used to define the double, the following result is non-trivial. \\

\begin{proposition}
\label{prop:compatible 2}

The inclusion map $\CB_{\bm} \subset \CA$ (obtained by tensoring together the natural inclusion maps $\CB_{\bm}^\geq \subset \CA^\geq$ and $\CB_{\bm}^\leq \subset \CA^\leq$) is an algebra homomorphism. \\

\end{proposition}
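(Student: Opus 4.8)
The plan is to verify that the Drinfeld-double relation \eqref{eqn:double formula} that defines the product on $\CB_{\bm}$ is a consequence of the one that defines the product on $\CA$, so that the obvious vector-space inclusion $\CB_{\bm}^\geq \otimes \CB_{\bm}^\leq \hookrightarrow \CA^\geq \otimes \CA^\leq = \CA$ respects multiplication. The subtlety, as the paper emphasizes, is that the relation for $\CB_{\bm}$ is written in terms of $\Delta_{\bm}$ while the one for $\CA$ is written in terms of $\Delta$, and these differ. So the first step is to take $a \in \CB_{\bm}^+$ and $b \in \CB_{\bm}^-$, write out the $\CA$-relation $\sum a_1 b_1 \langle a_2, b_2 \rangle = \sum b_2 a_2 \langle a_1, b_1 \rangle$ using $\Delta(a) = \sum_{s\in S} a_{1,s}\otimes a_{2,s}$ and $\Delta^{\op}(b)$, and argue that only the terms with $s \in S'$ (in the notation of the proof of Proposition \ref{prop:compatible 1}) actually contribute. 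This is exactly the mechanism already used in Proposition \ref{prop:compatible 1}: since $\vdeg b = \bm \cdot \hdeg b$ on the nose for $b \in \CB_{\bm}^-$, the pairing $\langle a_{2,s}, b_2 \rangle$ (or $\langle a_{1,s}, b_1\rangle$) can only be nonzero when the naive-slope inequality \eqref{eqn:equivalent} is an equality, i.e. when the relevant Sweedler component lies in the $\Delta_{\bm}$-part of $\Delta$. Hence both sides of the $\CA$-relation, when paired against $\CB_{\bm}^\leq$-elements, collapse to sums over $S'$, which is precisely the content of the $\CB_{\bm}$-relation \eqref{eqn:double formula} for $\Delta_{\bm}$.

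More carefully, I would phrase it as follows. Relation \eqref{eqn:double formula} in $\CA$ is equivalent (by non-degeneracy of the pairing \eqref{eqn:hopf pairing}, or just directly as the definition of the cross-relations in the double) to a family of identities expressing $b\cdot a$ — for $a$ in the positive part, $b$ in the negative part — as an element of $\CA^\geq \otimes \CA^\leq$ whose coefficients are built from the pairing and the coproducts. Because the pairing between $\CA^\geq$ and $\CA^\leq$ pairs only elements of opposite $\nn\times\BZ$-degree, and because every element of $\CB_{\bm}^\pm$ has vertical degree exactly $\bm\cdot(\text{horizontal degree})$, every pairing that occurs when $a,b$ are taken in the slope subalgebras automatically selects the $\Delta_{\bm}$-components of $\Delta(a)$ and $\Delta(b)$. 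Substituting $\Delta \rightsquigarrow \Delta_{\bm}$ everywhere is therefore legitimate, and the resulting identity is exactly \eqref{eqn:double formula} computed inside $\CB_{\bm}$. Since the products on the two tensor factors $\CB_{\bm}^\geq \subset \CA^\geq$ and $\CB_{\bm}^\leq \subset \CA^\leq$ are restrictions of the products on $\CA^\geq$, $\CA^\leq$ (here one also needs that $\CB_{\bm}^\pm$ are genuinely subalgebras of $\CA^\pm$, which is Proposition \ref{prop:cop slope} and the remark after it, and that relations \eqref{eqn:rel ext plus}--\eqref{eqn:rel ext minus} are inherited), the inclusion $\CB_{\bm}\subset\CA$ is multiplicative on each factor and on cross-terms, hence an algebra homomorphism.

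I expect the main obstacle to be bookkeeping rather than conceptual: one must check that no ``extra'' terms of $\Delta(a)$ or $\Delta(b)$ — those with strictly smaller naive slope in the second (resp.\ first) tensor factor — sneak into the $\CA$-relation through a pairing with a Cartan element $h_{\pm\bn}$ of the correct degree, and that the degree-counting which forces $s\in S'$ really is two-sided (i.e.\ works symmetrically for \eqref{eqn:double formula} read in both directions). The cleanest way to handle this is to pair the $\CA$-identity \eqref{eqn:double formula} against an arbitrary element $c \in \CB_{\bm}^\leq$ on one side and $c' \in \CB_{\bm}^\geq$ on the other using \eqref{eqn:bialg 1}--\eqref{eqn:bialg 2}, invoke Proposition \ref{prop:compatible 1} to rewrite everything in terms of $\Delta_{\bm}$, and then use the non-degeneracy of the restricted pairing \eqref{eqn:pairing sub} (forward-referenced as Proposition \ref{prop:non deg}) to conclude that the $\CB_{\bm}$-version of \eqref{eqn:double formula} holds identically. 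The only mild care needed is the treatment of the Cartan generators $h_{i,\pm0}$, but there \eqref{eqn:rel ext plus}--\eqref{eqn:rel ext minus} hold verbatim in both algebras, so that case is immediate.
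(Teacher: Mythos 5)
Your proposal is correct and follows essentially the same route as the paper: write out relation \eqref{eqn:double formula} in $\CA$ for $a \in \CB_{\bm}^{\geq}$, $b \in \CB_{\bm}^{\leq}$, and observe that since $\vdeg = \bm\cdot\hdeg$ holds exactly on $\CB_{\bm}^{\pm}$ while $a_{2,s}$ and $b_{1,t}$ have naive slope $\leq \bm$ by \eqref{eqn:cop slope plus}--\eqref{eqn:cop slope minus}, the degree-matching of the pairing kills all terms outside $S'\times T'$, collapsing the $\CA$-relation to the $\CB_{\bm}$-relation for $\Delta_{\bm}$. The detour through non-degeneracy of the restricted pairing in your last paragraph is unnecessary (the paper concludes directly from the vanishing of the extra terms), but it does no harm.
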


\begin{proof} Consider any $a \in \CB^{\geq}_{\bm}$ and $b \in \CB^{\leq}_{\bm}$, and let us write:
$$
\Delta(a) = \sum_{s \in S} a_{1,s} \otimes a_{2,s} \qquad \text{and} \qquad \Delta(b) = \sum_{t \in T} b_{1,t} \otimes b_{2,t}
$$
for some sets $S$ and $T$. By the definition of $\Delta_{\bm}$, we have:
$$
\Delta_{\bm}(a) = \sum_{s \in S'} a_{1,s} \otimes a_{2,s} \quad \text{ and } \quad \Delta_{\bm}(b) = \sum_{t \in T'} b_{1,t} \otimes b_{2,t}
$$
where the indexing sets $S' \subset S$, $T' \subset T$ consists of those $s \in S$, $t \in T$ such that:
\begin{align*}
&\vdeg a_{1,s} = \bm \cdot (\hdeg a_{1,s}) \quad \Leftrightarrow \quad \vdeg a_{2,s} = \bm \cdot (\hdeg a_{2,s}) \\ 
&\vdeg b_{1,t} = \bm \cdot (\hdeg b_{1,t}) \ \quad \Leftrightarrow \quad \vdeg b_{2,t} = \bm \cdot (\hdeg b_{2,t})
\end{align*}
Formula \eqref{eqn:double formula} implies that the following relation holds in $\CA$:
\begin{equation}
\label{eqn:formula}
\sum_{s \in S, t \in T} a_{1,s} b_{1,t} \left\langle a_{2,s},b_{2,t} \right \rangle = \sum_{s \in S, t \in T} b_{2,t} a_{2,s} \left \langle a_{1,s},b_{1,t} \right \rangle 
\end{equation}
However, formulas \eqref{eqn:cop slope plus}--\eqref{eqn:cop slope minus} imply that $a_{2,s}$ and $b_{1,t}$ have naive slope $\leq \bm$, for all $s \in S$ and $t \in T$. This implies that:
\begin{align*}
&\vdeg a_{2,s} \leq \bm \cdot (\hdeg a_{2,s}) \quad \Rightarrow \quad \vdeg a_{1,s} \geq \bm \cdot (\hdeg a_{1,s}) \\ 
&\vdeg b_{1,t} \geq \bm \cdot (\hdeg b_{1,t}) \ \quad \Rightarrow \quad \vdeg b_{2,t} \leq \bm \cdot (\hdeg b_{2,t})
\end{align*}
where in both cases, the implication is due to our assumption that $\vdeg a = \bm \cdot (\hdeg a)$ and $\vdeg b = \bm \cdot (\hdeg b)$. Therefore, the only way for the pairings in the left/right hand sides of \eqref{eqn:formula} to be non-zero is to have equality in all the inequalities above, which would imply $s \in S'$ and $t \in T'$. We therefore have:
$$
\sum_{s \in S', t \in T'} a_{1,s} b_{1,t} \left\langle a_{2,s}, b_{2,t} \right \rangle = \sum_{s \in S', t \in T'} b_{2,t} a_{2,s} \left \langle a_{1,s}, b_{1,t} \right \rangle 
$$
However, the equality above is simply \eqref{eqn:double formula} in the double $\CB_{\bm}$, which implies that the same multiplicative relations hold in $\CA$ as in $\CB_{\bm}$. 

\end{proof}

\subsection{} 

Let us now fix $\bm \in \qq$ and $\bth \in \qqp$, and consider the subalgebras $\{\CB_{\bm+r\bth}\}_{r\in \BQ}$. \\

\begin{proposition}
\label{prop:pbw pairing}

For any $\bm \in \qq$ and $\bth \in \qqp$, we have:
\begin{equation}
\label{eqn:pbw pairing}
\left \langle \prod_{r \in \BQ}^{\rightarrow} a_r, \prod_{r \in \BQ}^{\rightarrow} b_r \right \rangle = \prod_{r \in \BQ}^{\rightarrow} \langle a_r, b_r \rangle
\end{equation}
for all elements $\left\{a_r \in \CB_{\bm + r \bth}^+, b_r \in \CB_{\bm + r \bth}^-\right\}_{r\in \BQ}$, almost all of which are equal to 1. \\

\end{proposition}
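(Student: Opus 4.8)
The plan is to reduce the statement to the case of two factors and a single-variable bookkeeping, and then bootstrap to the full (finite, by the ``almost all equal to $1$'' hypothesis) product by induction. First I would observe that, since only finitely many $a_r,b_r$ differ from $1$, there are rationals $r_1 < r_2 < \dots < r_k$ carrying all the nontrivial factors, and it suffices to prove
$$
\left\langle a_{r_1} * \dots * a_{r_k},\, b_{r_1} * \dots * b_{r_k} \right\rangle = \prod_{j=1}^k \langle a_{r_j}, b_{r_j}\rangle.
$$
The engine is the bialgebra identity \eqref{eqn:bialg 1}: $\langle a, b_1 b_2\rangle = \langle \Delta(a), b_1\otimes b_2\rangle$, applied with $a = a_{r_1} * \dots * a_{r_k}$, $b_1 = b_{r_1}$, and $b_2 = b_{r_2} * \dots * b_{r_k}$ (proceeding by induction on $k$, peeling off one factor at a time). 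The point is to control which Sweedler components of $\Delta(a)$ survive the pairing against $b_{r_1}\otimes(\text{rest})$.

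The key input is a slope/degree count. Each $a_{r_j}$ lies in $\CB^+_{\bm + r_j\bth}$, hence has slope $\leq \bm + r_j\bth$ and naive slope exactly $\bm + r_j\bth$, i.e. $\vdeg a_{r_j} = (\bm + r_j\bth)\cdot\hdeg a_{r_j}$. Writing $\Delta(a) = \sum_s a_{1,s}\otimes a_{2,s}$, Proposition \ref{prop:cop slope} forces every $a_{2,s}$ to have naive slope $\leq \bm + r_1\bth$ (using that the slope of a product is governed by the largest slope among the factors — here $\bm+r_k\bth$, but we want the \emph{smallest}, so I should instead run the peeling from the \emph{other} end, pairing off $b_{r_k}$ last, or equivalently note that $\Delta(a)$'s first tensor factor can be controlled via \eqref{eqn:bialg 2}). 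Let me instead use \eqref{eqn:bialg 2}: $\langle a_1 a_2, b\rangle = \langle a_1\otimes a_2, \Delta^{\op}(b)\rangle$, with $a_1 = a_{r_1}$, $a_2 = a_{r_2}*\dots*a_{r_k}$, $b = b_{r_1}*\dots*b_{r_k}$. Writing $\Delta(b) = \sum_t b_{1,t}\otimes b_{2,t}$, Proposition \ref{prop:cop slope} gives that every $b_{1,t}$ has naive slope $\leq \bm + r_k\bth$; combined with the fact that a product $b_{r_1}*\dots*b_{r_k}$ of elements of naive slopes $\bm+r_j\bth$ can only have Sweedler-first-components of naive slope $\geq \bm + r_1\bth$ (the minimal slope among the factors controls the coproduct from the left, by multiplicativity of $\Delta$ and \eqref{eqn:cop slope minus}), and since $\vdeg a_{r_1} = (\bm+r_1\bth)\cdot\hdeg a_{r_1}$ pins $b_{1,t}$ to naive slope exactly $\bm+r_1\bth$, only the terms $b_{1,t} \in \CB^-_{\bm+r_1\bth|-\hdeg a_{r_1}}$ contribute. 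A parallel argument shows the complementary factor $b_{2,t}$ must then be, up to Cartan elements, a product of elements of slopes $\bm + r_j\bth$ for $j \geq 2$. One then checks that the surviving terms of $\Delta(b)$ restricted this way are exactly those computed by $\Delta_{\bm+r_1\bth}$ on the $b_{r_1}$-part tensored with the rest, so that
$$
\langle a_{r_1}, b_{1,t}\rangle = \langle a_{r_1}, (\text{the }\CB^-_{\bm+r_1\bth}\text{-component})\rangle,
$$
reducing to the single-slope pairing, and the leftover factor is $\langle a_{r_2}*\dots*a_{r_k}, b_{r_2}*\dots*b_{r_k}\rangle$ plus Cartan corrections, which vanish or are trivial because the $a$'s carry no Cartan part. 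Induction on $k$ finishes it.

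I would make the bookkeeping precise by recording two lemmas: (i) if $F \in \CA^+$ is a product $F_1 * \dots * F_\ell$ with $F_j$ of naive slope $m_j$ and $m_1 \leq \dots \leq m_\ell$, then in $\Delta(F) = \sum F_{1}\otimes F_2$ every $F_1$ has naive slope $\geq m_1$, with the ``boundary'' component of naive slope exactly $m_1$ supported on $F_{r_1}$'s own coproduct pieces times $h$'s (this is essentially the $\Delta_{\bm}$ vs $\Delta$ comparison already spelled out after \eqref{eqn:cartan}); and symmetrically for $F_2$ and $m_\ell$; and (ii) the pairing \eqref{eqn:hopf pairing} kills any pair of elements whose naive slopes are not ``aligned'' in the way dictated by the degree-pairing constraint $\vdeg a + \vdeg b = 0$, $\hdeg a + \hdeg b = 0$ together with $\vdeg a_{r_j} = (\bm+r_j\bth)\cdot\hdeg a_{r_j}$. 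Granting (i) and (ii), the telescoping in the previous paragraph is forced.

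The main obstacle I anticipate is Lemma (i) in the direction of showing that the ``boundary'' slope component of the coproduct of a slope-ordered product is exactly $\Delta_{\bm+r_1\bth}(F_{r_1}) \otimes (\text{product of the rest})$ up to Cartan terms — i.e. that there is no ``leakage'' between adjacent slopes under $\Delta$, and that the Cartan elements $h_{\bn-\bk}$ appearing in \eqref{eqn:cop b plus} match up correctly across the factorization. This is the place where the strict inequality $\bth \in \qqp$ (so the slopes $r_1 < \dots < r_k$ are genuinely separated) is used, and where one must track the powers of $h_{i,+0}$ through relations \eqref{eqn:rel ext plus}. Everything else is a degree count that follows mechanically from Proposition \ref{prop:cop slope} and the definition of $\Delta_{\bm}$.
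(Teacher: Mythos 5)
Your overall strategy -- induct on the number of nontrivial factors, peel one slope off at a time using \eqref{eqn:bialg 2}, and kill the cross terms by a slope-versus-degree count -- is exactly the paper's strategy. But your execution has a concrete error in the direction of the slope inequalities, and it sits precisely at the step you flag as the crux (your Lemma (i)). Proposition \ref{prop:cop slope} bounds the \emph{first} Sweedler factor of $\Delta(G)$, $G \in \CA^-$, from the side of the slope of $G$ itself; for a product $b = b_{r_1} * \dots * b_{r_k}$ this gives control by the \emph{maximal} slope $\bm + r_k\bth$, and there is no lower bound by the minimal slope $\bm + r_1\bth$ on the first factors (already for $k=1$: the first components of $\Delta(f_{i,d})$ are $f_{i,d+k}$, $k \geq 0$, whose naive slopes go the wrong way). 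Moreover, under \eqref{eqn:bialg 2} with $a_1 = a_{r_1}$ and $a_2 = a_{r_2}*\dots*a_{r_k}$, the factor that pairs with $a_{r_1}$ is the \emph{second} Sweedler factor $b_{2,t}$ (because of the $\op$), not $b_{1,t}$ as you write; so the bound you invoke is on the wrong tensor leg and is false in the form you need it.

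There are two ways to repair this. The paper's way: peel off the factor of \emph{maximal} slope $r$, i.e.\ apply \eqref{eqn:bialg 2} with $a_2 = a_r$, so that $a_r$ pairs against $\prod_{r' \leq r} b_{r',1}$; now Proposition \ref{prop:cop slope} applies verbatim to each $b_{r',1}$ and, since $r' < r$ and $\hdeg b_{r',1} \leq \b0$, forces every such $b_{r',1}$ with $r' < r$ to have degree $(\b0,0)$, leaving only $\langle a_r, b_{r,1}\rangle$. Alternatively, if you insist on peeling from the bottom, you must use the \emph{complementary} inequality on second Sweedler factors: from $\vdeg b_{r_j} = (\bm + r_j\bth)\cdot \hdeg b_{r_j}$ and the first-factor bound one deduces $\vdeg b_{r_j,2} \leq (\bm+r_j\bth)\cdot\hdeg b_{r_j,2} \leq (\bm+r_1\bth)\cdot\hdeg b_{r_j,2}$, and summing these makes the degree constraint from $a_{r_1}$ kill all $b_{r_j,2}$ with $j \geq 2$. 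Either way you still need the case split the paper makes on $|\hdeg a_r|$ versus $-|\hdeg b_r|$ (without it you cannot conclude that the surviving piece of $\Delta(b_r)$ is the extreme term $b_r \otimes h_{-\hdeg b_r}$ rather than a proper sub-piece of the same degree), and the Cartan ``leakage'' you worry about is a non-issue: it reduces to $\langle a, b\, h_{-\bn}\rangle = \langle a, b\rangle$ via \eqref{eqn:bialg 1}, with no need to track \eqref{eqn:rel ext plus}.
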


\begin{proof} Let $r \in \BQ$ be maximal such that $a_r \neq 1$ or $b_r \neq 1$, and let us assume that $|\hdeg a_r| \geq - |\hdeg b_r|$ (the opposite case is treated analogously, so we leave it as an exercise to the interested reader). Then formula \eqref{eqn:bialg 2} implies:
\begin{equation}
\label{eqn:pair many 1}
\left \langle \prod_{r' < r} a_{r'} \cdot a_r, \prod_{r' \leq r} b_{r'} \right \rangle = \left \langle \prod_{r' < r} a_{r'}, \prod_{r' \leq r} b_{r',2} \right \rangle\left \langle a_r, \prod_{r' \leq r} b_{r',1} \right \rangle
\end{equation}
where we use Sweedler notation $\Delta(b_r) = b_{r,1} \otimes b_{r,2}$. Because of \eqref{eqn:cop slope minus}, all of $b_{r',1}$ with $r' < r$ have slope strictly smaller than $\bm + r \bth$. Therefore, the only term in the right-hand side of the expression above which could pair non-trivially with $a_r \in \CB_{\bm+r\bth}^+$ is $b_{r,1}$. However, because of the assumption that $|\hdeg a_r| \geq - |\hdeg b_r|$, such a non-trivial pairing is only possible if all three of the properties below hold: \\

\begin{itemize}[leftmargin=*]

\item $\hdeg a_r = - \hdeg b_r$ \\

\item $b_{r,1}$ is the first tensor factor in the first summand below:
$$
\Delta(b_r) = b_r \otimes h_{-\hdeg b_r} + \dots
$$

\item $b_{r',1}$ is the first tensor factor in the first summand below:
$$
\Delta(b_{r'}) = 1 \otimes b_{r'} + \dots
$$
for all $r' < r$. \\

\end{itemize}

\noindent Therefore, \eqref{eqn:pair many 1} implies:
\begin{equation}
\label{eqn:pair many 2}
\left \langle \prod_{r' \leq r} a_{r'}, \prod_{r' \leq r} b_{r'} \right \rangle = \left \langle \prod_{r' < r} a_{r'}, \prod_{r' < r} b_{r'} \cdot h_{-\hdeg b_r} \right \rangle\left \langle a_r,b_r \right \rangle
\end{equation}
Since \eqref{eqn:bialg 1} implies the following identity for any $a \in \CA^+$, $b \in \CA^-$ and any $\bn \in \nn$:
$$
\left \langle a, b \cdot h_{-\bn} \right \rangle = \langle a,b \rangle  \left \langle 1, h_{-\bn} \right \rangle = \langle a,b \rangle
$$
the right-hand side of \eqref{eqn:pair many 2} is unchanged if we remove $h_{-\hdeg b_r}$. Iterating identity \eqref{eqn:pair many 2} for the various $r'$ for which $a_{r'} \neq 1$ or $b_{r'} \neq 1$, one obtains identity \eqref{eqn:pbw pairing}. 

\end{proof}

\subsection{}

Still fixing $\bm \in \qq$ and $\bth \in \qqp$ as before, our main goal in the following Subsections (see Corollary \ref{cor:iso}) is to prove that multiplication yields isomorphisms:
\begin{equation}
\label{eqn:pbw}
\bigotimes_{r \in \BQ}^{\rightarrow} \CB_{\bm+r\bth}^\pm \xrightarrow{\sim} \CA^\pm
\end{equation}
If we write $\CB_{\bm + \infty \bth}^{\pm} = \BF[h_{i,\pm 0}^{\pm 1}, h_{i,\pm 1}, \dots]_{i\in I}$, then \eqref{eqn:pbw} leads to isomorphisms:
\begin{equation}
\label{eqn:pbw double}
\bigotimes_{r \in \BQ \sqcup \infty}^{\rightarrow} \CB^+_{\bm+r\bth} \xrightarrow{\sim} \CA^{\geq} \qquad \text{and} \qquad \bigotimes_{r \in \BQ \sqcup \infty}^{\rightarrow} \CB^-_{\bm+r\bth} \xrightarrow{\sim} \CA^{\leq}
\end{equation}
Thus, the entire $\CA = \CA^{\geq} \otimes \CA^{\leq}$ factors as the tensor product of the $\{\CB_{\bm+r\bth}^\pm\}_{r\in \BQ \sqcup \infty}$. \\

\begin{proposition}
\label{prop:surj}

For any $\bm \in \qq$, $\bth \in \qqp$ and $p \in \BQ$, the multiplication map:
\begin{equation}
\label{eqn:pbw finite}
\bigotimes_{r \in \BQ_{\leq p}}^{\rightarrow} \CB_{\bm+r\bth}^\pm \twoheadrightarrow \CA^\pm_{\leq \bm+p\bth}
\end{equation}
is surjective (above, $\BQ_{\leq p}$ denotes the set of rational numbers $\leq p$). \\

\end{proposition}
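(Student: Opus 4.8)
The plan is to prove the $+$ case (the $-$ case is entirely parallel), by induction on $|\bn|$, the total horizontal degree \eqref{eqn:length}. First I would dispose of the assertion that \eqref{eqn:pbw finite} is even well defined as stated: since $\bth\in\qqp$ and ``slope $\le\bm'$'' implies ``slope $\le\bm''$'' whenever $\bm'\le\bm''$ componentwise (immediate from \eqref{eqn:slope plus}), we have $\CB^+_{\bm+r\bth}\subseteq\CA^+_{\le\bm+r\bth}\subseteq\CA^+_{\le\bm+p\bth}$ for all $r\le p$; as $\CA^+_{\le\bm+p\bth}$ is a subalgebra (the sentence following Proposition \ref{prop:cop slope}), every increasing product of such elements lands in $\CA^+_{\le\bm+p\bth}$, so only surjectivity is at issue. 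Because $\CA^+_{\le\bm+p\bth}$ is graded it suffices to reach an arbitrary homogeneous $F$ of degree $(\bn,d)$, and $\bn=\b0$ is trivial. Set $r_0=\frac{d-\bm\cdot\bn}{\bth\cdot\bn}$ (legitimate since $\bth\cdot\bn>0$); the $\bk=\bn$ case of the slope-$\le\bm+p\bth$ condition forces $d\le(\bm+p\bth)\cdot\bn$, i.e.\ $r_0\le p$, while by construction $d=(\bm+r_0\bth)\cdot\bn$, so $F$ has naive slope exactly $\bm+r_0\bth$.

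The heart of the argument is a secondary, \emph{downward} induction, carried out within each fixed degree $(\bn,d)$, on the ``true slope'' of $F$. The function $q\mapsto\dim\CA^+_{\le\bm+q\bth|\bn,d}$ is non-decreasing, integer valued, and bounded by $\dim\CA^+_{\le\bm+p\bth|\bn,d}<\infty$, hence has finitely many points of increase in $(-\infty,p]$; and ``$F$ has slope $\le\bm+q\bth$'' is a closed condition in $q$ (a conjunction of inequalities $\deg_\xi(\bullet)\le\bm\cdot\bk+q\,\bth\cdot\bk+\langle\bk,\bn-\bk\rangle$, affine increasing in $q$). So there is a well-defined minimal $q=q(F)\in[r_0,p]$ with $F$ of slope $\le\bm+q\bth$. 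If $q=r_0$, then $F$ has slope $\le\bm+r_0\bth$ and naive slope $=\bm+r_0\bth$, i.e.\ $F\in\CB^+_{\bm+r_0\bth}$, which lies in the image of \eqref{eqn:pbw finite} since $r_0\le p$; done. If $q>r_0$, I would peel off a slope-$(\bm+q\bth)$ factor: as $q$ is then a point of increase of the dimension function above, there is $\b0<\bk^*<\bn$ for which the limit \eqref{eqn:slope plus} (with $\bm\rightsquigarrow\bm+q\bth$, $\bk=\bk^*$) is a \emph{nonzero} symmetric Laurent polynomial $\Phi$. The key structural input is that such a leading term factorizes: $\Phi$ is a sum of shuffle products $A*B$ of shuffle elements of strictly smaller horizontal degree, in which $B$ has naive slope $=\bm+q\bth$ and slope $\le\bm+q\bth$ — so that $B\in\CB^+_{\bm+q\bth}$ — while $A$ has slope $\le\bm+q\bth$. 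Equivalently, this is the leading-naive-slope part of the coproduct formula \eqref{eqn:cop plus}: combined with Proposition \ref{prop:cop slope}, the second tensor factors of $\Delta(F)$ of naive slope exactly $\bm+q\bth$ lie in $\CB^+_{\bm+q\bth}$ (and the leading first tensor factors have slope $\le\bm+q\bth$). Subtracting from $F$ a suitable linear combination $F_0:=\sum_j A_j*B_j$ of such products, chosen to account for \emph{all} indices $\bk^*$ realizing the maximal order of vanishing, yields $F-F_0$ of the same degree $(\bn,d)$ but with $q(F-F_0)<q(F)$ — here one uses that each $A_j*B_j$ lies in $\CA^+_{\le\bm+q\bth}$ (a subalgebra), so $F_0$ introduces no new obstructions at level $q$.

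Now the two induction hypotheses close the argument. The term $F-F_0$ is handled by the downward induction on true slope (which bottoms out at $q=r_0$). Each $B_j\in\CB^+_{\bm+q\bth}$, with $q\le p$, is already an admissible factor for \eqref{eqn:pbw finite}. Each $A_j$, of strictly smaller horizontal length and slope $\le\bm+q\bth\le\bm+p\bth$, is by the primary induction an increasing product $b_{r_1}*\cdots*b_{r_k}$ with $r_1<\cdots<r_k\le q$; appending $B_j$ then produces an increasing product, after absorbing $b_{r_k}B_j$ into $\CB^+_{\bm+q\bth}$ when $r_k=q$ (legitimate since each $\CB^+_{\bm+r\bth}$ is a subalgebra). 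Hence $F$ lies in the image of \eqref{eqn:pbw finite}, completing the induction. The main obstacle is exactly the peel-off: proving that the leading term $\Phi$ of $F$ under the rescaling \eqref{eqn:slope plus} is a sum of shuffle products of lower-degree shuffle elements with the stated slope bounds, and that a single correction $F_0$ can be chosen to annihilate the leading terms at all maximal $\bk^*$ simultaneously (avoiding cross-contamination between different $\bk^*$, which may require an auxiliary finite induction on the set of such $\bk^*$). This is a leading-order analysis of the shuffle product \eqref{eqn:shuf prod} — the powers of $\xi$ produced by the cross-factors $\zeta_{ij}$ are precisely what convert naive degree bounds into the exponent $\bm\cdot\bk+\langle\bk,\bn-\bk\rangle$ of \eqref{eqn:slope plus} — and is the general-quiver analogue of the leading-term computations carried out for cyclic quivers in \cite{Thesis,Cyc}; tracking the $\zeta$-factors and the $h$-terms in \eqref{eqn:cop plus} is where the real care is needed.
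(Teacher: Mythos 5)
Your architecture coincides with the paper's: induct on horizontal degree, locate the worst violation of slope $\le \bm+r_0\bth$ (where $r_0$ is the naive slope of $F$), peel off a factor in $\CB^+_{\bm+q\bth}$ by reading the leading term of the coproduct through Proposition \ref{prop:cop slope}, and subtract. But the step you yourself flag as ``the main obstacle'' --- that the leading term factorizes as $\sum_j A_j * B_j$ with $B_j\in\CB^+_{\bm+q\bth}$, and that a single correction $F_0$ strictly improves the slope --- is not an incidental technicality to be deferred: it is the entire content of the paper's proof (Claims \ref{claim:hinge} and \ref{claim:final}), and the version you assert is slightly too weak to close the argument. You only claim the first factors $A_j$ have slope $\le\bm+q\bth$. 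The paper proves the \emph{strict} inequality (Claim \ref{claim:final}) by a monomial-by-monomial degree estimate on $F$ that exploits the maximality of the chosen $(\bk^*,e)$, and this strictness is exactly what guarantees that in $\Delta(F_0)=\sum_j\Delta(A_j)\Delta(B_j)$ the only contribution to the component in $\CA_{\bn-\bk^*,d-e}\otimes\CA_{\bk^*,e}$ is $(A_j\otimes 1)(h_{\bk^*}\otimes B_j)$. Without it, nontrivial second tensor factors of $\Delta(A_j)$ of naive slope exactly $q$ could recombine with pieces of $\Delta(B_j)$ to produce new terms in that same bidegree, and your assertion $q(F-F_0)<q(F)$ is unsubstantiated; the subtraction might merely shuffle the obstruction around.

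A second, smaller point: the ``cross-contamination between different $\bk^*$'' that you defer to ``an auxiliary finite induction'' is resolved in the paper not by correcting all maximal $\bk^*$ simultaneously, but by finer bookkeeping: it introduces \emph{bad hinges} $(\bk,e)$ (bidegrees where $\Delta(F)$ has a component violating naive slope $r_0$), orders them by slope and then by $|\bk|$, and kills exactly one maximal bad hinge per subtraction, the matching being the scalar identity \eqref{eqn:gamma}; finiteness of the set of bad hinges terminates the process. This replaces your downward induction on the true slope $q(F)$ and sidesteps the simultaneity problem entirely. In short, the skeleton is right and the auxiliary observations (well-definedness, finiteness of the relevant slopes, absorption of the last factor into the subalgebra $\CB^+_{\bm+q\bth}$) are sound, but the proposition's difficulty is concentrated in precisely the two places you left open.
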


\noindent Since any element of $\CA^+$ has slope $\leq \bm+r\bth$ for $r \in \BQ$ large enough (this is because $\bth \in \qqp$), the surjectivity of \eqref{eqn:pbw finite} implies that the multiplication map:
\begin{equation}
\label{eqn:pbw infinite}
\bigotimes_{r \in \BQ}^{\rightarrow} \CB_{\bm+r\bth}^\pm \twoheadrightarrow  \CA^\pm
\end{equation}
is also surjective. \\

\begin{proof} \emph{of Proposition \ref{prop:surj}:} Let us consider the restriction of the multiplication map \eqref{eqn:pbw finite} to the subspaces of given degree $(\bn,d) \in \nn \times \BZ$:
\begin{equation}
\label{eqn:pbw deg}
\mathop{\bigoplus_{\sum_{r \in \BQ_{\leq p}} \bn_r = \bn}}_{\sum_{r\in\BQ_{\leq p}} (\bm+r\bth)\cdot \bn_r = d} \left[ \bigotimes_{r \in \BQ_{\leq p}}^{\rightarrow} \CB_{\bm+r\bth|\pm \bn_r} \right] \xrightarrow{\phi_{\pm \bn, \pm d}} \CA_{\leq \bm+p\bth|\pm \bn,\pm d}
\end{equation}
(the indexing set goes over all sequences $(\bn_r)_{r \in \BQ_{\leq p}}$ of elements of $\nn$, almost all of which are 0). We will prove that $\phi_{\pm \bn, \pm d}$ is surjective by induction on $\bn$, with respect to the ordering \eqref{eqn:ineq deg} (the base case, when $\bn = \bs^i$ for some $i \in I$, is trivial). To streamline the subsequent explanation, if a certain element of the shuffle algebra has slope (or naive slope) $\bm + r\bth$, we will actually refer to the number $r \in \BQ$ as its slope (or naive slope). This also has the added benefit of making the notion ``naive slope $=r$" unambiguous, as the fact that $\bth \in \qqp$ means that for any $(\bn,d) \in \nn \times \BZ$, there exists exactly one rational number $r$ for which $(\bm+r\bth)\cdot \bn = d$. \\

\noindent So let us show that any element $F \in \CA_{\leq \bm + p\bth|\bn, d}$ lies in the image of the map $\phi_{\bn,d}$ (we will only discuss the case $\pm = +$, as the $\pm = -$ case is analogous). Let $r \leq p$ denote the naive slope of $F$, and let us call \textbf{hinges} those:
\begin{equation}
\label{eqn:hinge}
(\bk,e) \in \nn \times \BZ
\end{equation}
such that $\Delta(F)$ has a non-zero component in:
\begin{equation}
\label{eqn:deg hinge}
\CA_{\bn - \bk, d-e} \otimes \CA_{\bk,e}
\end{equation}
Clearly, a hinge would need to satisfy \eqref{eqn:ineq deg}, and by \eqref{eqn:cop slope plus}, also the inequality:
\begin{equation}
\label{eqn:ineq 1}
e \leq (\bm+p\bth) \cdot \bk
\end{equation}
We will call a hinge \textbf{bad} if:
\begin{equation}
\label{eqn:ineq 2}
e > (\bm + r \bth) \cdot \bk
\end{equation}
It is easy to see that $F$ has finitely many bad hinges, as there are only finitely many values of $\bk$ satisfying \eqref{eqn:ineq deg}, and for any such $\bk$, finitely many integers $e$ that satisfy the inequalities \eqref{eqn:ineq 1} and \eqref{eqn:ineq 2}. If $F$ has no bad hinges, then by \eqref{eqn:cop slope plus}, $F$ would lie in $\CB^+_{\bm + r \bth}$ and we would be done. Thus, our strategy will be to successively subtract from $F$ elements in the image of $\phi_{\bn,d}$ so as to ``kill" all its bad hinges. \\

\noindent The slope of a bad hinge \eqref{eqn:hinge} is that rational number $\rho \in  (r,p]$ such that:
\begin{equation}
\label{eqn:slope of hinge}
e = (\bm+\rho \bth) \cdot \bk
\end{equation}
Let us consider the partial order on the set of bad hinges, given primarily by slope, and then by $|\bk|$ to break ties between hinges of the same slope. Let us write $(\bk,e)$ for a maximal bad hinge of $F$. The number $\rho$ from \eqref{eqn:slope of hinge} is minimal such that:
\begin{equation}
\label{eqn:slope of f}
F \in \CA^+_{\leq \bm + \rho \bth}
\end{equation}
By the maximality of $(\bk,e)$, the component of $\Delta(F)$ in degree \eqref{eqn:deg hinge} is given by:

\begin{multline}
\label{eqn:hinge cop 0}
\Delta_{(\bn-\bk,d-e), (\bk,e)}(F) = \\ = \text{top} \left[ \frac {h_{\bk} F(\dots, z_{i1},\dots, z_{i,n_i - k_i} \otimes \xi z_{i,n_i - k_i+1},\dots, \xi z_{in_i},\dots)}{\gamma \cdot \prod^{i \in I}_{1\leq a \leq n_i - k_i} \prod^{j \in I}_{n_j - k_j < b \leq n_j} \left( \frac {\xi z_{jb}}{z_{ia}} \right)^{\#_{\oji}}} \right]
\end{multline}
where ``top$[\dots]$" refers to the top coefficient in $\xi$ of the expression marked by the ellipsis, and $\gamma \in \BF^\times$. The reason for the formula above is that the maximality of $(\bk,e)$ implies that only the leading term of the $h$ power series in the numerator (respectively the $\zeta$ rational functions in the denominator) of \eqref{eqn:cop plus} can contribute (see \eqref{eqn:cop proof}). By writing $F$ as a linear combination of monomials, we have:
\begin{equation}
\label{eqn:hinge cop 1}
\Delta_{(\bn-\bk,d-e), (\bk,e)}(F) = \sum_{s \in S} h_{\bk} F_{1,s} \otimes F_{2,s}
\end{equation}
where $S$ is some indexing set, and $F_{1,s}, F_{2,s}$ denote various elements in $\CA^+$ that one obtains by summing up the various top coefficients in $\xi$ of \eqref{eqn:hinge cop 0}. \\


\begin{claim}
\label{claim:hinge}

The following element: 
\begin{equation}
\label{eqn:g}
G = \sum_{s \in S} F_{1,s} F_{2,s}
\end{equation}
lies in the image of $\phi_{\bn,d}$. All its bad hinges are less than or equal to $(\bk,e)$, and:
\begin{equation}
\label{eqn:gamma}
\Delta_{(\bn-\bk,d-e), (\bk,e)}(F) = \gamma' \cdot \Delta_{(\bn-\bk,d-e), (\bk,e)}(G)
\end{equation}
for some $\gamma' \in \BF^\times$. \\

\end{claim}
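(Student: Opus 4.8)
\textbf{Proof proposal for Claim \ref{claim:hinge}.}

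The plan is to analyze $G = \sum_{s \in S} F_{1,s} F_{2,s}$ by relating it to the coproduct of $F$ via the structure of the slope subalgebras. First I would observe that, by Proposition \ref{prop:cop slope} applied to the right tensor factor, each $F_{2,s}$ appearing in \eqref{eqn:hinge cop 1} has slope $\leq \rho$ (where $\rho$ is the slope of the maximal bad hinge $(\bk,e)$), and in fact, by the construction of \eqref{eqn:hinge cop 0} as a leading-order term, has naive slope exactly $\rho$; hence $F_{2,s} \in \CB^+_{\bm + \rho\bth}$. Simultaneously, $F_{1,s}$ has slope $\leq \rho$ by \eqref{eqn:slope of f} and the multiplicativity of $\Delta$ (the left factor of $\Delta(F)$ inherits slope $\leq \rho$ from $F$, since $F \in \CA^+_{\leq \bm + \rho\bth}$ and this is an algebra containing all such left factors). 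Since $F_{1,s}$ has horizontal degree $\bn - \bk$, which is strictly smaller than $\bn$ (as $(\bk,e)$ is a bad hinge with $\bk \neq \b0$; the case $\bk = \bn$ is excluded because then $e = d$ would force $\rho = r$, contradicting badness), the inductive hypothesis of Proposition \ref{prop:surj} applies: $F_{1,s} \in \CA^+_{\leq \bm + \rho\bth}$ lies in the image of the corresponding $\phi$, hence so does each product $F_{1,s} F_{2,s}$, and therefore $G$ lies in the image of $\phi_{\bn,d}$.

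Next I would prove \eqref{eqn:gamma}, which is the computational heart of the claim. The idea is that $\Delta_{(\bn - \bk, d-e),(\bk,e)}$ is a "leading term" extraction, and one computes it for $G = \sum_s F_{1,s} F_{2,s}$ using the fact (from the definition of $\Delta_{\bm+\rho\bth}$ in Subsection \ref{sub:delta m}, together with Proposition \ref{prop:compatible 1} and the coproduct formula \eqref{eqn:cop b plus}) that $\Delta_{\bm+\rho\bth}$ is multiplicative on $\CB^+_{\bm+\rho\bth}$. Since each $F_{2,s}$ is homogeneous of degree $(\bk, e)$ and lies in $\CB_{\bm+\rho\bth|\bk}$, its leading coproduct term in the relevant bidegree is $h_{\bk}F_{2,s} \otimes 1 + \dots + 1 \otimes F_{2,s}$, and likewise $F_{1,s}$ contributes its full weight to the first factor; extracting the component in $\CA_{\bn-\bk,d-e}\otimes\CA_{\bk,e}$ from $\Delta(\sum_s F_{1,s}F_{2,s})$ picks out precisely $\sum_s (\text{something})\, h_{\bk} F_{1,s} \otimes F_{2,s}$, matching \eqref{eqn:hinge cop 1} up to the scalars $\gamma, \gamma'$ coming from the normalizations of the $\zeta$-functions and the $h$-series in \eqref{eqn:hinge cop 0} versus the shuffle coproduct \eqref{eqn:cop plus}. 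The key point making this work is the maximality of $(\bk, e)$: it guarantees there are no "cross terms" of the same bidegree coming from subdividing $F_{1,s}$ or from non-leading pieces, which is exactly what was used to justify \eqref{eqn:hinge cop 0}.

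Finally, the statement that all bad hinges of $G$ are $\leq (\bk,e)$ in the partial order follows from the slope bounds established above: every tensor factor in $\Delta(F_{1,s}F_{2,s}) = \Delta(F_{1,s})\Delta(F_{2,s})$ has, in its right component, naive slope bounded by $\max$ of the naive slopes of the right components of $\Delta(F_{1,s})$ and $\Delta(F_{2,s})$; since $F_{1,s}$ has slope $< \rho$ in its bad hinges (being of strictly smaller horizontal degree, its maximal bad hinge, if any, is strictly below $(\bk,e)$ — here I would need to check that the inductive construction keeps $F_{1,s}$ honestly of slope $\leq\rho$ with any bad hinge strictly dominated) and $F_{2,s} \in \CB^+_{\bm+\rho\bth}$ has no bad hinges of slope $> \rho$ and none of slope $\rho$ with $|\bk'| > |\bk|$ (its horizontal degree is exactly $\bk$), the products cannot produce a bad hinge exceeding $(\bk,e)$. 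The main obstacle I anticipate is the bookkeeping in \eqref{eqn:gamma}: one must carefully track how the normalization factor $\prod (\xi z_{jb}/z_{ia})^{\#_{\oji}}$ and the leading term of $\prod h_j^+(z_{jb})$ in \eqref{eqn:cop plus} interact with the shuffle product $F_{1,s} * F_{2,s}$ — in particular verifying that the extra $\zeta_{ij}$ factors introduced by the shuffle product \eqref{eqn:shuf prod} contribute only an overall nonzero scalar in the relevant leading order, and do not mix bidegrees. This is the only place where a genuine (though routine) computation with the explicit formulas is required.
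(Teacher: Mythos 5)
Your overall strategy is the same as the paper's (identify $F_{2,s} \in \CB_{\bm+\rho\bth|\bk}$, control the slope of the $F_{1,s}$, invoke the induction hypothesis for membership in $\text{Im }\phi_{\bn,d}$, and compute $\Delta(G)$ by multiplicativity), but there is a genuine gap at the single most delicate point: the slope of the first tensor factors. You assert that ``$F_{1,s}$ has slope $\leq \rho$'' because ``the left factor of $\Delta(F)$ inherits slope $\leq \rho$ from $F$'' --- this is not what Proposition \ref{prop:cop slope} says. That proposition constrains only the \emph{right} tensor factors of $\Delta(F)$; the left factors are literally labelled ``anything'', and the degree bookkeeping in \eqref{eqn:cop proof} gives a \emph{lower} bound on $\vdeg F_1$, which is the wrong direction for a slope-$\leq$ statement. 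What the argument actually requires, and what the paper proves as Claim \ref{claim:final} via a monomial-by-monomial degree count exploiting the strictness of \eqref{eqn:inequality} for $|\bl|>|\bk|$ (which is where the maximality of the hinge enters), is the \emph{strict} bound: every $F_{1,t}$ has slope $<\rho$. You flag this yourself in your last paragraph (``here I would need to check\dots''), but the check is not routine --- it is the heart of the proof.

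The strict inequality is not a refinement one can defer: without it, both of the remaining assertions fail. If some $\Delta(F_{1,s})$ had a right tensor factor $X_2 \neq 1$ of naive slope exactly $\rho$, then $X_2 \cdot F_{2,s}$ would be a right tensor factor of $\Delta(G)$ of naive slope $\rho$ and horizontal degree strictly larger than $\bk$ --- a bad hinge of $G$ \emph{strictly greater} than $(\bk,e)$ in the partial order (slope first, then $|\bk|$), contradicting the second assertion of the Claim; and such cross terms could also contaminate the $(\bn-\bk,d-e)\otimes(\bk,e)$ component, breaking \eqref{eqn:gamma}. Your appeal to ``the maximality of $(\bk,e)$'' to rule out cross terms conflates a property of $F$ (used to justify \eqref{eqn:hinge cop 0}) with the needed property of $G$, which must be derived from the slope bounds on the $F_{1,t}$ and $F_{2,t}$ separately. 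To repair the proposal you would need to supply the analogue of Claim \ref{claim:final}: fix a basis $\{F_{2,t}\}$ of $\CB_{\bm+\rho\bth|\bk}$, and show by analyzing the exponents of the monomials of $F$ surviving the limit in \eqref{eqn:hinge cop 0} that the corresponding $F_{1,t}$ satisfy the strict version of \eqref{eqn:slope plus} with respect to $\bm+\rho\bth$.
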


\noindent Let us complete the induction step of the surjectivity of $\phi_{\bn,d}$. Claim \ref{claim:hinge} allows us to reduce the fact that $F$ lies in the image of $\phi_{\bn,d}$ to the analogous fact for $F - \gamma' G$, where $\gamma'$ is the constant that features in \eqref{eqn:gamma}. Moreover, Claim \ref{claim:hinge} implies that $F-\gamma' G$ does not in fact have a bad hinge at $(\bk,e)$. Thus, repeating this argument finitely many times allows us to reduce $F$ to an element without any bad hinges, which as we have seen must lie in $\CB_{\bm+r\bth|\bn}$. This concludes the induction step. \\

\begin{proof} \emph{of Claim \ref{claim:hinge}:} To eliminate redundancy in the sum \eqref{eqn:hinge cop 1}, we will assume the various $F_{1,s}$ which appear are part of a fixed linear basis of $\CA_{\bn - \bk, d - e}$. Then \eqref{eqn:slope of f} together with the last sentence of Proposition \ref{prop:cop slope} imply that $F_{2,s}$ has slope $\leq \rho$ for all $s \in S$. Because $F_{2,s}$ has naive slope $ = \rho$ by \eqref{eqn:slope of hinge}, we conclude that: 
\begin{equation}
\label{eqn:f2}
F_{2,s} \in \CB_{\bm+\rho\bth|\bk}
\end{equation}
for all $s \in S$. Let us now express every $F_{2,s}$ in terms of a fixed linear basis:
$$
\{F_{2,t}\}_{t \in T} \quad \text{of} \quad \CB_{\bm+\rho\bth|\bk}
$$
and then re-express \eqref{eqn:hinge cop 1} in this new basis:
\begin{equation}
\label{eqn:hinge cop 2}
\Delta_{(\bn-\bk,d-e), (\bk,e)}(F) = \sum_{t \in T} h_{\bk} F_{1,t} \otimes F_{2,t}
\end{equation}

\begin{claim}
\label{claim:final}

Every $F_{1,t}$ which appears in \eqref{eqn:hinge cop 2} has slope $<\rho$. \\

\end{claim}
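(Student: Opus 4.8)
The plan is to argue by contradiction, exploiting the maximality of the bad hinge $(\bk,e)$. Suppose some $F_{1,t_0}$ occurring in \eqref{eqn:hinge cop 2} fails to have slope $<\rho$. By Proposition \ref{prop:cop slope}, together with a standard discreteness remark (since $\bth \in \qqp$ and the graded pieces are finite-dimensional, the naive slopes $<\rho$ appearing among the second tensor factors of $\Delta(F_{1,t_0})$ have a maximum strictly below $\rho$), this failure is equivalent to the existence of a summand $G_1 \otimes G_2$ of $\Delta(F_{1,t_0})$ in which $G_2 \in \CA^+$ is non-scalar, of some horizontal degree $\bk' \neq \b0$ and vertical degree $e'$ with naive slope $\geq \rho$, i.e. $e' \geq (\bm + \rho\bth)\cdot \bk'$. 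Among all such summands I would fix one for which $(\bk',e')$ is largest, maximizing first the naive slope $(e'-\bm\cdot\bk')/(\bth\cdot\bk')$ and then $|\bk'|$; note $\bk' \leq \bn - \bk$ because $\hdeg F_{1,t_0} = \bn-\bk$.

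The crux is to manufacture, out of this datum, a bad hinge of $F$ that strictly dominates $(\bk,e)$. I would do so via coassociativity of $\Delta$, namely $(\Delta \otimes \textrm{Id})\circ\Delta = (\textrm{Id}\otimes\Delta)\circ\Delta$ applied to $F$, comparing the two sides in the trigraded component whose third tensor factor has degree $(\bk,e)$ with trivial Cartan part, whose middle tensor factor has degree $(\bk',e')$ with Cartan part $h_{\bk}$, and whose first tensor factor has degree $(\bn-\bk-\bk', d-e-e')$ with Cartan part $h_{\bk+\bk'}$. On the $(\Delta\otimes\textrm{Id})$ side this component picks up (up to a nonzero scalar coming from \eqref{eqn:rel ext plus}) the term obtained by applying $\Delta$ to the first leg of the summand $h_{\bk} F_{1,t_0}\otimes F_{2,t_0}$ of \eqref{eqn:hinge cop 2} and retaining only its $G_1\otimes G_2$ part. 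On the $(\textrm{Id}\otimes\Delta)$ side, any summand $A\otimes B$ of $\Delta(F)$ that can contribute must have $B$ of horizontal degree $\bk+\bk'$ and vertical degree $e+e'$; hence, \emph{provided this component is nonzero}, $\Delta(F)$ has a nonzero component in $\CA_{\bn-\bk-\bk', d-e-e'}\otimes\CA_{\bk+\bk',e+e'}$, i.e. $(\bk+\bk', e+e')$ is a hinge of $F$ in the sense of \eqref{eqn:hinge}--\eqref{eqn:deg hinge}.

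Granting this, the contradiction is immediate. The hinge $(\bk+\bk', e+e')$ is bad, because $\rho>r$ and $\bth\in\qqp$ give $\bth\cdot(\bk+\bk')>0$ and therefore $e+e' \geq (\bm+\rho\bth)\cdot(\bk+\bk') > (\bm+r\bth)\cdot(\bk+\bk')$. Its slope is $\geq\rho$ by the same lower bound, and $\leq\rho$ since $F\in\CA^+_{\leq\bm+\rho\bth}$ forces — via \eqref{eqn:cop slope plus} — every hinge $(\bl,f)$ of $F$ to satisfy $f\leq(\bm+\rho\bth)\cdot\bl$; hence its slope is exactly $\rho$. Consequently $e'=(\bm+\rho\bth)\cdot\bk'$ and $|\bk+\bk'|=|\bk|+|\bk'|>|\bk|$ (as $\bk'\neq\b0$), so $(\bk+\bk',e+e')$ strictly dominates $(\bk,e)$ in the partial order on bad hinges — first by slope, then by $|\cdot|$. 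This contradicts the maximality of $(\bk,e)$, and hence no such $F_{1,t_0}$ exists.

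The step I expect to be the main obstacle is the nonvanishing of this trigraded component, i.e. ruling out cancellation on the $(\Delta\otimes\textrm{Id})$ side. I would settle it by a leading-order computation: express $\Delta_{(\bn-\bk,d-e),(\bk,e)}(F)$ through the $\xi\to\infty$ scaling limit \eqref{eqn:hinge cop 0} — which is precisely where the maximality of $(\bk,e)$ already entered, discarding the non-leading terms of \eqref{eqn:cop plus} — and then apply the analogous scaling limit in a fresh variable $\eta$ that splits off the $\bk'$-variables of $F_{1,t_0}$. Because $G_2$ was chosen of maximal naive slope and then maximal $|\bk'|$, the term built from $G_1\otimes G_2$ should be the unique top-order contribution in $\eta$, and therefore cannot be cancelled by the remaining summands of $(\Delta\otimes\textrm{Id})\Delta(F)$. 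Carrying out this iterated-limit bookkeeping while keeping track of the Cartan decorations is the one genuinely computational piece of the argument.
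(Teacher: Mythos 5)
Your argument is correct in substance, but it takes a different (more structural) route than the paper, which proves Claim \ref{claim:final} by a direct computation: it applies the defining inequality \eqref{eqn:slope plus} of ``$F$ has slope $\leq \rho$'' to the set of $\bk+\bk'$ variables, observes that this inequality must be \emph{strict} whenever $|\bl|>|\bk|$ (else there would be a bad hinge of slope $\rho$ and larger $|\cdot|$, contradicting maximality of $(\bk,e)$), and subtracts the exact degree \eqref{eqn:inequality 2} of the surviving monomials in the scaling limit \eqref{eqn:hinge cop 0} to land on the slope bound \eqref{eqn:inequality 3} for $F_{1,t}$. Your proof by contradiction via coassociativity exploits exactly the same mechanism --- a failure of the claim would manufacture a bad hinge $(\bk+\bk',e+e')$ of slope $\rho$ with $|\bk+\bk'|>|\bk|$ --- but packages it abstractly rather than through monomial bookkeeping; the degree/Cartan accounting in your trigraded comparison checks out, and the conclusion that the new hinge has slope exactly $\rho$ (hence strictly dominates $(\bk,e)$) is right.

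The one step you flag as the ``main obstacle'' --- nonvanishing of the trigraded component on the $(\Delta\otimes\textrm{Id})$ side --- does not require the iterated-limit computation you propose, and as sketched that computation is the weakest link (uniqueness of a ``top-order term in $\eta$'' is not obvious when several second factors share the maximal naive slope and $|\bk'|$). It closes much more simply: the component of $(\Delta\otimes\textrm{Id})\Delta(F)$ whose third slot has degree $(\bk,e)$ is $\sum_{t\in T}(h_{\bk}\otimes h_{\bk})\Delta(F_{1,t})\otimes F_{2,t}$ by \eqref{eqn:hinge cop 2}, and since the $\{F_{2,t}\}_{t\in T}$ were chosen to be a \emph{basis} of $\CB_{\bm+\rho\bth|\bk}$ (this is precisely why the paper re-expresses \eqref{eqn:hinge cop 1} as \eqref{eqn:hinge cop 2}), the further projection onto middle-slot degree $(\bk',e')$ is nonzero as soon as the single component $[\Delta(F_{1,t_0})]_{*,(\bk',e')}$ is nonzero --- which it is, provided you take $G_1\otimes G_2$ to mean the full (nonzero) bidegree component of $\Delta(F_{1,t_0})$ rather than an individual summand. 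With that observation your maximality choice of $(\bk',e')$ also becomes unnecessary, and the proof is complete.
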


\noindent By Claim \ref{claim:final} and the induction hypothesis of the surjectitivity of the map \eqref{eqn:pbw deg}:
$$
G = \sum_{t \in T} F_{1,t} F_{2,t}
$$
is a sum of products of elements of $\{\CB_{\bm+r\bth}\}_{r \leq \rho}$ in increasing order of $r$. This implies that $G \in \text{Im }\phi_{\bn,d}$. To compute the bad hinges of $G$, we note that:
$$
\Delta(G) = \sum_{t \in T} \underbrace{\Delta(F_{1,t})}_{X_1 \otimes X_2} \underbrace{\Delta(F_{2,t})}_{Y_1 \otimes Y_2}
$$
Since every $F_{1,t}$ has slope $<\rho$ and every $F_{2,t}$ has slope $=\rho$, then every $X_2$ that appears in the formula above has naive slope $<\rho$ and every $Y_2$ has naive slope $\leq \rho$. But unless $X_2 = 1$ and $Y_2 = F_{2,t}$, the product $X_2Y_2$ either has naive slope $<\rho$, or it has naive slope $= \rho$ but smaller $|\text{hdeg}|$ than $|\bk|$, and thus cannot contribute to the component of $\Delta(G)$ in degree \eqref{eqn:deg hinge}. Thus, we have:
$$
\Delta_{(\bn-\bk,d-e),(\bk,e)}(G) = \sum_{t\in T} (F_{1,t} \otimes 1) \left( h_{\bk} \otimes F_{2,t} \right)
$$
This matches \eqref{eqn:hinge cop 1} up to an overal constant that one obtains when commuting $h_{\bk}$ past the various $F_{1,t}$ (this constant can be read off from \eqref{eqn:rel ext plus}, and it only depends on $\bk$ and the horizontal degree of the $F_{1,t}$'s, which is equal to $\bn-\bk$ for all $t \in T$). \\


\begin{proof} \emph{of Claim \ref{claim:final}:} By \eqref{eqn:slope plus}, the fact that $F$ has slope $\leq \rho$, see \eqref{eqn:slope of f}, we have:
\begin{equation}
\label{eqn:inequality}
\text{total degree of }F \text{ in }\{z_{i1},\dots,z_{il_i}\}_{i\in I}  \leq (\bm+\rho \bth)\cdot \bl + \langle \bl, \bn-\bl \rangle
\end{equation}
for all $\b0 \leq \bl \leq \bn$. The inequality is strict if $|\bk| < |\bl|$, on account of the maximality of the hinge $(\bk,e)$. By the symmetry of the Laurent polynomial $F$, the same inequality holds if we replace the set of variables $z_{i1},\dots,z_{il_i}$ by any other subset of $l_i$ of the variables $z_{i1},\dots,z_{in_i}$. Let us zoom in on a certain monomial $\mu$ that appears in the Laurent polynomial $F$, and write for all $\b0 < \bk' \leq \bn - \bk$:
\begin{align*}
&\alpha = \text{total degree of }\mu \text{ in } \{z_{i1},\dots,z_{ik'_i}\}_{i \in I} \\
&\beta = \text{total degree of }\mu \text{ in } \{z_{i,n_i-k_i+1},\dots,z_{in_i}\}_{i \in I} 
\end{align*}
By applying \eqref{eqn:inequality} for $\bl= \bk+\bk'$, we conclude that:
\begin{equation}
\label{eqn:inequality 1}
\alpha + \beta < (\bm+\rho \bth)\cdot (\bk+\bk') + \langle \bk+\bk', \bn-\bk-\bk' \rangle
\end{equation}
On the other hand, if the monomial $\mu$ survives in the limit \eqref{eqn:hinge cop 0}, this implies:
\begin{equation}
\label{eqn:inequality 2}
\beta = (\bm+\rho \bth)\cdot \bk + \langle \bk, \bn-\bk \rangle
\end{equation}
Subtracting \eqref{eqn:inequality 2} from \eqref{eqn:inequality 1} yields:
\begin{equation}
\label{eqn:inequality 3}
\alpha < (\bm+\rho \bth)\cdot \bk' + \langle \bk', \bn-\bk-\bk' \rangle - \langle \bk, \bk' \rangle
\end{equation}
However, the homogeneous degree of the first tensor factor of \eqref{eqn:hinge cop 0} in the variables $\{z_{i1},\dots,z_{ik'_i}\}_{i\in I}$ is equal to $\alpha + \langle \bk, \bk' \rangle$. By inequality \eqref{eqn:inequality 3}, this quantity is: 
$$
< (\bm+\rho \bth)\cdot \bk' + \langle \bk', \bn-\bk-\bk' \rangle
$$
According to $\eqref{eqn:slope plus}$, this precisely means that $F_{1,t}$ has slope $< \rho$ for all $t \in T$. 

\end{proof}

\end{proof}

\end{proof}

\subsection{}

We are now ready to prove that the pairing \eqref{eqn:pairing sub} is non-degenerate, which is a necessary hypothesis when constructing the Drinfeld double  \eqref{eqn:double b}. \\

\begin{proposition}
\label{prop:non deg}

For any $\bm \in \qq$, the pairing \eqref{eqn:pairing sub} is non-degenerate. \\

\end{proposition}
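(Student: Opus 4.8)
The plan is to derive the non-degeneracy of \eqref{eqn:pairing sub} from that of the ambient pairing \eqref{eqn:pairing positive} on $\CA^+ \otimes \CA^-$ (established in \cite[Proposition 3.3]{N}), using as the two crucial inputs the surjectivity of Proposition \ref{prop:surj} and the multiplicativity of the pairing recorded in Proposition \ref{prop:pbw pairing}. First I would fix an auxiliary $\bth \in \qqp$; since $\bth \cdot \bn \neq 0$ for every $\bn \in \nn \setminus \{\b0\}$, each homogeneous degree belongs to $\CB^\pm_{\bm+r\bth}$ for exactly one $r \in \BQ$, and I will freely use the resulting identification of slopes with rational numbers. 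By the symmetry between $\CA^+$ and $\CA^-$, it suffices to prove that no nonzero $F \in \CB_{\bm|\bn}$ pairs trivially with all of $\CB_{\bm|-\bn}$; the piece $\bn = \b0$ equals $\BF$ on both sides, so I may assume $\bn \neq \b0$.

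Assume for contradiction that $0 \neq F \in \CB_{\bm|\bn}$ satisfies $\langle F, \CB_{\bm|-\bn}\rangle = 0$. Because \eqref{eqn:pairing positive} is non-degenerate and pairs non-trivially only elements of opposite degree, there is $G \in \CA_{-\bn,-\bm\cdot\bn}$ with $\langle F, G \rangle \neq 0$. By Proposition \ref{prop:surj} in its $\pm = -$ form (together with the remark following it), $\CA^-$ is spanned by products $c_{r_1} \cdots c_{r_k}$ with $r_1 < \dots < r_k$ and $c_{r_i} \in \CB^-_{\bm + r_i\bth}$; since the multiplication map respects the $\nn \times \BZ$-grading, I can choose such a presentation $G = \sum_\alpha G_\alpha$ in which each $G_\alpha = c^\alpha_{r^\alpha_1} \cdots c^\alpha_{r^\alpha_{k_\alpha}}$ is homogeneous of degree $(-\bn, -\bm\cdot\bn)$. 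Viewing $F$ as the ordered product $\prod_r^{\rightarrow} a_r$ with $a_0 = F$ and $a_r = 1$ for $r \neq 0$, and $G_\alpha$ as $\prod_r^{\rightarrow} b_r$ with $b_{r^\alpha_i} = c^\alpha_{r^\alpha_i}$ and $b_r = 1$ otherwise, Proposition \ref{prop:pbw pairing} gives
\[
\langle F, G_\alpha \rangle = \langle F, b_0 \rangle \cdot \prod_{r \neq 0} \langle 1, b_r \rangle .
\]
Since $1 \in \CA^+$ has degree $\b0$, the factor $\langle 1, b_r \rangle$ vanishes unless $b_r$ is a scalar; hence $\langle F, G_\alpha \rangle = 0$ whenever $G_\alpha$ has a non-scalar factor of nonzero slope. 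In the opposite case every non-scalar factor of $G_\alpha$ has slope $0$, so $G_\alpha$ is a scalar multiple of an element of $\CB^-_{\bm}$, which by homogeneity (and $\bn \neq \b0$) lies in $\CB_{\bm|-\bn}$, and again $\langle F, G_\alpha \rangle = 0$ by the hypothesis on $F$. Summing over $\alpha$ yields $\langle F, G \rangle = 0$, a contradiction; the right non-degeneracy follows by the same argument with $\CA^+$ and $\CA^-$ interchanged.

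The argument is short precisely because the real work has been front-loaded into Propositions \ref{prop:surj} and \ref{prop:pbw pairing}; the main conceptual obstacle is the latter, which is exactly what guarantees that the ambient pairing does not mix distinct slope components, so that a slope-$\bm$ element like $F$ can only detect the slope-$\bm$ part of a product-decomposition of $G$. Within the present deduction, the one point that requires a little care is the bookkeeping with the grading: choosing each $G_\alpha$ homogeneous of the correct degree is what forces the extraneous factors to be scalars and thereby isolates the contribution of $\CB_{\bm|-\bn}$. Finally, there is no circularity, since Propositions \ref{prop:surj} and \ref{prop:pbw pairing} rely only on the coproduct of $\CA$ and the non-degeneracy of \eqref{eqn:pairing positive}, not on that of \eqref{eqn:pairing sub}.
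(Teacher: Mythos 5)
Your argument is correct and is essentially the paper's own proof: both deduce the statement from the non-degeneracy of the ambient pairing \eqref{eqn:pairing positive}, the surjectivity of the multiplication map \eqref{eqn:pbw infinite} (Proposition \ref{prop:surj}), and the factorization of the pairing across slopes (Proposition \ref{prop:pbw pairing}), which forces a slope-$\bm$ element to pair trivially with every ordered product having a non-scalar factor of slope $\neq 0$. The only difference is presentational — you phrase it as a contradiction with explicit degree bookkeeping, while the paper states the same implication chain directly.
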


\begin{proof} Because the pairing \eqref{eqn:pairing positive} is non-degenerate, we have:
$$
\langle F, \CA^-\rangle = 0 \quad \Rightarrow \quad F = 0
$$
However, the surjectivity of the map \eqref{eqn:pbw infinite} allows us to write:
$$
\left \langle F, \bigotimes_{r \in \BQ}^{\rightarrow} \CB_{\bm+r\bth}^- \right \rangle = 0 \quad \Rightarrow \quad F = 0
$$
if $F \in \CB_{\bm}^+$, then Proposition \ref{prop:pbw pairing} implies that $F$ pairs trivially with all ordered products of elements from different $\CB_{\bm+r\bth}^-$'s, and only pairs non-trivially with $\CB_{\bm}^-$ itself. We conclude that:
$$
\left \langle F, \CB_{\bm}^- \right \rangle = 0 \quad \Rightarrow \quad F = 0
$$
which is precisely the non-degeneracy of \eqref{eqn:pairing sub} in the first factor. The case of non-degeneracy in the second factor is completely analogous.

\end{proof}

\begin{proposition}
\label{prop:inj}

For any $\bm \in \qq$ and $\bth\in \qqp$, the map \eqref{eqn:pbw infinite} is injective. \\

\end{proposition}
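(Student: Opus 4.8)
The plan is to deduce injectivity of the multiplication map \eqref{eqn:pbw infinite} from its surjectivity (already established in Proposition \ref{prop:surj}) together with the pairing identity of Proposition \ref{prop:pbw pairing} and the non-degeneracy of the Hopf pairings. First I would reduce to the $\pm = +$ case and work one graded piece at a time: fix $(\bn, d) \in \nn \times \BZ$, and let $r$ be the unique rational number with $(\bm + r\bth) \cdot \bn = d$ (using $\bth \in \qqp$, as in the proof of Proposition \ref{prop:surj}). The source of \eqref{eqn:pbw infinite} in degree $(\bn, d)$ is the finite-dimensional space $\bigoplus \bigotimes_{r' \in \BQ}^{\rightarrow} \CB_{\bm + r'\bth | \bn_{r'}}$, the sum being over decompositions $\bn = \sum \bn_{r'}$ with $\sum (\bm + r'\bth) \cdot \bn_{r'} = d$; note there are only finitely many such decompositions, and each summand is finite-dimensional. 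Since the target $\CA_{\bn, d}$ is also finite-dimensional and the map is surjective, it suffices to show the source and target have the same dimension, i.e.\ that the map is injective.

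The key step is to pair the two sides. By Proposition \ref{prop:non deg}, each pairing $\langle \cdot, \cdot \rangle : \CB_{\bm+r'\bth}^+ \otimes \CB_{\bm+r'\bth}^- \to \BF$ is non-degenerate, hence perfect on each finite-dimensional graded piece. For each decomposition $\bn = \sum \bn_{r'}$ as above, choose dual bases of $\CB_{\bm+r'\bth|\bn_{r'}}^+$ and $\CB_{\bm+r'\bth|-\bn_{r'}}^-$. Taking ordered products of basis elements (in increasing order of $r'$) gives a spanning set $\{A_\alpha\}$ of the source of \eqref{eqn:pbw infinite} (for $\CA^+$) in degree $(\bn,d)$, and likewise a spanning set $\{B_\beta\}$ of the corresponding source for $\CA^-$ in degree $(-\bn,-d)$. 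By Proposition \ref{prop:pbw pairing}, $\langle A_\alpha, B_\beta \rangle$ equals the product of the pairings of the corresponding factors, which (by the choice of dual bases and the fact that the pairing vanishes between pieces of distinct horizontal degree) is $1$ if $\alpha = \beta$ and $0$ otherwise. Hence the Gram matrix of $\{A_\alpha\}$ against $\{B_\beta\}$, under the pairing $\langle \cdot, \cdot \rangle : \CA^+ \otimes \CA^- \to \BF$ pulled back via the (surjective) multiplication maps, is the identity.

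This identity Gram matrix forces the $\{A_\alpha\}$ to be linearly independent in the source: any linear relation $\sum c_\alpha A_\alpha = 0$ would, upon pairing with $B_\beta$, give $c_\beta = 0$. Therefore $\{A_\alpha\}$ is a basis of the source in degree $(\bn,d)$, and since multiplication sends it to a spanning set of $\CA^+_{\bn,d}$ (by surjectivity) with the same cardinality as $\dim \CA^+_{\bn,d}$ (because the pulled-back pairing with $\{B_\beta\}$ is already perfect on $\CA^+_{\bn,d} \otimes \CA^-_{-\bn,-d}$, whose dimension it computes), the multiplication map is a bijection in each degree, hence injective. The case $\pm = -$ is identical with the roles of the two factors exchanged.

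The main obstacle is the bookkeeping in the second paragraph: one must be careful that the ordered products $\{A_\alpha\}$ really do span the source — this is a consequence of how the tensor product $\bigotimes^{\rightarrow}$ is defined — and that Proposition \ref{prop:pbw pairing} applies verbatim to products of basis elements from the graded pieces $\CB_{\bm+r'\bth|\bn_{r'}}$, which requires only that these elements have pure horizontal degree, as they do. Once the Gram matrix is seen to be the identity, injectivity is immediate, so no further estimates of the type appearing in the proof of Proposition \ref{prop:surj} are needed here.
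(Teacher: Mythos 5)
Your proposal is correct and follows essentially the same route as the paper: fix dual bases of each $\CB_{\bm+r\bth}^{\pm}$ (using Proposition \ref{prop:non deg}), observe via Proposition \ref{prop:pbw pairing} that the ordered products of these basis elements pair to the identity Gram matrix, and conclude that no nontrivial linear relation among the images can exist. The additional dimension-counting framing via surjectivity is harmless but not needed, since the orthonormality argument already gives injectivity directly.
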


\noindent In particular, the Proposition implies that the maps \eqref{eqn:pbw finite} are injective for all $p \in \BQ$. \\

\begin{proof} By Proposition \ref{prop:non deg}, we may fix dual linear bases:
\begin{equation}
\label{eqn:dual bases}
\{ a_{r,s} \}_{s \in \BN} \subset \CB_{\bm+r\bth}^+ \qquad \text{and} \qquad \{ b_{r,s} \}_{s \in \BN} \subset \CB_{\bm+r\bth}^-
\end{equation}
for all $r \in \BQ$. We will assume $a_{r,0} = b_{r,0} = 1$ is an element in our bases. If the map \eqref{eqn:pbw infinite} (we assume $\pm = +$, as the case of $\pm = -$ is analogous) failed to be injective, then there would exist a non-trivial linear relation:
\begin{equation}
\label{eqn:non-trivial}
\sum_{\{s_r\}_{r \in \BQ}} \gamma_{\{s_r\}} \prod_{r \in \BQ}^{\rightarrow} a_{r,s_r} = 0 \in \CA^+
\end{equation}
(the sum goes over all collections of indices $s_r$, almost all of which are equal to 0). For any fixed collection of indices $\{t_r\}_{r \in \BQ}$, almost all of which are equal to 0, the relation above implies:
$$
\sum_{\{s_r\}_{r \in \BQ}} \gamma_{\{s_r\}} \left \langle \prod_{r \in \BQ}^{\rightarrow} a_{r,s_r}, \prod_{r \in \BQ}^{\rightarrow} b_{r,t_r} \right \rangle = 0 \in \CA^+
$$
By Proposition \ref{prop:pbw pairing}, the only pairing which survives in the formula above is the one for $s_r = t_r, \forall r \in \BQ$, thus implying that $\gamma_{\{t_r\}} = 0$. Since this holds for all collections of indices $\{t_r\}_{r \in \BQ}$, this precludes the existence of a non-trivial relation \eqref{eqn:non-trivial} and establishes the injectivity of the map \eqref{eqn:pbw infinite}. 

\end{proof}

\begin{corollary}
\label{cor:iso}

For any $\bm \in \qq$, $\bth\in \qqp$ and $p\in \BQ$, the maps \eqref{eqn:pbw finite} and \eqref{eqn:pbw infinite} are isomorphisms. \\

\end{corollary}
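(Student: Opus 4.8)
The plan is to obtain Corollary \ref{cor:iso} by simply combining the surjectivity statement of Proposition \ref{prop:surj} with the injectivity statement of Proposition \ref{prop:inj}, so there is essentially no new conceptual content left to supply: all the genuine work has already been carried out.

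First I would dispatch the infinite version \eqref{eqn:pbw infinite}. Surjectivity is the remark immediately following Proposition \ref{prop:surj}: since $\bth \in \qqp$, every element of $\CA^\pm$ has slope $\leq \bm + r\bth$ for $r \in \BQ$ sufficiently large, so surjectivity of the finite maps \eqref{eqn:pbw finite} forces surjectivity of \eqref{eqn:pbw infinite}. Injectivity is exactly Proposition \ref{prop:inj}. Hence \eqref{eqn:pbw infinite} is a bijection; being a multiplication map, it furnishes the desired tensor-product (``PBW''-type) decomposition of $\CA^\pm$ as an $\BF$-vector space.

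Next I would handle the finite maps \eqref{eqn:pbw finite}, for which I would spell out that they are genuinely restrictions of \eqref{eqn:pbw infinite}. The domain $\bigotimes_{r \in \BQ_{\leq p}}^{\rightarrow} \CB_{\bm+r\bth}^\pm$ embeds into $\bigotimes_{r \in \BQ}^{\rightarrow} \CB_{\bm+r\bth}^\pm$ as the subspace on which every tensor factor indexed by $r > p$ is set equal to $1$, and on this subspace the two multiplication maps agree. Moreover the image lands inside $\CA^\pm_{\leq \bm+p\bth}$: for $r \leq p$ one has $\CB_{\bm+r\bth}^\pm \subset \CA^\pm_{\leq \bm+r\bth} \subset \CA^\pm_{\leq \bm+p\bth}$ (the second inclusion because $\bth \in \qqp$ makes $(p-r)\bth \cdot \bk \geq 0$ for all $\bk \geq \b0$), and $\CA^\pm_{\leq \bm+p\bth}$ is a subalgebra of $\CA^\pm$ by Proposition \ref{prop:cop slope} together with multiplicativity of $\Delta$. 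Thus injectivity of \eqref{eqn:pbw infinite} restricts to injectivity of \eqref{eqn:pbw finite} (this is the ``in particular'' already recorded after Proposition \ref{prop:inj}), while surjectivity onto $\CA^\pm_{\leq \bm+p\bth}$ is Proposition \ref{prop:surj}; hence \eqref{eqn:pbw finite} is bijective as well.

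The main obstacle has, in effect, already been overcome in Propositions \ref{prop:surj} and \ref{prop:inj}; the only point deserving a word of care in the write-up is the verification that \eqref{eqn:pbw finite} really is the restriction of \eqref{eqn:pbw infinite} with image in the subalgebra $\CA^\pm_{\leq \bm+p\bth}$, which is exactly what lets injectivity descend from the infinite map to the finite ones.
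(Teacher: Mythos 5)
Your proposal is correct and is exactly the argument the paper intends: surjectivity is Proposition \ref{prop:surj} (plus the remark following it for the infinite map), and injectivity is Proposition \ref{prop:inj}, with the finite case obtained by restriction just as in the paper's ``in particular'' remark. Nothing further is needed.
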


\noindent We have completed the construction of the isomorphisms \eqref{eqn:pbw}. As these isomorphisms preserve the pairing in the sense of Proposition \ref{prop:pbw pairing}, we have the following. \\

\begin{corollary}
\label{cor:r-matrix}

For any $\bm \in \qq$ and $\bth\in \qqp$, we have:
\begin{equation}
\label{eqn:factorization}
\CR' = \prod_{r \in \BQ}^{\rightarrow} \CR'_{\bm+r\bth}
\end{equation}
where $\CR'$ is defined in \eqref{eqn:univ r}, and $\CR'_{\bm}$ is defined in \eqref{eqn:univ r sub}. \\

\end{corollary}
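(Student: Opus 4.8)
## Proof proposal for Corollary \ref{cor:r-matrix}

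The plan is to deduce the factorization \eqref{eqn:factorization} directly from the pairing-preserving isomorphism of Corollary \ref{cor:iso}, exploiting the standard characterization of the universal $R$-matrix (or rather its off-diagonal part $\CR'$) as the canonical element of a non-degenerate Hopf pairing. First I would recall the definition: $\CR'$ is the canonical tensor of the pairing \eqref{eqn:pairing positive}, meaning that if $\{P_\alpha\} \subset \CA^+$ and $\{Q_\alpha\} \subset \CA^-$ are dual bases with respect to $\langle \cdot, \cdot \rangle$, then $\CR' = \sum_\alpha P_\alpha \otimes Q_\alpha$ (an element of the completed tensor product because the pairing is graded with finite-dimensional graded pieces). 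Similarly, $\CR'_{\bm+r\bth}$ is the canonical tensor of the pairing \eqref{eqn:pairing sub} for the slope subalgebra, which is non-degenerate by Proposition \ref{prop:non deg}.

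Next I would assemble a basis of $\CA^+$ compatible with the factorization. By Corollary \ref{cor:iso}, the multiplication map $\bigotimes^{\rightarrow}_{r \in \BQ} \CB^+_{\bm+r\bth} \xrightarrow{\sim} \CA^+$ is an isomorphism, so fixing for each $r$ a linear basis $\{a_{r,s}\}_{s}$ of $\CB^+_{\bm+r\bth}$ (with $a_{r,0}=1$, and all but one factor equal to $1$ in any given product), the ordered products $\prod^{\rightarrow}_{r} a_{r,s_r}$ form a basis of $\CA^+$. Using Proposition \ref{prop:non deg} I may choose the $\{a_{r,s}\}_s$ to be dual to a basis $\{b_{r,s}\}_s$ of $\CB^-_{\bm+r\bth}$ under the slope pairing \eqref{eqn:pairing sub}. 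Then Proposition \ref{prop:pbw pairing} gives
\begin{equation}
\Big\langle \prod^{\rightarrow}_{r} a_{r,s_r}, \prod^{\rightarrow}_{r} b_{r,t_r} \Big\rangle = \prod^{\rightarrow}_{r} \langle a_{r,s_r}, b_{r,t_r} \rangle = \prod_r \delta_{s_r,t_r},
\end{equation}
so that $\{\prod^{\rightarrow}_{r} a_{r,s_r}\}$ and $\{\prod^{\rightarrow}_{r} b_{r,t_r}\}$ are dual bases of $\CA^+$ and $\CA^-$ under \eqref{eqn:pairing positive}. Therefore
\begin{equation}
\CR' = \sum_{\{s_r\}} \Big(\prod^{\rightarrow}_{r} a_{r,s_r}\Big) \otimes \Big(\prod^{\rightarrow}_{r} b_{r,s_r}\Big),
\end{equation}
and since each factor $\CB^+_{\bm+r\bth} \otimes \CB^-_{\bm+r\bth}$ contributes $\CR'_{\bm+r\bth} = \sum_s a_{r,s} \otimes b_{r,s}$, the sum over all collections $\{s_r\}$ rearranges exactly into the ordered product $\prod^{\rightarrow}_{r \in \BQ} \CR'_{\bm+r\bth}$, where the product of tensors is taken in $\CA^{\otimes 2}$ by multiplying the first and second legs separately in increasing order of $r$. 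This is \eqref{eqn:factorization}.

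The one genuine point requiring care — and the main obstacle — is the manipulation of infinite sums and products in the completions $\CA^+ \widehat{\otimes} \CA^-$ and $\CB^\pm_{\bm+r\bth} \widehat{\otimes} \CB^\mp_{\bm+r\bth}$. I would handle this degree by degree: fix $(\bn, d) \in \nn \times \BZ$; then only finitely many $r \in \BQ$ can contribute a nonzero factor $\bn_r \neq \b0$ to a decomposition $\bn = \sum_r \bn_r$ with $d = \sum_r (\bm+r\bth)\cdot \bn_r$ (because the graded pieces $\CB_{\bm+r\bth|\bn_r}$ vanish unless $(\bm+r\bth)\cdot \bn_r \in \BZ$, and the $\bn_r$ must sum to $\bn$ with only finitely many nonzero terms, each constrained as in Proposition \ref{prop:surj}). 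Hence the degree-$(\bn,d)$ component of both sides of \eqref{eqn:factorization} is a finite sum, and the rearrangement is legitimate there; the ordered product $\prod^{\rightarrow}_{r\in\BQ} \CR'_{\bm+r\bth}$ is thus well-defined in the completion and matches $\CR'$ in every bidegree. Once the finiteness bookkeeping is set up, the identity is a formal consequence of the dual-basis computation above together with multiplicativity of the canonical tensor under the pairing-preserving factorization from Corollary \ref{cor:iso}.
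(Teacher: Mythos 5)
Your proposal is correct and follows essentially the same route as the paper: choose dual bases $\{a_{r,s}\}$, $\{b_{r,s}\}$ of each $\CB^\pm_{\bm+r\bth}$ via Proposition \ref{prop:non deg}, use Corollary \ref{cor:iso} and Proposition \ref{prop:pbw pairing} to show the ordered products form dual bases of $\CA^\pm$, and compare the resulting expression for the canonical tensor $\CR'$ with the product of the $\CR'_{\bm+r\bth}$. Your added degree-by-degree finiteness check for the completions is a correct elaboration of a point the paper leaves implicit.
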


\begin{proof} Let us consider dual bases \eqref{eqn:dual bases}. The canonical tensor of the pairing \eqref{eqn:pairing sub} (for $\bm$ replaced by $\bm+r\bth$) is:
\begin{equation}
\label{eqn:r1}
\CR'_{\bm+r\bth} = \sum_{s \in \BN} a_{r,s} \otimes b_{r,s}
\end{equation}
Meanwhile, by \eqref{eqn:pbw pairing} and \eqref{eqn:pbw}, we have that:
$$
\left\{ \prod_{r\in \BQ}^{\rightarrow} a_{r,s_r} \right\} \subset \CA^+ \qquad \text{and} \qquad \left\{ \prod_{r\in \BQ}^{\rightarrow} b_{r,s_r} \right\} \subset \CA^-
$$
(as $\{s_r\}_{r \in \BQ}$ goes over all collections of natural numbers, almost all of which are 0) are dual bases. Therefore, the canonical tensor of the pairing \eqref{eqn:pairing positive} is:
\begin{equation}
\label{eqn:r2}
\CR' = \sum_{\{s_r\}} \prod_{r\in \BQ}^{\rightarrow} a_{r,s_r}  \otimes \prod_{r\in \BQ}^{\rightarrow} b_{r,s_r} 
\end{equation}
Comparing formulas \eqref{eqn:r1} and \eqref{eqn:r2} yields \eqref{eqn:factorization}. 

\end{proof}

\subsection{} 
\label{sub:other coproducts}

The universal $R$-matrix intertwines the coproduct with its opposite:
$$
\Delta^{\op}(a) = \CR \cdot \Delta(a) \cdot \CR^{-1}
$$
for all $a \in \CA$. However, we now have the factorization:
$$
\CR = \prod_{r \in \BQ \sqcup \{\infty\}}^{\rightarrow} \CR'_{\bm+r\bth}
$$
where $\CR'_{\bm + \infty \bth}$ denotes the factor in square brackets in \eqref{eqn:univ r}. Therefore:
\begin{equation}
\label{eqn:other coproducts}
\Delta_{(\bm)}(a) = \left[\prod_{r \in \BQ_{>0} \sqcup \{\infty\}}^{\rightarrow} \CR'_{\bm+r\bth}\right] \cdot \Delta(a) \cdot \left[\prod_{r \in \BQ_{>0} \sqcup \{\infty\}}^{\rightarrow} \CR'_{\bm+r\bth}\right]^{-1}
\end{equation}
defines another coproduct on the algebra $\CA$, for all $\bm \in \qq$. We expect the restriction of $\Delta_{(\bm)}$ to the subalgebra $\CB_{\bm}$ to match the coproduct $\Delta_{\bm}$ of Subsection \ref{sub:delta m}. \\

\noindent The existence of many coproducts on quantum groups is a well-known phenomenon in representation theory. For example, when $Q$ is of finite type and $\CA$ is the corresponding quantum affine algebra, then $\Delta$ is the Drinfeld new coproduct and $\Delta_{(\b0)}$ is the Drinfeld-Jimbo coproduct. When $Q$ is the cyclic quiver and $\CA$ is the corresponding quantum toroidal algebra, we expect $\Delta_{(\b0)}$ to match the coproduct defined in \cite{Tale}. For general quivers, Conjecture \ref{conj:mo} suggests that the coproducts \eqref{eqn:other coproducts} match the ones defined by \cite{AO,MO,O1,O2,OS} using the theory of stable bases. \\

\begin{remark}
\label{rem:other tori}

All the results in the present paper would continue to hold if the equivariant parameters $\{q,t_e\}_{e \in E}$ were not generic, but specialized in any way which satisfies Assumption \begin{otherlanguage*}{russian}Ъ\end{otherlanguage*} of \cite{N}, for example:
$$
t_e = q^{\frac 12}, \ \forall e \in E
$$
Indeed, as explained in Section 5 of \loccitt, this assumption allows us to define $\CA$ as a Drinfeld double, and then all the notions of the current Section would carry through. The main caveat is that the wheel conditions \eqref{eqn:wheel} are no longer enough to define $\CS \subset \CV$; one needs to impose the stronger conditions \emph{(5.2)} of \loccit instead. \\

\end{remark}

\section{Connections to geometry}
\label{sec:geometry}

\medskip

\subsection{} We think of \eqref{eqn:pbw} as a PBW theorem for the shuffle algebras $\CA^\pm$: it says that a linear basis of $\CA^\pm$ is given by ordered products of linear bases of the subalgebras $\{\CB_{\bm+r\bth}\}_{r\in \BQ}$, for any fixed $\bm \in \qq$ and $\bth \in \qqp$. Moreover, by \eqref{eqn:pbw pairing}, these linear bases can be chosen to be dual to each other under the pairing \eqref{eqn:pairing positive}. Formula \eqref{eqn:factorization} also emphasizes the role the subalgebras $\CB^\pm_{\bm}$ play in understanding the universal $R$-matrix of $\CA$. This motivates our interest in understanding the subalgebras $\CB_{\bm}$. For starters, it is easy to see that the automorphisms \eqref{eqn:shift} send:
\begin{equation}
\label{eqn:shift sub}
\tau_{\bk} : \CB_{\bm} \xrightarrow{\sim} \CB_{\bm+\bk}
\end{equation}
for any $\bk \in \zz$. Therefore, the classification of the algebras $\CB_{\bm}$ only depends on $\bm \in (\BQ/\BZ)^I$. A more substantial reduction would be the following: \\

\begin{problem}
\label{prob:1}

Show that $\CB_{\bm}$ for the quiver $Q$ is isomorphic to the algebra $\CB_{\b0}$ for some other quiver $Q_{\bm}$, and understand the dependence of the latter on $\bm \in (\BQ/\BZ)^I$. \\

\end{problem}

\noindent For example, in \cite{Thesis}, when $Q$ is the cyclic quiver of length $n$, we showed that:
$$
\CB_{(m_1,\dots,m_n)} = U_q(\widehat{\mathfrak{gl}}_{n_1} ) \otimes \dots \otimes U_q(\widehat{\mathfrak{gl}}_{n_d})
$$
where the natural numbers $d$ and $n_1+\dots+n_d = n$ are defined by an explicit procedure from the rational numbers $m_1,\dots,m_n \in \BQ/\BZ$. In particular, we have:
$$
\CB_{(0,\dots,0)} = U_q(\widehat{\mathfrak{gl}}_n ) 
$$
Thus we encounter a particular instance of Problem \ref{prob:1}: when $Q$ is the cyclic quiver of length $n$, the statement of the Problem holds with $Q_{(m_1,\dots,m_n)}$ being a disjoint union of cyclic quivers of lengths $n_1,\dots,n_d$. \\

\subsection{} With Problem \ref{prob:1} in mind, we will now focus on the algebra $\CB_{\b0}$. \\

\begin{definition} 
\label{def:kac}

The Kac polynomial of $Q$ in dimension $\bn \in \nn$, denoted by:
\begin{equation}
\label{eqn:kac}
A_{Q,\bn}(t)
\end{equation}
counts the number of isomorphism classes of $\bn$-dimensional absolutely indecomposable representations of the quiver $Q$ over a finite field with $t$ elements. \\

\end{definition}

\noindent It was shown in \cite{K} that the number of absolutely indecomposable representations above is a polynomial in $t$, and so \eqref{eqn:kac} lies in $\BZ[t]$. This was further shown in \cite{HLRV} to lie in $\BN[t]$, thus opening the door to the notion that $A_{Q,\bn}(1)$ counts ``something". Before we make a conjecture as to what this something is, let us assemble all the Kac polynomials into a power series:
$$
A_Q(t,\bz) = \sum_{\bn \in \nn \backslash \b0} A_{Q,\bn}(t) \bz^{\bn} 
$$
where $\bz^{\bn} = \prod_{i \in I} z_i^{n_i}$. Similarly, define:
\begin{equation}
\label{eqn:graded dimension}
\chi_{\CB_{\b0}}(\bz) = \sum_{\bn \in \nn} \dim \CB_{\b0|\bn} \bz^{\bn} 
\end{equation}
and consider the plethystic exponential:
$$
\text{Exp} \left[ \sum_{\bn \in \nn \backslash \b0} d_{\bn} \bz^{\bn} \right] := \prod_{\bn \in \nn \backslash \b0} \frac 1{(1-\bz^{\bn})^{d_{\bn}}} 
$$
for any collection of natural numbers $\{d_{\bn}\}_{\bn \in \nn \backslash \b0}$. \\

\begin{conjecture}
\label{conj:kac poly}

For any quiver $Q$, we have:
\begin{equation}
\label{eqn:kac poly}
\chi_{\CB_{\b0}}(\bz) = \emph{Exp} \left[ A_Q(1,\bz) \right]
\end{equation}
In other words, $\CB_{\b0}$ is isomorphic (as a graded vector space) to the symmetric algebra of a graded vector space of graded dimension $A_Q(1,\bz)$. \\

\end{conjecture}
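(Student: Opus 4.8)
\medskip

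The plan is to deduce \eqref{eqn:kac poly} from the ``cohomological integrality'' phenomenon for Hall algebras. By \cite{N}, $\CA^+$ is a localized form of the $K$-theoretic Hall algebra of the doubled quiver $Q$. Degenerating to cohomology --- or working in $K$-theory directly, once the necessary integrality results are available there --- one is led to a PBW-type description
\[
\CA^+ \ \cong \ \Sym\big(\fg^{\mathrm{BPS}} \otimes \BF[u]\big)
\]
as $\nn \times \BZ$-graded vector spaces. Here $u$ is a polynomial generator recording the ``loop direction'' (the $K$-theoretic counterpart of $H^\bullet(B\BC^\times)$ in the cohomological Hall algebra), and $\fg^{\mathrm{BPS}} = \bigoplus_{\bn \in \nn\backslash\b0} \fg^{\mathrm{BPS}}_{\bn}$ is the Lie algebra of BPS states of \cite{Dav, DM}, each $\fg^{\mathrm{BPS}}_{\bn}$ carrying an internal grading with Poincar\'e polynomial equal (up to normalization) to the Kac polynomial $A_{Q,\bn}(t)$; in particular $\dim_{\BF} \fg^{\mathrm{BPS}}_{\bn} = A_{Q,\bn}(1)$. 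The first step of the proof is to make this isomorphism precise in the conventions of the present paper, matching the horizontal grading with the dimension vector and the vertical grading with the power of $u$ (up to a $\bn$-dependent shift), the localization being what turns the vertical grading from $\BN$- into $\BZ$-valued.

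The second step is to show that, under this isomorphism, the subalgebra $\CB^+_{\b0} \subset \CA^+$ corresponds to the ``$u^0$-part'' $\Sym(\fg^{\mathrm{BPS}})$. The condition ``naive slope $= \b0$'' of Definition \ref{def:subalg} says precisely that the vertical degree vanishes, which on its own only controls the \emph{total} power of $u$; the stronger condition ``slope $\leq \b0$'' of Definition \ref{def:slope} is what should force every factor to sit in $u^0$, rather than merely allowing powers of $u$ to cancel. Granting this, since $\dim_{\BF} \fg^{\mathrm{BPS}}_{\bn} = A_{Q,\bn}(1)$,
\[
\chi_{\CB_{\b0}}(\bz) \ = \ \text{Exp}\Big[\sum_{\bn \in \nn\backslash\b0} \big(\dim_{\BF} \fg^{\mathrm{BPS}}_{\bn}\big)\, \bz^{\bn}\Big] \ = \ \text{Exp}\big[A_Q(1,\bz)\big],
\]
which is \eqref{eqn:kac poly}. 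As consistency checks: when $Q$ is a cyclic quiver one has $\CB_{\b0} = U_q(\widehat{\fgl}_n)$ by \cite{Thesis, Cyc}, and the formula recovers the Kac polynomials of affine type $\widetilde A_{n-1}$ (real roots contributing $1$, imaginary roots contributing $n$); and for $Q$ of finite Dynkin type it predicts $\chi_{\CB_{\b0}}(\bz) = \prod_{\alpha > 0}(1-\bz^{\alpha})^{-1}$, since $A_{Q,\bn}(1) = 1$ exactly when $\bn$ is a positive root.

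A variant staying closer to the algebra itself runs as follows. By Theorem \ref{thm:intro} with $\bm = \b0$, multiplication gives $\CA^\pm \cong \bigotimes_{r \in \BQ}^{\rightarrow} \CB^\pm_{r\bth}$; after twisting by the shift automorphisms \eqref{eqn:shift sub} this is the algebraic shadow of the Harder--Narasimhan factorization of the Hall algebra with respect to the stability parameter $\bth$. One may then compute the full bigraded Hilbert series of $\CA^+$ --- where the Kac polynomials enter through counts of $\bth$-semistable representations and the integrality theorem --- and isolate the slope-$0$ factor $\CB^+_{\b0}$. Alternatively, one may attempt to identify $\CB^+_{\b0}$ directly with a generic/completed form of the Hall algebra of $Q$, equivalently with $U_q^+$ of the Borcherds--Kac--Moody algebra attached to $Q$ by Sevenhant and Van den Bergh, so that \eqref{eqn:kac poly} becomes their character formula; here the $e_{i,0} \in \CB_{\b0|\bs^i}$ supply the real simple-root generators, and one must locate the imaginary-root generators inside $\CB^+_{\b0}$ and check that no unexpected relations occur.

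I expect the main obstacle to be exactly this identification step --- showing that $\CB^+_{\b0}$ is \emph{neither larger nor smaller} than the expected object. One must translate the asymptotic/degree conditions of Definitions \ref{def:slope} and \ref{def:subalg}, stated for symmetric Laurent polynomials, into the support-and-weight conditions that cut out $\fg^{\mathrm{BPS}}$ (or the Hall algebra) inside $\CA^+$; and the fact that $\CA^+$ is the \emph{localized} Hall algebra --- Laurent rather than polynomial coefficients in the $z_{ia}$ --- makes the interplay between the localization and the slope filtration delicate, the point being precisely how the two together distinguish ``vertical degree $0$'' from ``lying in $u^0$''. A secondary difficulty, if one wishes to avoid cohomology, is that the requisite integrality theorem is presently on firmer footing for the cohomological than for the $K$-theoretic Hall algebra, so one may first have to establish the cohomological incarnation of \eqref{eqn:kac poly} --- for the graded dimensions of the slope-$0$ subalgebra of the preprojective cohomological Hall algebra --- and then lift it.
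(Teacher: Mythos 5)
The statement you are trying to prove is stated in the paper as a \emph{conjecture}, and the paper offers no proof of it --- only numerical verification in low rank (one vertex with $g \leq 3$ loops up to dimension $5$, two vertices with $d \leq 4$ edges up to dimension vector $(3,3)$) and exact identification in the handful of cases where $\CB_{\b0}$ is a known quantum group. Your proposal does not close this gap: it is a research program whose two pivotal steps are precisely the open problems. First, the PBW-type isomorphism $\CA^+ \cong \Sym(\fg^{\mathrm{BPS}} \otimes \BF[u])$ is a theorem of Davison--Meinhardt for the \emph{cohomological} Hall algebra of the preprojective algebra, not for the localized $K$-theoretic Hall algebra $\CA^+$ of this paper; the $K$-theoretic integrality statement you would need is, as you yourself note, not available, and "degenerating to cohomology" does not by itself transport graded dimensions back to $K$-theory without an argument that the degeneration is an isomorphism of graded vector spaces compatible with localization. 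Second, and more seriously, the identification of $\CB^+_{\b0}$ with the "$u^0$-part" $\Sym(\fg^{\mathrm{BPS}})$ is exactly the content of the conjecture: the paper itself only says it is "natural to conjecture that the degeneration map $K \leadsto H$ sends $\CB_{\b0} \leadsto U(\fg_{\mathrm{BPS}})$." You correctly isolate this as "the main obstacle" and observe that naive slope $= \b0$ controls only the total vertical degree while the full slope condition \eqref{eqn:slope plus} is needed to rule out cancelling powers of $u$ --- but you do not supply the argument, and no such argument is known. The homological grading on $H$ (your variable $t$, respectively $u$) has no visible counterpart on the $K$-theory side, which is why the paper expects to see only the value $A_Q(t,\bz)$ at $t=1$; turning that expectation into a bijection of graded pieces is the whole difficulty.

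Your consistency checks are sound and match the evidence the paper cites (the cyclic quiver case via $U_q(\hgl_n)$, the finite Dynkin case via positive roots), and the alternative routes you sketch (Harder--Narasimhan factorization of Hilbert series, or identification with $U_q^+$ of a Borcherds--Kac--Moody algebra \`a la Sevenhant--Van den Bergh) are reasonable directions --- but each of them again terminates at an unproved identification of $\CB^+_{\b0}$ with an independently computable object. As written, the proposal is a correct articulation of \emph{why one should believe} the conjecture and of where the difficulty lies, not a proof; it should be presented as such.
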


\noindent Conjecture \ref{conj:kac poly} gives an elementary combinatorial formula for the Kac polynomial at $t=1$, since $\dim \CB_{\b0|\bn}$ is the dimension of the vector space of Laurent polynomials satisfying the wheel conditions \eqref{eqn:wheel} and the growth conditions \eqref{eqn:slope plus} for $\bm = \b0$. We computed these dimensions using mathematical software and verified Conjecture \ref{conj:kac poly} in the cases when: \\

\begin{itemize}[leftmargin=*]

\item $Q$ is the quiver with one vertex and $g \in \{1,2,3\}$ loops, up to dimension $n=5$; \\

\item $Q$ is the quiver with two vertices and $d \in \{1,2,3,4\}$ edges between them, up to dimension vector $(n_1,n_2) = (3,3)$. \\

\end{itemize}

\noindent The two types of quivers above are relevant because they control the various wheel conditions \eqref{eqn:wheel}. Moreover, in the particular cases $g = 1$, $d = 1$ and $d = 2$ of the two bullets above, we have:
$$
\CB_{\b0} = U_{q}(\widehat{\fgl}_1), \qquad \CB_{\b0} = U_{q}(\fsl_3) \qquad \text{and} \qquad \CB_{\b0} = U_{q}(\widehat{\fsl}_2)
$$
respectively. In these cases, Conjecture \ref{conj:kac poly} is easily verified. \\

\begin{remark}

We may consider the subspace $\CB_{\b0}^{\emph{prim}} \subset \CB_{\b0}$ of primitive elements, i.e. those for which the coproduct $\Delta_{\b0}$ has no intermediate terms:
\begin{equation}
\label{eqn:primitive}
\Delta_{\b0}(P) = P \otimes 1 + h_{\emph{hdeg} P} \otimes P
\end{equation}
We expect $\dim \CB_{\b0|\bn}^{\emph{prim}}$ to be given by the number $C_{Q,\bn}(1)$ of \cite{BoS}, for all $\bn \in \nn$. \\

\noindent In particular, if $Q$ is the quiver with one vertex and $g\geq 2$ loops (such a vertex is called hyperbolic in the language of \loccitt), then the $q \rightarrow 1$ limit of $\CB_{\b0}$ should be the free Lie algebra on the vector space which is the $q \rightarrow 1$ limit of $\CB_{\b0}^{\emph{prim}}$. We thank Andrei Okounkov and Olivier Schiffmann for pointing out this expectation. \\

\end{remark}

\subsection{} We will now present two more frameworks which are conjecturally related to our constructions. The first of these is the $K$-theoretic Hall algebra of the quiver $Q$. To define it, let us consider the stack of $\bn$-dimensional representations of $Q$, for any $\bn \in \nn$:
\begin{equation}
\label{eqn:stack}
\fZ_{\bn} = \bigoplus_{\oij = e \in E} \text{Hom}(V_i, V_j) \Big / \prod_{i \in I} GL(V_i)
\end{equation}
where $V_i$ denotes a vector space dimension $n_i$, for every $i \in I$. The action of the product of general linear groups in \eqref{eqn:stack} is by conjugating homomorphisms $V_i \rightarrow V_j$. \\

\noindent The Kac polynomial of Definition \ref{def:kac} counts the number of (certain) points of $\fZ_{\bn}$ over the field with $t$ elements. There are also other fruitful ways to count points of the stack \eqref{eqn:stack}, but one can obtain similarly beautiful constructions by looking at other enumerative invariants of $\fZ_{\bn}$. For example, Schiffmann-Vasserot consider the equivariant algebraic $K$-theory groups of the cotangent bundle of the stack $\fZ_{\bn}$:
\begin{equation}
\label{eqn:k-ha}
K = \bigoplus_{\bn \in \nn} K_T(T^*\fZ_{\bn})
\end{equation}
where the torus $T = \BC^* \times \prod_{e \in E} \BC^*$ acts on $T^*\fZ_{\bn}$ as follows: the first factor of $\BC^*$ scales the cotangent fibers, and the $e$-th $\BC^*$ in the product scales the homomorphism corresponding to the same-named edge $e$ in \eqref{eqn:stack}. As $K$ is a module over the ring:
\begin{equation}
\label{eqn:k-th point}
K_T(\text{point}) = \BZ[q^{\pm 1}, t_e^{\pm 1}]_{e\in E}
\end{equation}
we may consider its localization with respect to the fraction field $\BF = \BQ(q, t_e)_{e \in E}$:
\begin{equation}
\label{eqn:localization}
K_{\loc} = K \bigotimes_{\BZ[q^{\pm 1}, t_e^{\pm 1}]_{e\in E}}  \BQ(q, t_e)_{e \in E}
\end{equation}
We refer to \cite{S Lect} for a survey of $K$-theoretic Hall algebras, to \cite[Section 2]{N} for a quick overview in notation similar to ours, and to Remark \ref{rem:cotangent stack} for an explicit presentation of $T^*\fZ_{\bn}$. In particular, the reason for summing over all $\bn$ in \eqref{eqn:k-ha} is to make $K$ into a $\nn$-graded $\BF$-algebra. Moreover, we have a $\BF$-algebra homomorphism:
$$
K_{\loc} \xrightarrow{\iota} \CV
$$
where $\CV$ is the algebra \eqref{eqn:big shuffle}. It was shown in \cite{VV} that $\iota$ is injective, in \cite{Zhao} that $\text{Im }\iota \subseteq \CS$, and in \cite{N} that $\text{Im }\iota = \CS$. We therefore have a $\BF$-algebra isomorphism:
\begin{equation}
\label{eqn:hom to shuffle}
K_{\loc} \xrightarrow{\sim} \CS
\end{equation}

\begin{problem}
\label{prob:2}

What is the geometric meaning of the slope subalgebras $\CB^+_{\bm} \subset \CA^+ = \CS$, for various $\bm \in \qq$, when pulled back to $K_{\emph{loc}}$ via the isomorphism \eqref{eqn:hom to shuffle}? \\

\end{problem}

\subsection{} A bridge between Conjecture \ref{conj:kac poly} and Problem \ref{prob:2} is provided by the work of Davison and Meinhardt (\cite{Dav, DM}), who studied the version of \eqref{eqn:k-ha} when equivariant $K$-theory is replaced by Borel-Moore homology. The resulting object $H$ is called the cohomological Hall algebra of the quiver $Q$, and its study goes back to Kontsevich and Soibelman in \cite{KS}. The algebra $H$ is related to the algebra $K$ as Yangians are related to quantum loop groups. For a general quiver $Q$, Davison and Meinhardt construct in \cite{Dav, DM} a $\nn$-graded Lie algebra $\fg_{\text{BPS}}$ with an algebra embedding:
$$
U(\fg_{\text{BPS}}) \subset H
$$
The Lie algebra $\fg_{\text{BPS}}$ has the following graded dimension (see \eqref{eqn:graded dimension}):
$$
\chi_{\fg_{\text{BPS}}}(\bz) = A_Q(t,\bz)  \quad \Rightarrow \quad  \chi_{U(\fg_{\text{BPS}})}(\bz) = \text{Exp} \left[ A_Q(t,\bz) \right]
$$
where $t$ keeps track of the homological degree on $H$. Therefore, it is natural to conjecture that the degeneration map $K \leadsto H$ sends $\CB_{\b0} \leadsto U(\fg_{\text{BPS}})$. The homological grading, ubiquitous on the $H$-side, is not readily seen on the $K$-side. This is why we expect \eqref{eqn:kac poly} to only see the value at $t = 1$ of the Kac polynomial $A_Q(t,\bz) \in \BN[t][[\bz]]$. \\

\subsection{}
\label{sub:nakajima}

We will now recall the construction of Nakajima quiver varieties associated to the quiver $Q$ (\cite{Nak 0}). To define these, consider for any $\bv, \bw \in \nn$ the affine space:
$$
N_{\bv,\bw} = \bigoplus_{\oij = e \in E} \Big[ \Hom(V_i, V_j) \oplus  \Hom(V_j, V_i) \Big] \bigoplus_{i \in I} \Big[ \Hom(W_i, V_i) \oplus \Hom(V_i, W_i) \Big]
$$
where $V_i$ (respectively $W_i$) are vector spaces of dimension $v_i$ (respectively $w_i$) for all $i \in I$. Points of the affine space above will be denoted by quadruples:
\begin{equation}
\label{eqn:quadruples}
(X_e,Y_e,A_i,B_i)_{e \in E, i\in I}
\end{equation}
where $X_e, Y_e, A_i, B_i$ denote homomorphisms in the four types of Hom spaces that enter the definition of $N_{\bv,\bw}$. Consider the action of:
\begin{equation}
\label{eqn:gauge group}
G_{\bv} = \prod_{i \in I} GL(V_i)
\end{equation}
on $N_{\bv,\bw}$ by conjugating $X_e, Y_e$, left-multiplying $A_i$ and right-multiplying $B_i$. It is easy to see that $G_{\bv}$ acts freely on the open locus of \textbf{stable} \footnote{In general, stability conditions for quiver varieties are indexed by $\bth \in \mathbb{R}^I$; the one studied herein corresponds to $\bth = (1,\dots,1)$} points:
$$
N_{\bv,\bw}^s \subset N_{\bv,\bw}
$$
i.e. those \eqref{eqn:quadruples} such that there does not exist a collection of subspaces $\{V_i' \subseteq V_i\}_{i \in I}$ (other than $V_i' = V_i$ for all $i \in I$) which is preserved by the maps $X_e$ and $Y_e$, and contains $\text{Im }A_i$ for all $i \in I$. Let us consider the quadratic moment map:
\begin{equation}
\label{eqn:moment}
N_{\bv,\bw} \xrightarrow{\mu} \text{Lie } G_{\bv} = \bigoplus_{i \in I} \text{Hom}(V_i,V_i)
\end{equation}
$$
\mu ( (X_e,Y_e,A_i,B_i)_{e\in E, i\in I}) = \sum_{e \in E} \Big(X_e Y_e - Y_e X_e \Big) + \sum_{i \in I} A_iB_i
$$
If we write $\mu^{-1}_{\bv,\bw}(0)^s = \mu^{-1}_{\bv,\bw}(0) \cap N_{\bv,\bw}^s$, then there is a geometric quotient:
\begin{equation}
\label{eqn:quiver var}
\CN_{\bv,\bw} = \mu^{-1}_{\bv,\bw}(0)^s / GL_{\bv}
\end{equation}
which is called the \textbf{Nakajima quiver variety} for the quiver $Q$, associated to $\bv,\bw$. \\

\begin{remark}
\label{rem:cotangent stack}

The $\bv = \bn$, $\bw = \b0$ version of the construction above, where instead of taking the geometric quotient \eqref{eqn:quiver var} one takes the stack quotient, is simply $T^*\fZ_{\bn}$. \\

\end{remark}

\subsection{} 
\label{sub:k-theory}

The algebraic group:
\begin{equation}
\label{eqn:big torus}
T_{\bw} = \BC^* \times \prod_{e \in E} \BC^* \times \prod_{i \in I} GL(W_i)
\end{equation}
acts on Nakajima quiver varieties as follows:
$$
\left( \bar{q}, \bar{t}_e, \bar{U}_{i} \right)_{e\in E, i\in I} \cdot (X_e,Y_e,A_i,B_i)_{e \in E, i \in I} = \left(\frac {X_e}{\bar{t}_e}, \frac {\bar{t}_e Y_e}{\bar{q}}, A_i \bar{U}_{i}^{-1}, \frac {\bar{U}_{i} B_i}{\bar{q}} \right)_{e\in E, i \in I}
$$
With respect to the action above, the $T_{\bw}$-equivariant algebraic $K$-theory groups of Nakajima quiver varieties are modules over the ring: 
$$
K_{T_{\bw}}(\text{point}) =  \BZ[q^{\pm 1}, t_e^{\pm 1}, u_{ia}^{\pm 1}]^{\text{sym}}_{e\in E,i\in I, 1 \leq a \leq w_i}
$$
(where ``sym" means symmetric in the equivariant parameters $u_{i1},\dots,u_{iw_i}$ for each $i \in I$ separately). We will localize our $K$-theory groups by analogy with \eqref{eqn:localization}:
\begin{equation}
\label{eqn:k v w}
K_{\bv, \bw} = K_{T_{\bw}}(\CN_{\bv,\bw}) \bigotimes_{\BZ[q^{\pm 1}, t_e^{\pm 1}, u_{ia}^{\pm 1}]^{\sym}_{e\in E,i\in I, 1 \leq a \leq w_i}}  \BQ(q, t_e, u_{ia})^{\sym}_{e \in E,i\in I, 1\leq a\leq w_i}
\end{equation}
As with the $K$-theoretic Hall algebra, it makes sense to consider the direct sum:
\begin{equation}
\label{eqn:k w}
K_{\bw} = \bigoplus_{\bv \in \nn} K_{\bv,\bw}
\end{equation}
For every $i \in I$, consider the \textbf{tautological bundle} $V_i$ of rank $v_i$, whose fiber over a point \eqref{eqn:quadruples} is the vector space $V_i$ itself; this is a non-trivial vector bundle, because Nakajima quiver varieties arise as quotients by the group \eqref{eqn:gauge group}. We formally write:
$$
[V_i] = x_{i1}+\dots+x_{iv_i} \in K_{\bv,\bw}
$$
The symbols $x_{ia}$ are not elements of $K_{\bv,\bw}$, but any symmetric Laurent polynomial in them is (specifically, it is obtained by taking the $K$-theory class of an appropriate Schur functor of the tautological vector bundle $V_i$). We will abbreviate:
$$
\bX_{\bv} = \{\dots, x_{i1},\dots, x_{iv_i},\dots\}_{i \in I}
$$
By the discussion above, any Laurent polynomial $p(\dots,x_{i1},\dots,x_{iv_i},\dots)$ which is symmetric in the $x_{ia}$'s (for each $i \in I$ separately) yields an element of $K$-theory:
\begin{equation}
\label{eqn:tautological class}
p(\bX_{\bv}) \in  K_{\bv,\bw}
\end{equation}
called a \textbf{tautological class}. By \cite[Theorem 1.2]{MN}, tautological classes (as $p$ runs over all symmetric Laurent polynomials) linearly span $K_{\bv,\bw}$ for any $\bv, \bw \in \nn$. \\






\begin{example}

Recall the function $\zeta_{ij}(x)$ of \eqref{eqn:def zeta}, and consider its close cousin:
\begin{equation}
\label{eqn:def tzeta}
\tzeta_{ij}(x) = \frac {\zeta_{ij}(x)}{\left(1-\frac xq\right)^{\delta_j^i}\left(1-\frac 1{qx}\right)^{\delta_j^i}} = \frac {\prod_{e = \oij \in E} \left(\frac 1{t_e} - x \right) \prod_{e = \oji \in E} \left(1 - \frac {t_e}{qx} \right)}{\left(1-x \right)^{\delta_j^i} \left(1 - \frac 1{qx} \right)^{\delta_j^i}}
\end{equation}
Then for any $\bn \in \nn$, let us define:
\begin{align}
&\tzeta\left(\frac {\bZ_{\bn}}{\bX_{\bv}}\right) = \prod^{i \in I}_{1 \leq a \leq n_i} \prod^{j \in I}_{1 \leq b \leq v_j} \frac {\prod_{e = \oij \in E} \left(\frac 1{t_e} - \frac {z_{ia}}{x_{jb}} \right) \prod_{e = \oji \in E} \left(1 - \frac {t_e x_{jb}}{qz_{ia}} \right)}{\left(1-\frac {z_{ia}}{x_{jb}} \right)^{\delta_j^i} \left(1 - \frac {x_{jb}}{qz_{ia}} \right)^{\delta_j^i}} \label{eqn:zeta plus} \\
&\tzeta\left(\frac {\bX_{\bv}}{\bZ_{\bn}}\right) = \prod^{i \in I}_{1 \leq a \leq n_i} \prod^{j \in I}_{1 \leq b \leq v_j} \frac {\prod_{e = \oji \in E} \left(\frac 1{t_e} - \frac {x_{jb}}{z_{ia}} \right) \prod_{e = \oij \in E} \left(1 - \frac {t_e z_{ia}}{q x_{jb}} \right)}{\left(1-\frac {x_{jb}}{z_{ia}} \right)^{\delta_j^i} \left(1 - \frac {z_{ia}}{qx_{jb}} \right)^{\delta_j^i}} \label{eqn:zeta minus}
\end{align}
as elements of $K_{\bv,\bw}[[\dots, z^{\pm 1}_{i1},\dots,z^{\pm 1}_{in_i},\dots]]$. \\



\end{example}


\subsection{}
\label{sub:nakajima correspondencs}

One important reason for considering all $\bv$'s together in \eqref{eqn:k w} is given by the following correspondences of Nakajima, which give operators $K_{\bv^+,\bw} \leftrightharpoons K_{\bv^-,\bw}$ whenever $\bv^+ = \bv^- + \bs^i$ for some $i \in I$. Explicitly, for any such $\bv^\pm$, let:
\begin{equation}
\label{eqn:commutative diagram}
\xymatrix{& \CN_{\bv^+,\bv^-,\bw} \ar[ld]_{\pi_+} \ar[rd]^{\pi_-} & \\
\CN_{\bv^+,\bw} & & \CN_{\bv^-,\bw}}
\end{equation}
be the Hecke correspondences of \cite[Section 5]{Nak}. There is a tautological line bundle:
$$
\CL_i \in \text{Pic} \left( \CN_{\bv^+,\bv^-,\bw} \right) \qquad \leadsto \qquad l_i = [\CL_i] \in K_{T_{\bw}}(\CN_{\bv^+,\bv^-,\bw} )
$$
With this notation in mind, let us consider the following endomorphisms of $K_{\bw}$:
$$
\begin{tikzcd}
K_{\bv^-,\bw} \arrow[rr, bend left]{}{E_{i,d}} & & K_{\bv^+,\bw} \arrow[ll, bend left]{}{F_{i,d}}
\end{tikzcd} \qquad \text{and} \begin{tikzcd}
 K_{\bv,\bw} \arrow[loop]{}[swap]{H_i^{\pm}(z)}
\end{tikzcd}
$$
(for all $\bv^+ = \bv^- + \bs^i$ and $\bv$ in $\nn$) given by the formulas:
\begin{align}
&E_{i,d} (\alpha) = \pi_{+*} \left(l_i^d \cdot \frac {\prod^{j \in I}_{e = \oij} t_e^{-v^+_j} \prod^{j \in I}_{e = \oji} \left(\det V^+_j \right) \left( \frac {-t_e}{l_i q} \right)^{v^+_j}}{\left(\det V_i^+\right) \left( \frac {-1}{l_i q}\right)^{v^+_i}}  \cdot \pi_-^*(\alpha) \right)  \label{eqn:op e} \\
&F_{i,d} (\alpha) = \pi_{-*} \left( l_i^d \cdot \frac {\prod^{j \in I}_{e = \oij} (\det V_j^-) \left( \frac {-q}{l_i t_e} \right)^{v^-_j} \prod^{j \in I}_{e = \oji} t_e^{-v^-_j}}{(\det W_i)^{-1} \left(-l_i \right)^{r_i} (\det V_i^-) \left(\frac {-q}{l_i} \right)^{v_i^-}} \cdot \pi_+^*(\alpha) \right)\label{eqn:op f} \\
&H_i^\pm(z_{i1}) (\alpha) = \frac {\tzeta\left(\frac {\bZ_{\bs^i}}{\bX_{\bv}}\right)}{\tzeta\left(\frac {\bX_{\bv}}{\bZ_{\bs^i}} \right)} \cdot \frac {\wedge^\bullet \left(\frac {z_{i1}q}{W_i} \right)}{\wedge^\bullet \left(\frac {z_{i1}}{W_i} \right)}  \cdot \alpha \label{eqn:op h} 
\end{align}
\footnote{The notation in \eqref{eqn:op h} is such that for any variable $z$ and any vector space $S$, we set:
$$
\wedge^\bullet\left(\frac zS \right) = \sum_{k = 0}^{\text{dim }S} (-z)^k [\wedge^k(S^\vee)] \qquad \text{and} \qquad \wedge^\bullet\left(\frac Sz \right) = \sum_{k = 0}^{\text{dim }S} (-z)^{-k} [\wedge^k(S)] 
$$} for all $i \in I$ and $d \in \BZ$. In formulas \eqref{eqn:op e}--\eqref{eqn:op f}, the fractions are the $K$-theory classes of certain line bundles on $\CN_{\bv^+,\bv^-,\bw}$ (built out of the determinants of the vector bundles $\{V_j^\pm,W_j\}_{j \in I}$, as well as the tautological line bundle) times equivariant constants. It was shown in \cite{Nak} that the operators \eqref{eqn:op e}--\eqref{eqn:op h} induce an action of the quantum loop group associated to the quiver $Q$ on $K_{\bw}$, in the case when the quiver has no edge loops (see also \cite{GV,VV 0} for earlier work on the cyclic quiver case). \\

\subsection{}

It is natural to expect the operators \eqref{eqn:op e} to extend to an action $K_{\loc} \curvearrowright K_{\bw}$, $\forall \bw \in \nn$ (more specifically, the operators $E_{i,d}$ should correspond to the action of the $\bn = \bs^i$ summand of \eqref{eqn:k-ha}). This was proved in complete generality in \cite{YZ}. As \cite{N} showed that the localized $K$-theoretic Hall algebra is isomorphic to the shuffle algebra $\CS$, it becomes natural to ask for a ``shuffle" version of formula \eqref{eqn:op e}, and analogously for \eqref{eqn:op f}. As shown in \cite{K-th}, the shuffle algebra naturally arises when we present formulas in terms of tautological classes \eqref{eqn:tautological class}. To this end, we have the following generalization of \cite[Theorem 4.7]{K-th} (which treated the Jordan quiver case). \\

\begin{theorem} 
\label{thm:action}

The following formulas give actions $\CA^{\pm} \curvearrowright K_{\bw}$:
\begin{align}
&p(\bX_{\bv}) \xrightarrow{F} \frac 1{\bn!} \int^+ \frac {F(\bZ_{\bn})}{\tzeta \left( \frac {\bZ_{\bn}}{\bZ_{\bn}} \right)} p(\bX_{\bv+\bn} - \bZ_{\bn}) \tzeta\left(\frac {\bZ_{\bn}}{\bX_{\bv+\bn}}\right) \wedge^\bullet \left(\frac {\bZ_{\bn} q}{\bW} \right)  \label{eqn:action plus} \\
&p(\bX_{\bv}) \xrightarrow{G} \frac 1{\bn!} \int^- \frac {G(\bZ_{\bn})}{\tzeta \left( \frac {\bZ_{\bn}}{\bZ_{\bn}} \right)} p(\bX_{\bv - \bn} + \bZ_{\bn}) \tzeta\left(\frac {\bX_{\bv - \bn}}{\bZ_{\bn}} \right)^{-1} \wedge^\bullet \left(\frac {\bZ_{\bn}}{\bW} \right)^{-1}  \label{eqn:action minus}
\end{align}
for all $F \in \CA_{\bn}$, $G \in \CA_{- \bn}$ (the notation will be explained after the statement of the Theorem). Together with \eqref{eqn:op h}, formulas \eqref{eqn:action plus}--\eqref{eqn:action minus} glue to an action $\CA \curvearrowright K_{\bw}$. \\

\end{theorem}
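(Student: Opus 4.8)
The plan is to prove the statement in three stages: (a) the right-hand sides of \eqref{eqn:action plus}--\eqref{eqn:action minus} are well-defined elements of $K_{\bw}$; (b) the assignments $F\mapsto(F\cdot)$ and $G\mapsto(G\cdot)$ convert the shuffle product into composition of operators, so that \eqref{eqn:action plus} and \eqref{eqn:action minus} do define representations of $\CA^{+}$ and $\CA^{-}$; and (c) these two representations, together with the operators \eqref{eqn:op h}, satisfy the cross relations \eqref{eqn:rel double 1}--\eqref{eqn:rel double 3}, which is precisely what it means for them to glue into a representation of $\CA$.

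For stage (a) I would read $\int^{+}$ (resp.\ $\int^{-}$) as the iterated residue $\prod_{i,a}\oint\frac{dz_{ia}}{2\pi i z_{ia}}$ along contours on which the $z_{ia}$ have large (resp.\ small) absolute value. The integrand in \eqref{eqn:action plus} is a rational function of the integration variables $\bZ_{\bn}$ and of $\bX_{\bv+\bn}$, with poles along the diagonals $z_{ia}=z_{ib}$ and along $z_{ia}=q^{\pm1}z_{jb}$, $z_{ia}=t_e^{\pm1}z_{jb}$; the diagonal poles disappear after symmetrization (the order of a pole of a symmetric function being even), while the wheel conditions \eqref{eqn:wheel} on $F$ are exactly what is needed so that no two of the remaining poles pinch a contour, making the iterated residue unambiguous. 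One then checks that its value is a symmetric Laurent polynomial in $\bX_{\bv+\bn}$ over the localized base ring, hence an element of $K_{\bv+\bn,\bw}$ by the spanning theorem \cite[Theorem 1.2]{MN}. Here the factor $\tzeta(\bZ_{\bn}/\bZ_{\bn})^{-1}$ cancels the spurious self-interaction of the new variables, and $\wedge^{\bullet}(\bZ_{\bn}q/\bW)$ records the framing dependence; \eqref{eqn:action minus} is treated identically with the roles of $0$ and $\infty$ exchanged. This generalizes \cite[Theorem 4.7]{K-th}, which handles the Jordan quiver.

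For stage (b) I would prove $(F*F')\cdot p(\bX_{\bv})=F\cdot\bigl(F'\cdot p(\bX_{\bv})\bigr)$ for $F\in\CA_{\bn}$, $F'\in\CA_{\bn'}$, and dually for $\CA^{-}$. Applying $F'$ and then $F$ introduces first the $\bn'$ and then the $\bn$ new integration variables, and after collecting the kernels one checks, by a contour deformation and a careful accounting of the discrepancy between $\zeta$ and $\tzeta$ in \eqref{eqn:def tzeta}, that the resulting integrand matches the one attached to $F*F'$ via \eqref{eqn:shuf prod}: in particular the linking factor $\prod\zeta_{ij}(z_{ia}/z_{jb})$ between the two groups of variables is reproduced, and the two nested symmetrizations combine into the single symmetrization of the shuffle product. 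The normalizing factors $\tzeta(\bZ_{\bn}/\bZ_{\bn})^{-1}$ and $\wedge^{\bullet}(\bZ_{\bn}q/\bW)$ in \eqref{eqn:action plus}--\eqref{eqn:action minus} are designed precisely so that this matching goes through. Up to the bookkeeping of the edge parameters $t_e$, this is the same computation as in the Jordan-quiver case treated in \cite{K-th}.

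For stage (c), stage (b) together with Theorem \ref{thm:generate} reduces \eqref{eqn:rel double 1}--\eqref{eqn:rel double 3} to the generators $e_{i,d}$, $f_{i,d}$, i.e.\ to the operators \eqref{eqn:action plus}--\eqref{eqn:action minus} with $F=z_{i1}^{d}$, $G=z_{i1}^{k}$ and $\bn=\bs^{i}$; denote these by $E_{i,d}$, $F_{i,k}$ as in \eqref{eqn:op e}--\eqref{eqn:op f}. Relations \eqref{eqn:rel double 1}--\eqref{eqn:rel double 2} then follow because appending a single variable $z_{i1}$ to the tautological alphabet $\bX_{\bv}$ multiplies the scalar by which $h^{\pm}_{j}(w)$ acts in \eqref{eqn:op h} by exactly $\zeta_{ij}(z_{i1}/w)/\zeta_{ji}(w/z_{i1})$, which is built into the definition \eqref{eqn:def tzeta} of $\tzeta$. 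The heart of the matter is relation \eqref{eqn:rel double 3}: the two compositions $E_{i,d}\circ F_{j,k}$ and $F_{j,k}\circ E_{i,d}$, applied to $p(\bX_{\bv})$, are given by the same rational integrand in the added variable $z$ and the removed variable $z'$, but over contours that differ in the position of $z'$ relative to $z$; their difference is therefore a sum of residues, which vanishes when $i\neq j$ (there being no pole at $z=z'$) and which, when $i=j$, is concentrated at the diagonal $z'=z$ together with the endpoints of the contour and evaluates --- by a direct residue computation --- to the right-hand side of \eqref{eqn:rel double 3}, the constant $\gamma_{i}$ of \eqref{eqn:constant} arising from the edge-loop factors of $\tzeta$ on the diagonal. (Alternatively, once one verifies that for $\bn=\bs^{i}$ the formulas \eqref{eqn:action plus}--\eqref{eqn:action minus} agree with Nakajima's Hecke operators \eqref{eqn:op e}--\eqref{eqn:op f} after matching the determinant line bundles and setting $l_{i}=z_{i1}$, relation \eqref{eqn:rel double 3} may be deduced by equivariant localization on the correspondences \eqref{eqn:commutative diagram}.) The main obstacle is concentrated in this last step: the loop-free case is covered by \cite{Nak} and the one-loop case by \cite{K-th}, but a general quiver mixes edge loops --- which bring the wheel conditions, and hence $\gamma_{i}$, into play --- with multiple edges between distinct vertices, and it is exactly this combined accounting of the $t_e$'s and of the determinant twists that fixes the normalizing factors appearing in \eqref{eqn:action plus}--\eqref{eqn:action minus}.
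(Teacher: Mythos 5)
Your overall architecture (well-definedness, multiplicativity, cross relations) is reasonable, and your stages (b) and (c) are close in spirit to the paper's Proposition \ref{prop:normal} and its appeal to the cyclic-quiver computation of \cite{Thesis}. But stage (a) has a genuine gap, and it concerns the one point that actually requires an argument. The issue with \eqref{eqn:action plus}--\eqref{eqn:action minus} is not the convergence or unambiguity of the contour integrals (those are fixed once and for all by the explicit prescription \eqref{eqn:normal int plus}--\eqref{eqn:normal int minus}, and the wheel conditions play no role there). The issue is that the formulas take as input a \emph{polynomial representative} $p$ of a class in $K_{\bv,\bw}$, and moreover the plethysm $p(\bX_{\bv+\bn}-\bZ_{\bn})$ itself depends on a choice of presentation \eqref{eqn:plethysm minus 1}. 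Checking that the output is again a symmetric Laurent polynomial in $\bX_{\bv+\bn}$, hence \emph{some} tautological class, does not show that the assignment descends to $K$-theory: the map $p\mapsto p(\bX_{\bv})$ has a large kernel (the relations among tautological classes coming from the geometry of $\CN_{\bv,\bw}$), and one must show that if $p(\bX_{\bv})=0$ in $K_{\bv,\bw}$ then the right-hand side of \eqref{eqn:action plus} vanishes in $K_{\bv+\bn,\bw}$. No amount of contour analysis of the integrand gives this, since the kernel is not visible at the level of Laurent polynomials.

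The paper closes this gap by a different route: it first proves (Claim \ref{claim:proof}) that for $F=e_{i,d}$ and $G=f_{i,d}$ the formulas \eqref{eqn:action plus}--\eqref{eqn:action minus} coincide with Nakajima's geometric operators \eqref{eqn:op e}--\eqref{eqn:op f}, using the presentation of the Hecke correspondences as projectivizations \eqref{eqn:proj minus}--\eqref{eqn:proj plus} of the complex \eqref{eqn:complex}; these operators are pushforward-pullback along correspondences and hence intrinsically well defined on $K$-theory. Well-definedness for products of generators then follows from the composition statement (Proposition \ref{prop:normal}), and for arbitrary $F\in\CA^{+}$, $G\in\CA^{-}$ from Theorem \ref{thm:generate}. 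In other words, the geometric identification you relegate to a parenthetical ``alternatively'' in stage (c) is not optional: it is the mechanism by which well-definedness is established, and your stage (a) cannot stand on its own without it. Once that is repaired, your stages (b) and (c) are essentially the paper's argument (multiplicativity via the iterated-contour identity \eqref{eqn:action plus explicit}--\eqref{eqn:action minus explicit}, and the relations \eqref{eqn:rel double 1}--\eqref{eqn:rel double 3} checked on generators, with the commutator \eqref{eqn:rel double 3} coming from the residues separating the two contour configurations).
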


\noindent Let us now explain the notation in \eqref{eqn:action plus}--\eqref{eqn:action minus}, except for the definition of the integrals $\int^\pm$, which will be given in Subsection \ref{sub:normal}. We write:
$$
(F \text{ or } G)(\bZ_{\bn}) = (F \text{ or } G)(\dots,z_{i1},\dots,z_{in_i},\dots)_{i \in I} 
$$
$$
\tzeta \left( \frac {\bZ_{\bn}}{\bZ_{\bn}} \right) = \mathop{\prod^{i, j \in I}_{1 \leq a \leq n_i, 1 \leq b \leq n_j}}_{(i,a) \neq (j,b)} \frac {\zeta_{ij} \left( \frac {z_{ia}}{z_{jb}} \right)}{\left( 1 - \frac {z_{ia}}{z_{jb}q} \right)^{\delta_j^i}} 
$$
and:
$$\wedge^\bullet \left(\frac {\bZ_{\bn}(1 \text{ or } q)}{\bW} \right) = \prod^{i\in I}_{1\leq a \leq n_i} \wedge^\bullet \left(\frac {z_{ia}(1 \text{ or } q)}{W_i} \right) 
$$
The operation:
\begin{equation}
\label{eqn:plethysm plus}
p(\bX_{\bv}) \mapsto p(\bX_{\bv - \bn} + \bZ_{\bn})
\end{equation}
is called a \textbf{plethysm}, and it is defined by evaluating the Laurent polynomial $p$ at the collection of variables $\{\dots,x_{i1},\dots,x_{i,v_i-n_i},z_{i1},\dots,z_{in_i},\dots\}_{i \in I}$. The notation:
\begin{equation}
\label{eqn:plethysm minus}
p(\bX_{\bv}) \mapsto p(\bX_{\bv+\bn} - \bZ_{\bn})
\end{equation}
would like to refer to the ``inverse" operation of \eqref{eqn:plethysm plus}, but the problem is that it is not uniquely defined. Indeed, by the fundamental theorem of symmetric polynomials, the Laurent polynomial $p$ can be written as:
\begin{equation}
\label{eqn:plethysm minus 1}
p(\bX_{\bv}) = \frac {\text{polynomial in } \{x_{i1}^s + \dots + x_{iv_i}^s\}_{i \in I, s\in \BN^*}}{\prod_{i \in I} \left(x_{i1} \dots x_{iv_i} \right)^N}
\end{equation}
in infinitely many ways, for various polynomials in the numerator and various natural numbers $N$ in the denominator. We define \eqref{eqn:plethysm minus} by:
\begin{equation}
\label{eqn:plethysm minus 2}
p(\bX_{\bv+\bn} - \bZ_{\bn}) = \frac {\text{polynomial in } \{x_{i1}^s + \dots + x_{i,v_i+n_i}^s - z_{i1}^s - \dots - z_{in_i}^s\}_{i \in I, s\in \BN^*}}{\prod_{i \in I} \left(\frac {x_{i1} \dots x_{i,v_i+n_i}}{z_{i1}\dots z_{in_i}} \right)^N}
\end{equation}
Of course, the right-hand side of the expressions above depends on the particular polynomial and the number $N$ in \eqref{eqn:plethysm minus 1}, but we will show in the proof of Theorem \ref{thm:action} that the right-hand side of \eqref{eqn:action plus} does not depend on these choices. \\

\subsection{}
\label{sub:normal}

\noindent The integrals \eqref{eqn:action plus}--\eqref{eqn:action minus} are defined by (see \cite[Definition 3.15]{Laumon}):
\begin{align}
&\int^+ T(\dots, z_{ia},\dots ) = \sum^{\text{functions}}_{\sigma : \{(i,a)\} \rightarrow \{\pm 1\}} \int_{|z_{ia}| = r^{\sigma(i,a)}}^{|q/t_e|^{\pm 1}, |t_e|^{\pm 1} > 1} T(\dots, z_{ia},\dots ) \prod_{(i,a)} \frac { \sigma(i,a)dz_{ia}}{2\pi \sqrt{-1} z_{ia}} \label{eqn:normal int plus} \\
&\int^- T(\dots, z_{ia},\dots ) = \sum^{\text{functions}}_{\sigma : \{(i,a)\} \rightarrow \{\pm 1\}} \int_{|z_{ia}| = r^{\sigma(i,a)}}^{|q/t_e|^{\pm 1}, |t_e|^{\pm 1} < 1} T(\dots, z_{ia},\dots ) \prod_{(i,a)} \frac { \sigma(i,a)dz_{ia}}{2\pi \sqrt{-1} z_{ia}} \label{eqn:normal int minus}
\end{align}
for some positive real number $r \ll 1$. In each summand, each variable $z_{ia}$ is integrated over either a very small circle of radius $r$ or a very large circle of radius $r^{-1}$. The meaning of the superscripts ``$|q/t_e|^{\pm 1}, |t_e|^{\pm 1} > 1$" that adorn the integral \eqref{eqn:normal int plus} is the following: in the summand corresponding to a particular $\sigma$, if:
$$
\sigma(i,a) = \sigma(j,b) = 1 \qquad (\text{respectively } \sigma(i,a) = \sigma(j,b) = -1)
$$
then the variables $z_{ia}$ and $z_{jb}$ are both integrated over the small (respectively large) circle. The corresponding integral is computed via residues under the assumption $|q/t_e|, |t_e| > 1$ (respectively $|q/t_e|^{-1}, |t_e|^{-1} > 1$). If $\sigma(i,a) \neq \sigma(j,b)$, then we do not need to assume anything about the sizes of $q$ and $t_e$. One defines \eqref{eqn:normal int minus} analogously. \\

\noindent The following Proposition is precisely the motivation behind our definition of $\int^\pm$, and its proof closely follows the analogous computation in \cite[Theorem 3.17]{Laumon}. \\

\begin{proposition}
\label{prop:normal}

The right-hand side of formula \eqref{eqn:action plus}, respectively \eqref{eqn:action minus}, for:
\begin{equation}
\label{eqn:special}
F = e_{i_1,d_1} * \dots * e_{i_n,d_n} \qquad \text{respectively} \qquad G = f_{i_1,d_1} * \dots * f_{i_n,d_n}
\end{equation}
is equal to the composition of the right-hand sides of \eqref{eqn:action plus} for $F = e_{i_1,d_1}$, \dots , $F = e_{i_n,d_n}$, respectively the right-hand sides of \eqref{eqn:action minus} for $G = f_{i_1,d_1}$, \dots , $G = f_{i_n,d_n}$. \\

\end{proposition}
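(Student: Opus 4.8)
The plan is to reduce the Proposition to a single identity relating the iterated and the simultaneous normal-ordered integrals, after a short algebraic simplification of each side; I treat \eqref{eqn:action plus}, the case of \eqref{eqn:action minus} being analogous with $\int^-$ replacing $\int^+$ throughout. First I would compute the composition directly. Applying the operator \eqref{eqn:action plus} attached to $F=e_{i_n,d_n}$ to a class $p(\bX_{\bv})$ introduces one integration variable $z_n$ of colour $i_n$ and produces the $\int^+_{z_n}$ of $z_n^{d_n}\, p(\bX_{\bv+\bs^{i_n}}-z_n)\, \tzeta(z_n/\bX_{\bv+\bs^{i_n}})\, \wedge^\bullet(z_nq/\bW)$; then the operator attached to $F=e_{i_{n-1},d_{n-1}}$ substitutes this whole expression into the plethysm slot $p(\,\cdot\,-z_{n-1})$ of its own defining formula, and so on. The point is that both the plethysm \eqref{eqn:plethysm minus 2} and the factor $\tzeta(\,\cdot\,/\bX)$ from \eqref{eqn:zeta plus} are multiplicative in the alphabet, so that removing $z_{n-1}$ from the alphabet turns $p(\bX-z_n)$ into $p(\bX-z_{n-1}-z_n)$ and divides $\tzeta(z_n/\bX)$ by the single factor $\tzeta_{i_ni_{n-1}}(z_n/z_{n-1})$. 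Iterating, the composition of the $n$ operators sends $p(\bX_{\bv})$ to
\[
\int^+_{z_1}\cdots\int^+_{z_n}\ \frac{\prod_{a=1}^{n}z_a^{d_a}}{\prod_{1\le a<b\le n}\tzeta_{i_bi_a}(z_b/z_a)}\ p\big(\bX_{\bv+\bn}-z_1-\cdots-z_n\big)\ \prod_{a=1}^{n}\tzeta\big(z_a/\bX_{\bv+\bn}\big)\ \prod_{a=1}^{n}\wedge^\bullet\big(z_aq/\bW\big),
\]
where $\bn=\bs^{i_1}+\cdots+\bs^{i_n}$; the iterated plethysm composes to the single plethysm \eqref{eqn:plethysm minus 2} for any fixed presentation of $p$, so no ambiguity enters here.

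\emph{Simplifying the direct formula.} Iterating \eqref{eqn:shuf prod}, the element $(e_{i_1,d_1}*\cdots*e_{i_n,d_n})(\bZ_{\bn})$ equals the sum, over the $\bn!$ colour-preserving bijections $\tau$ from the $n$ slots to the variables of $\bZ_{\bn}$, of $\prod_a z_{\tau(a)}^{d_a}\prod_{a<b}\zeta_{i_ai_b}(z_{\tau(a)}/z_{\tau(b)})$. Dividing by $\tzeta(\bZ_{\bn}/\bZ_{\bn})$ and using $\zeta_{ii}(x)=\tzeta_{ii}(x)(1-x/q)(1-1/(qx))$, which is immediate from \eqref{eqn:def tzeta}, each summand collapses — factor by factor over the pairs $(a,b)$ — to $\prod_a z_{\tau(a)}^{d_a}/\prod_{a<b}\tzeta_{i_bi_a}(z_{\tau(b)}/z_{\tau(a)})$. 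Since the remaining ingredients $p(\bX_{\bv+\bn}-\bZ_{\bn})$, $\tzeta(\bZ_{\bn}/\bX_{\bv+\bn})=\prod_a\tzeta(z_a/\bX_{\bv+\bn})$ and $\wedge^\bullet(\bZ_{\bn}q/\bW)=\prod_a\wedge^\bullet(z_aq/\bW)$, as well as the integral $\int^+$ itself, are symmetric under permuting variables of a fixed colour, all $\bn!$ summands contribute equally, and together with the prefactor $\tfrac{1}{\bn!}$ the right-hand side of \eqref{eqn:action plus} for $F=e_{i_1,d_1}*\cdots*e_{i_n,d_n}$ equals
\[
\int^+_{(z_1,\dots,z_n)}\ \frac{\prod_{a=1}^{n}z_a^{d_a}}{\prod_{1\le a<b\le n}\tzeta_{i_bi_a}(z_b/z_a)}\ p\big(\bX_{\bv+\bn}-z_1-\cdots-z_n\big)\ \prod_{a=1}^{n}\tzeta\big(z_a/\bX_{\bv+\bn}\big)\ \prod_{a=1}^{n}\wedge^\bullet\big(z_aq/\bW\big).
\]
This has exactly the same integrand as the composition above, so the Proposition becomes the identity $\int^+_{z_1}\cdots\int^+_{z_n}=\int^+_{(z_1,\dots,z_n)}$ applied to this integrand.

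\emph{The contour identity, which is the crux.} Both sides of the last identity are sums over the $2^n$ sign functions $\sigma:\{z_1,\dots,z_n\}\to\{\pm1\}$ of genuine contour integrals over the circles $|z_a|=r^{\sigma(a)}$, and the hard part will be to check that evaluating them in the nested order coming from the composition gives, summand by summand in $\sigma$, the same residues as evaluating them simultaneously. This is exactly what the superscript conventions ``$|q/t_e|^{\pm1},|t_e|^{\pm1}>1$'' in \eqref{eqn:normal int plus} are meant to ensure, and the verification follows the pole bookkeeping in the proof of \cite[Theorem 3.17]{Laumon}. Concretely: apart from the poles of the factors $\tzeta(z_a/\bX)$ at the fixed $\bX$-locations, which are unaffected by the order of integration, the only poles of the integrand in the $z$-variables come from the simple factors $\tzeta_{i_bi_a}(z_b/z_a)^{-1}$, located at $z_b/z_a\in\{t_e^{-1},\,t_e/q\}$ for $e$ a loop at $i_a=i_b$; since this factor moreover vanishes at $z_a=z_b$, two circles of equal radius never run through a pole, and the superscript conditions place each pole $z_b=z_a/t_e$ and $z_b=z_at_e/q$ on the prescribed side of every circle, so the iterated and the simultaneous evaluations match term by term. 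No new phenomenon arises for $n\ge3$, since the $z$-poles of the integrand remain these pairwise loci. I expect this contour analysis, rather than the preceding algebra, to be the only genuinely delicate step.
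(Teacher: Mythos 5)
Your route is essentially the paper's: both reduce the statement to showing that the composition and the right-hand side of \eqref{eqn:action plus} are contour integrals of one and the same integrand, differing only in the contour prescription, and both ultimately defer the delicate contour comparison to \cite[Theorem 3.17]{Laumon}. Your algebraic steps are correct — the collapse of $F(\bZ_{\bn})/\tzeta(\bZ_\bn/\bZ_\bn)$ to $\prod_a z_{\tau(a)}^{d_a}/\prod_{a<b}\tzeta_{i_bi_a}(z_{\tau(b)}/z_{\tau(a)})$ checks out, as does the cancellation of the $\bn!$ against the symmetrization.

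Two points in your ``crux'' step deserve repair. First, the poles of $\tzeta_{i_bi_a}(z_b/z_a)^{-1}$ sit at $z_b/z_a=1/t_e$ for edges $e=\overrightarrow{i_bi_a}$ and at $z_b/z_a=t_e/q$ for edges $e=\overrightarrow{i_ai_b}$, i.e.\ they arise from \emph{every} edge joining $i_a$ and $i_b$, not only from edge loops at a common vertex; your pole inventory (and the remark about the vanishing at $z_a=z_b$, which only applies when $i_a=i_b$) is therefore incomplete, even though the superscript conditions $|t_e|^{\pm1},|q/t_e|^{\pm1}>1$ still keep all of these loci off the circles. Second, writing the composition as the iterated equal-radius $\int^+_{z_1}\cdots\int^+_{z_n}$ of the combined integrand quietly assumes that the plethystic substitution $\bX\mapsto\bX-z_{a}$ commutes with the inner residue computations without picking up the newly created poles at $z_b=z_a/t_e$, $z_b=z_at_e/q$; this is precisely why the paper first records the composition as the integral \eqref{eqn:action plus explicit} over widely separated nested contours $\{0,\infty\}\succ z_1\succ\dots\succ z_n$ (where those poles are unambiguously placed) and only then compares with $\int^+$. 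You should make that intermediate nested-contour form explicit before deforming all contours to the common radii $r^{\pm1}$; with that insertion your argument coincides with the intended one.
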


\noindent Indeed, it is easy to see that the composition of the right-hand sides of formulas \eqref{eqn:action plus} and \eqref{eqn:action minus} for $F = e_{i_1,d_1}$, \dots, $F = e_{i_n,d_n}$ and $G = f_{i_1,d_1}$, \dots, $G = f_{i_n,d_n}$ is:
\begin{multline}
\int_{\{0,\infty\} \succ z_1 \succ \dots \succ z_n} \frac {z_1^{d_1} \dots z_n^{d_n}}{\prod_{1 \leq a < b \leq n} \tzeta_{i_bi_a} \left( \frac {z_b}{z_a} \right)} \\ p(\bX_{\bv+\bn} - \bZ_{\bn}) \tzeta\left(\frac {\bZ_{\bn}}{\bX_{\bv+\bn}}\right) \wedge^\bullet \left(\frac {\bZ_{\bn} q}{\bW} \right) \prod_{a = 1}^n \frac {dz_a}{2\pi \sqrt{-1}z_a}   \label{eqn:action plus explicit}
\end{multline}
\begin{multline}
\int_{\{0,\infty\} \succ z_1 \succ \dots \succ z_n} \frac {z_{1}^{d_1} \dots z_{n}^{d_n}}{\prod_{1 \leq a < b \leq n} \tzeta_{i_ai_b} \left( \frac {z_a}{z_b} \right)} \\ p(\bX_{\bv-\bn} + \bZ_{\bn}) \tzeta\left(\frac {\bX_{\bv-\bn}}{\bZ_{\bn}} \right)^{-1} \wedge^\bullet \left(\frac {\bZ_{\bn}}{\bW} \right)^{-1} \prod_{a = 1}^n \frac {dz_a}{2\pi \sqrt{-1}z_a} \label{eqn:action minus explicit}
\end{multline}
The notation $\int_{\{0,\infty\} \succ z_1 \succ \dots \succ z_n}$ means that the variable $z_1$ is integrated over a contour in the complex plane which surrounds $0$ and $\infty$, the variable $z_2$ is integrated over a contour which surrounds the previous contour etc., and the contours are also far away from each other compared to the size of the equivariant parameters $q,t_e$. \\

\noindent Moreover, in formulas \eqref{eqn:action plus explicit}--\eqref{eqn:action minus explicit}, we implicitly identify the variables:
$$
\{z_1,\dots,z_n\} \leftrightarrow \{...,z_{i1},\dots,z_{in_i},\dots\}_{i \in I}
$$
by mapping $z_a$ in a one-to-one way to some $z_{i_a \bullet}$ (the specific choice of $\bullet \in \BN$ does not matter due to the symmetry of all expressions involved in the variables which make up $\bZ_{\bn}$). Note that we need $n = |\bn|$ in order for this notation to be consistent. We leave the equivalence of \eqref{eqn:action plus}--\eqref{eqn:action minus} for the shuffle elements \eqref{eqn:special} with formulas \eqref{eqn:action plus explicit}--\eqref{eqn:action minus explicit} as an exercise to the interested reader (it closely follows the analogous computation in \cite[Proof of Theorem 3.17]{Laumon}, which dealt with a close relative of our construction in the particular case when $Q$ is the cyclic quiver). 

\begin{proof} \emph{of Theorem \ref{thm:action}:} Our main task will be to establish the following claim: \\

\begin{claim}
\label{claim:proof}

The case $F = e_{i,d}$ of \eqref{eqn:action plus} yields the same formula as \eqref{eqn:op e}, $\forall i \in I$, $d \in \BZ$. Similarly, the case $G = f_{i,d}$ of \eqref{eqn:action minus} yields the same formula as \eqref{eqn:op f}. \\

\end{claim}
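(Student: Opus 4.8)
The plan is to reduce both sides to a single equivariant $K$-theoretic pushforward along a Hecke correspondence and to identify the resulting rational function with the shuffle-theoretic integrand; this generalizes \cite[Theorem 4.7]{K-th} (Jordan quiver) and \cite[Proof of Theorem 3.17]{Laumon} (cyclic quiver), the only genuinely new feature being bookkeeping over a general edge set $E$. By \cite[Theorem 1.2]{MN} tautological classes span, so it suffices to check the $e$-case on a class $p(\bX_{\bv^-})$ and the $f$-case on a class $p(\bX_{\bv^+})$, where $\bv^\pm\in\nn$ and $\bv^+=\bv^-+\bs^i$. For $F=e_{i,d}=z_{i1}^d$ we have $\bn=\bs^i$, so $|\bn|=1$, the factor $\tzeta(\bZ_{\bn}/\bZ_{\bn})$ in \eqref{eqn:action plus} is an empty product, and \eqref{eqn:action plus} becomes the one-variable instance of $\int^+$ in $z:=z_{i1}$:
$$
p(\bX_{\bv^-})\ \longmapsto\ \int^+ z^{d}\, p(\bX_{\bv^+}-z)\,\tzeta\!\left(\tfrac{z}{\bX_{\bv^+}}\right)\wedge^\bullet\!\left(\tfrac{zq}{W_i}\right)\frac{dz}{2\pi\sqrt{-1}\,z}.
$$

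First I would recall the standard local model of the Hecke correspondence $\CN_{\bv^+,\bv^-,\bw}$ from \cite[Section 5]{Nak}: a point of it is a point of $\CN_{\bv^+,\bw}$ together with a codimension-one subspace $V_i^-\subset V_i^+$ preserved by the relevant maps, and $\CL_i=V_i^+/V_i^-$ is the tautological line bundle, so that $[V_i^-]=[V_i^+]-l_i$ with $l_i=[\CL_i]$. Hence the input class pulled back along $\pi_-$ becomes $\pi_-^*\,p(\bX_{\bv^-})=p(\bX_{\bv^+}-z)$ on the correspondence, with $z\leftrightarrow l_i$; this is the ``minus plethysm'' \eqref{eqn:plethysm minus}, and it also explains why the ambiguity in \eqref{eqn:plethysm minus 1}--\eqref{eqn:plethysm minus 2} is immaterial, since on $\CN_{\bv^+,\bv^-,\bw}$ the expression $p(\bX_{\bv^-})$ is an honest $K$-theory class. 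It then remains to compute $\pi_{+*}$ of this class times the determinant-line-bundle prefactor of \eqref{eqn:op e}.

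The key step, which I expect to be the main obstacle, is the pushforward formula. The map $\pi_+$ factors through a projective bundle over $\CN_{\bv^+,\bw}$ (the bundle of codimension-one subspaces of $V_i^+$), so by the projective-bundle / Grothendieck-residue formula, $\pi_{+*}$ of any class that is a Laurent polynomial in the tautological Chern roots and in $l_i$ equals the one-variable residue in $z=l_i$ of the integrand divided by the $K$-theoretic Euler class of the relative (co)normal bundle of $\pi_+$. Identifying this Euler class is the delicate bookkeeping: from the $T_{\bw}$-action conventions of Subsection \ref{sub:k-theory} and the explicit moment map \eqref{eqn:moment}, the relative directions decompose edge by edge, loop by loop, and framing by framing according exactly to the factors of $\zeta_{ij}$ in \eqref{eqn:def zeta} --- an arrow $e=\oij$ contributes $\left(\tfrac1{t_e}-\tfrac{z}{x_{jb}}\right)$ from $\Hom$ into $V_j$, an arrow $e=\oji$ contributes $\left(1-\tfrac{t_e x_{jb}}{qz}\right)$ from the cotangent direction, the $\delta^i_j$-factors $\left(1-\tfrac zx\right)^{-1}\!\left(1-\tfrac x{qz}\right)^{-1}$ from $\Hom(V_i,V_i)$ and its cotangent, and $\wedge^\bullet\!\left(\tfrac{zq}{W_i}\right)$ from the framing maps $\Hom(W_i,V_i)\oplus\Hom(V_i,W_i)$. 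Thus the Euler class is $\tzeta(z/\bX_{\bv^+})\,\wedge^\bullet(zq/W_i)$ up to the determinant twists and equivariant constants already carried by \eqref{eqn:op e}, and the location of the poles of the integrand (at the tautological weights $x_{jb}$ and the framing weights) forces the contour to be the one prescribed by $\int^+$ in \eqref{eqn:normal int plus}, i.e.\ surrounding $0$ and $\infty$, with the small-/large-circle dichotomy encoding $|t_e|,|q/t_e|>1$. (When $\bw$ is generic and the torus fixed points are isolated, one can alternatively verify the match fixed-point by fixed-point via equivariant localization.)

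The case $G=f_{i,d}$ against \eqref{eqn:op f} is entirely parallel, with the roles of $\pi_+$ and $\pi_-$ exchanged: the input class lives on $\CN_{\bv^+,\bw}$, $\pi_+^*$ produces the ``plus plethysm'' $p(\bX_{\bv^-}+z)$ of \eqref{eqn:plethysm plus} (since $[V_i^+]=[V_i^-]+l_i$), and $\pi_{-*}$ yields the integrand of \eqref{eqn:action minus}, in which $\tzeta(z/\bX_{\bv^+})$ is replaced by $\tzeta(\bX_{\bv^-}/z)^{-1}$ and $\wedge^\bullet(zq/W_i)$ by $\wedge^\bullet(z/W_i)^{-1}$; the inverses reflect that one pushes forward along the other leg of the Lagrangian correspondence, so the normal directions switch between $V$-directions and their cotangents, and the extra $(\det W_i)$ and $\det V_i^-$ twists in \eqref{eqn:op f} relative to \eqref{eqn:op e} are exactly the discrepancy between the relative dualizing sheaves of $\pi_-$ and $\pi_+$ (Serre duality on $\CN_{\bv^+,\bv^-,\bw}$). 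Granting Claim \ref{claim:proof}, Proposition \ref{prop:normal} then promotes it to the statement that \eqref{eqn:action plus}--\eqref{eqn:action minus} for products of generators compute the corresponding compositions, and combining this with the known fact \cite{Nak,YZ} that the operators \eqref{eqn:op e}--\eqref{eqn:op h} generate a genuine action of $\CA$ completes the proof of Theorem \ref{thm:action}.
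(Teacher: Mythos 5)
Your plan is correct and follows essentially the same route as the paper: there, the Hecke correspondence is realized as the (dg-)projectivizations $\BP_{\CN_{\bv^-,\bw}}(U_i)$ and $\BP_{\CN_{\bv^+,\bw}}(U_i^\vee[1]\cdot q)$ of the three-term Nakajima complex \eqref{eqn:complex}, and the pushforward formula of \cite[Proposition 5.19]{Shuf surf} is exactly your residue-over-the-Euler-class computation, with the Euler class being $\wedge^\bullet(z_{i1}q/U_i)$ (resp.\ $\wedge^\bullet(-U_i/z_{i1})$), which upon expanding $U_i$ in terms of the $V_j$ and $W_i$ produces precisely the $\tzeta$ and $\wedge^\bullet(z_{i1}q/W_i)$ factors together with the determinant twists of \eqref{eqn:op e}--\eqref{eqn:op f}. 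The one point to make precise is that $\CN_{\bv^+,\bv^-,\bw}$ is not the full bundle of hyperplanes in $V_i^+$ but the derived zero locus of a section inside it, which is exactly why the ``relative conormal'' contributions you enumerate (edges in both orientations, the moment-map and gauge directions giving the $\delta_j^i$ denominators, and the framing) must be assembled into the class of the complex $U_i$ rather than treated as an honest normal bundle.
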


\noindent This claim establishes the fact that formulas \eqref{eqn:action plus}--\eqref{eqn:action minus} yield well-defined operators on $K_{\bw}$ when $F = e_{i,d}$ and $G = f_{i,d}$, respectively. The meaning of the phrase ``well-defined" in the previous sentence is that:
$$
\text{if } p(\bX_{\bv}) = 0, \text{ then the RHS of \eqref{eqn:action plus}--\eqref{eqn:action minus} is also }0
$$
(in particular, the right-hand side of \eqref{eqn:action plus} does not depend on the choices we made in defining \eqref{eqn:plethysm minus}).  By Proposition \ref{prop:normal}, formulas \eqref{eqn:action plus} and \eqref{eqn:action minus} also yield well-defined operators on $K_{\bw}$ for any $F$ and $G$ of the form \eqref{eqn:special}. Since by Theorem \ref{thm:generate}, any element of $\CS$ and $\CS^{\op}$ is a linear combination of such $F$'s and $G$'s, this proves that formulas \eqref{eqn:action plus} and \eqref{eqn:action minus} are well-defined for any $F \in \CA^+$ and any $G \in \CA^-$. Similarly, the fact that the aforementioned formulas are multiplicative in $F$ and $G$ (thus implying that \eqref{eqn:action plus}--\eqref{eqn:action minus} yield actions $\CA^\pm \curvearrowright K_{\bw}$) is an immediate consequence of Proposition \ref{prop:normal}. To prove that the actions $\CA^\pm \curvearrowright K_{\bw}$ glue to an action $\CA \curvearrowright K_{\bw}$, one only needs to check relations \eqref{eqn:rel double 1}--\eqref{eqn:rel double 3}; this closely follows the $Q = $ cyclic quiver case treated in \cite[Theorem II.9.]{Thesis}. \\

\begin{proof} \emph{of Claim \ref{claim:proof}:} Consider the following complex of vector bundles on $\CN_{\bv,\bw}$:
\begin{equation}
\label{eqn:complex}
U_i = \left[V_i \cdot q \xrightarrow{(B_i,-X_e,Y_{e'})} W_i \oplus \bigoplus_{e = \oij} V_j \cdot \frac q{t_e} \oplus \bigoplus_{e' = \oji} V_j \cdot t_e \xrightarrow{(A_i,Y_e,X_{e'})} V_i  \right]
\end{equation}
(which originated in \cite{Nak 0}) with the middle term in homological degree 0. By the stability condition, the second arrow is point-wise surjective, and thus its kernel $K_i$ is a vector bundle; thus $U_i$ is quasi-isomorphic to a complex $[V_i \cdot q \rightarrow K_i]$ of vector bundles. For such a complex, we may construct the projectivization: 
$$
\BP_{\CN_{\bv,\bw}}(U_i) = \text{Proj}_{\CN_{\bv,\bw}}(\text{Sym}^\bullet(U_i))
$$
as a dg-scheme over $\CN_{\bv,\bw}$ (see \cite[Section 5.18]{Shuf surf} for our notational conventions), and analogously for the dual complex $U_i^\vee[1] \cdot q = [K_i^\vee \cdot q \rightarrow V_i^\vee]$. With this in mind, it is well-known that we have isomorphisms:
\begin{align}
&\CN_{\bv^+,\bv^-,\bw} \cong \BP_{\CN_{\bv^-,\bw}} (U_i) \label{eqn:proj minus} \\
&\CN_{\bv^+,\bv^-,\bw} \cong \BP_{\CN_{\bv^+,\bw}} (U_i^\vee[1] \cdot q) \label{eqn:proj plus} 
\end{align}
with respect to which the line bundle $\CL_i$ is isomorphic to $\CO(1)$ and $\CO(-1)$, respectively. A straightforward computation, which follows directly from the well-known formulas in \cite[Proposition 5.19]{Shuf surf}, yields for any tautological class $p$ as in \eqref{eqn:tautological class}:

\begin{align*}
&E_{i,d}(p(\bX_{\bv^-})) = \int^+ z_{i1}^d  p(\bX_{\bv^+} - \bZ_{\bs^i}) \frac {\prod^{j \in I}_{e = \oij} t_e^{-v_j} \prod^{j \in I}_{e = \oji} \left(\det V_j \right) \left( \frac {-t_e}{z_{i1} q} \right)^{v_j}}{\left(\det V_i\right) \left( \frac {-1}{z_{i1} q}\right)^{v_i}}  \wedge^\bullet \left(\frac {z_{i1}q}{U_i} \right)  \\
&F_{i,d}(p(\bX_{\bv^+})) = \int^- z_{i1}^d p(\bX_{\bv^-} + \bZ_{\bs^i}) \frac {\prod^{j \in I}_{e = \oij} (\det V_j) \left( \frac {-q}{z_{i1} t_e} \right)^{v_j} \prod^{j \in I}_{e = \oji} t_e^{-v_j}}{(\det W_i)^{-1} \left(-z_{i1} \right)^{r_i} (\det V_i) \left(\frac {-q}{z_{i1}} \right)^{v_i}} \wedge^\bullet \left(- \frac {U_i}{z_{i1}} \right)
\end{align*}
One can express $U_i$ in terms of the vector bundles $V_i$ and the trivial vector bundles $W_i$ using \eqref{eqn:complex}, and one notices that the right-hand sides of the formulas above are precisely the right-hand sides of \eqref{eqn:action plus} and \eqref{eqn:action minus} when $F = e_{i,d}$ and $G = f_{i,d}$.


\end{proof}

\end{proof}

\subsection{}
\label{sub:coproducts}

As a consequence of Theorem \ref{thm:action}, we obtain an algebra homomorphism:
\begin{equation}
\label{eqn:action product}
\CA \xrightarrow{\Psi} \prod_{\bw \in \nn} \text{End}(K_{\bw})
\end{equation} 
To make the map $\Psi$ into a bialgebra homomorphism, one needs to place a coproduct on the right-hand side, which interweaves the modules $K_{\bw}$ as $\bw$ varies over $\BN$. Consider any $\bw^1,\bw^2 \in \nn$, let $\bw = \bw^1+\bw^2$, and take the one-parameter subgroup:
\begin{equation}
\label{eqn:torus}
\BC^* \ni t \xrightarrow{\tau} \prod_{i \in I} \text{diag} (\underbrace{1,\dots,1}_{w^1_i \text{ times}},\underbrace{t,\dots,t}_{w_i^2 \text{ times}} ) \in \prod_{i\in I} GL(W_i)
\end{equation}
The fixed locus of $\tau$ acting on $\CN_{\bv,\bw}$ is:
\begin{equation}
\label{eqn:fixed locus}
\CN_{\bv,\bw}^{\BC^*} \cong \bigsqcup_{\bv^1+\bv^2 = \bv} \CN_{\bv^1,\bw^1} \times \CN_{\bv^2,\bw^2} \stackrel{\iota}\hookrightarrow \CN_{\bv,\bw}
\end{equation}
consisting of quadruples \eqref{eqn:quadruples} which respect fixed direct sum decompositions $V_i = V_i^1 \oplus V_i^2$ and $W_i = W_i^1 \oplus W_i^2$. We have a decomposition of the normal bundle: 
$$
T_{\CN_{\bv,\bw}/\CN_{\bv^1,\bw^1} \times \CN_{\bv^2,\bw^2}} = T^+ \oplus T^-
$$
where $T^+$ (respectively $T^-$) consists of the attracting (respectively repelling) sub-bundles with respect to the action of the one-parameter subgroup $\tau$ of \eqref{eqn:torus}. Because $\tau$ preserves the holomorphic symplectic form on $\CN_{\bv,\bw}$ (which we have not defined), the sub-bundles $T^+$ and $T^-$ are dual to each other, and so have the same rank. This allows us to think of $T^+$ as ``half" of the normal bundle, and set:
\begin{equation}
\label{eqn:upsilon}
\Upsilon : K_{\bv,\bw} \xrightarrow{\wedge^\bullet \left(- T^{+ \vee} \right) \cdot \iota^*} \bigoplus_{\bv = \bv^1+\bv^2} K_{\bv^1,\bw^1} \otimes K_{\bv^2,\bw^2}
\end{equation}
 Conjugation with the (product over all $\bv,\bw \in \nn$ of the) map $\Upsilon$ yields a coproduct:
\begin{equation}
\label{eqn:end coproduct}
\prod_{\bw \in \nn} \text{End}(K_{\bw}) \longrightarrow \prod_{\bw^1 \in \nn} \text{End}(K_{\bw^1}) \ \widehat{\otimes} \ \prod_{\bw^2 \in \nn} \text{End}(K_{\bw^2})
\end{equation}
It is straightforward to check that the map $\Psi$ of \eqref{eqn:action product} intertwines the coproduct on $\CA$ of \eqref{eqn:cop 1}--\eqref{eqn:cop 3} with the coproduct \eqref{eqn:end coproduct}. Explicitly, this boils down to the commutativity of the following squares, which we leave as exercises to the interested reader:

$$
\xymatrixcolsep{5pc}\xymatrix{
K_{\bw} \ar[dd]_{\Upsilon}  \ar@/_2pc/[r]^{F_i(z)}  \ar@/^2pc/[r]^{E_i(z)} \ar[r]^{H^\pm_i(z)} & K_{\bw} \ar[dd]^{\Upsilon} \\ \\
K_{\bw^1} \otimes K_{\bw^2} \ar@/_2pc/[r]_{F_i(z) \otimes H^-_i(z) + 1 \otimes F_i(z)} \ar@/^2pc/[r]^{E_i(z) \otimes 1 + H^+_i(z) \otimes E_i(z)} \ar[r]^{H^\pm_i(z) \otimes H^\pm_i(z)} &K_{\bw^1} \otimes K_{\bw^2}}
$$
(above, let $E_i(z) = \sum_{d \in \BZ} \frac {E_{i,d}}{z^d}$ and $F_i(z) = \sum_{d \in \BZ} \frac {F_{i,d}}{z^d}$) for any $\bw = \bw^1+\bw^2$ in $\nn$. \\


\subsection{}

The construction of the previous Subsection (which is by now folklore among the experts) is quite straightforward, but unfortunately has a number of drawbacks. The first is that it heavily uses equivariant localization. The second is that it only produces the topological coproduct $\Delta$, instead of the more desirable coproducts discussed in Subsection \ref{sub:other coproducts} (chief among which is the Drinfeld-Jimbo coproduct). \\

\noindent To remedy these issues, \cite{Nak 2} suggested to consider the attracting subvariety of the fixed point locus $\CN_{\bv,\bw}^{\BC^*} \hookrightarrow \CN_{\bv,\bw}$, as a replacement for the class $\wedge^\bullet \left(- T^{+\vee} \right)$ in \eqref{eqn:upsilon}. Through a wide-reaching framework that pertains to conical symplectic resolutions, \cite{MO} defined a specific class on the disjoint union of all attracting subvarieties, called the \textbf{stable basis}, which gives a better analogue of the map \eqref{eqn:upsilon}. The $K$-theoretic version of this construction was developed in \cite{AO,O1,O2,OS}, thus yielding a map:
\begin{equation}
\label{eqn:stable basis}
\Upsilon_{\bm} : K_{\bv,\bw} \longrightarrow \bigoplus_{\bv = \bv^1+\bv^2} K_{\bv^1,\bw^1} \otimes K_{\bv^2,\bw^2}
\end{equation}
for any decomposition $\bw = \bw^1+\bw^2$ in $\nn$ and any $\bm \in \qq$. As explained in \cite{MO}, applying the FRT formalism to the maps \eqref{eqn:stable basis} gives rise to a Hopf algebra:
\begin{equation}
\label{eqn:mo}
U_q(\widehat{\mathfrak{g}}^Q) \subset \prod_{\bw \in \nn} \text{End}(K_{\bw})
\end{equation}
A well-known conjecture in the field (see \cite[Conjecture 1.2]{Pad 2} or \cite[Conjecture]{SV Coh} for various incarnations) posits that the integral version of the Hopf algebra \eqref{eqn:mo} is isomorphic to the double $K$-theoretic Hall algebra \eqref{eqn:k-ha}. As the localization \eqref{eqn:localization} was shown to be isomorphic to the shuffle algebra $\CS$ in \cite{N}, we propose the following. \\

\begin{conjecture}
\label{conj:mo}

The map \eqref{eqn:action product} yields an isomorphism $\CA \cong U_q(\widehat{\mathfrak{g}}^Q)$. \\

\end{conjecture}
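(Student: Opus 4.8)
The plan is to prove the isomorphism in two stages: first, that the algebra homomorphism $\Psi$ of \eqref{eqn:action product} is injective; second, that its image is precisely the RTT subalgebra $U_q(\widehat{\mathfrak{g}}^Q)$. Injectivity should follow from the faithfulness of the geometric action on the sum of all modules $K_{\bw}$. Concretely, for a framing $\bw$ with all $w_i$ large, the map sending $F \in \CA^+ = \CS$ to its action on the vacuum class $1 \in K_{\b0,\bw}$ recovers $F$ itself, up to the invertible normalization factors and plethystic substitutions appearing in \eqref{eqn:action plus}; this is an incarnation of \cite[Theorem 1.2]{MN} together with the isomorphism \eqref{eqn:hom to shuffle}. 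The same argument applied to lowest-weight classes handles $\CA^- = \CS^{\op}$, the Cartan generators $h_{i,\pm d}$ act through pairwise-distinct weight data, and the triangular decomposition $\CA = \CA^{\geq} \otimes \CA^{\leq}$ then upgrades these partial statements to injectivity of $\Psi$ on all of $\CA$.

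The identification of $\text{Im}\,\Psi$ with $U_q(\widehat{\mathfrak{g}}^Q)$ proceeds by matching $R$-matrices. On the geometric side, $U_q(\widehat{\mathfrak{g}}^Q)$ is generated by the matrix coefficients of the geometric $R$-matrices built from the stable-basis maps $\Upsilon_{\bm}$ of \eqref{eqn:stable basis} (together with Cartan operators), and the wall-crossing formalism of \cite{MO} and its $K$-theoretic refinement \cite{AO,O1,O2,OS} factors each such $R$-matrix as an ordered product over the walls, i.e. slopes, that are crossed. The target statement is that, under $\Psi$, this factorization matches \eqref{eqn:factorization}: namely $\Psi(\CR'_{\bm+r\bth})$ is the $r$-th wall $R$-matrix, or equivalently the coproduct $\Delta_{(\bm)}$ of \eqref{eqn:other coproducts}, transported to $\prod_{\bw}\text{End}(K_{\bw})$ via $\Psi$, coincides with the coproduct assembled from $\Upsilon_{\bm}$. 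Granting this, one direction of the equality of images is immediate — every factor of \eqref{eqn:factorization} lies in $\text{Im}\,\Psi$ because $\CR'$ does — while the reverse inclusion $U_q(\widehat{\mathfrak{g}}^Q) \subseteq \text{Im}\,\Psi$ follows since the generators $e_{i,d}, f_{i,d}$ are recovered from the leading behaviour of the extreme wall $R$-matrices, exactly as the $\mathfrak{sl}_2$-type raising and lowering operators are extracted in \cite{MO}. To prove the crucial coproduct-matching one would compare the explicit shuffle-algebra formulas \eqref{eqn:cop b plus}--\eqref{eqn:cop b minus} for $\Delta_{\bm}$, pushed through Theorem \ref{thm:action}, against Okounkov's residue description of the stable-basis restriction map for the slope polarization $\bm$; the localization formulas \eqref{eqn:action plus}--\eqref{eqn:action minus} and the structure of the attracting sets over the fixed loci \eqref{eqn:fixed locus} must be matched term by term.

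The main obstacle is the geometric description of the slope subalgebras themselves, that is, Problem \ref{prob:2}: one must show that, inside $K_{\loc} \cong \CS$, the subalgebra $\CB_{\bm}$ equals the ``wall subalgebra'' generated by the matrix coefficients of a single wall $R$-matrix — plausibly the one spanned by tautological classes satisfying the rigidity condition attached to the wall $\bm$. Establishing this for an arbitrary quiver $Q$ is delicate, above all when $Q$ has edge loops: there the geometry of Nakajima quiver varieties is substantially less understood, the Hecke-correspondence picture of Subsection \ref{sub:nakajima correspondencs} degenerates (the quantum loop group action there was constructed only for loopless $Q$), and even the existence and good functorial properties of $K$-theoretic stable bases require care. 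Overcoming this — transporting the slope filtration on the shuffle side to the wall filtration on the geometric side uniformly in $Q$ — is the heart of the conjecture, and is presumably why it is stated as such rather than proved here.
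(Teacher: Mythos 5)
The statement you are addressing is stated in the paper as Conjecture \ref{conj:mo} and is \emph{not} proved there; the paper only records it as an expectation, citing \cite{Pad 2} and \cite{SV Coh} for related open conjectures. Your text is accordingly a roadmap rather than a proof, and you say as much in your final paragraph. The roadmap itself is reasonable and consistent with how the paper frames the problem (faithfulness of $\Psi$, plus identification of $\text{Im}\,\Psi$ with the FRT algebra by matching the factorization \eqref{eqn:factorization} with the wall factorization of the geometric $R$-matrix, which the paper itself says ``should match'' in the discussion following \eqref{eqn:subalg mo}). But each of the three pillars you lean on is itself an unproved assertion at the level of generality required.

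Concretely: (i) your injectivity argument via the vacuum class is not an incarnation of \cite[Theorem 1.2]{MN} --- that result says tautological classes \emph{span} $K_{\bv,\bw}$ (surjectivity of a Kirwan-type map), whereas you need that the integral transform \eqref{eqn:action plus} applied to $1 \in K_{\b0,\bw}$ is injective in $F$, i.e.\ faithfulness of the module $K_{\bw}$ for large $\bw$; the output of \eqref{eqn:action plus} is a sum of residues of $F$ against the kernel $\tzeta$, and recovering $F$ from these evaluations is a nontrivial statement that the paper nowhere establishes for general $Q$ (it is known for the Jordan and cyclic quivers via Fock modules). (ii) The ``coproduct-matching'' step --- that $\Psi$ carries $\Delta_{(\bm)}$ of \eqref{eqn:other coproducts} to the coproduct built from the stable-basis maps $\Upsilon_{\bm}$ of \eqref{eqn:stable basis}, equivalently that $\Psi(\CR'_{\bm+r\bth})$ is the geometric wall $R$-matrix --- is precisely the content of the conjecture together with the expected commutative square relating $\CB_{\bm}$ and $U_q(\mathfrak{g}^Q_{\bm})$; deferring it to a ``term by term'' comparison of \eqref{eqn:cop b plus}--\eqref{eqn:cop b minus} with residue formulas for stable envelopes is not an argument but a restatement of Problem \ref{prob:2}. (iii) Even granting (ii), your reverse inclusion $U_q(\widehat{\mathfrak{g}}^Q) \subseteq \text{Im}\,\Psi$ presupposes that the generators of the FRT algebra are exhausted by matrix coefficients of the wall $R$-matrices reachable from $\CR'$, which again depends on the unproved identification of wall subalgebras with slope subalgebras. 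In short, you have correctly mapped the terrain, but every load-bearing step is one of the open problems the paper isolates, so no part of the conjecture is actually established by your proposal.
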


\noindent We remark that the definition of $U_q(\widehat{\mathfrak{g}}^Q)$ relies on many choices that we do not recall here: chambers, alcoves, polarization (see \cite{OS} for an overview). An essential precondition to proving Conjecture \ref{conj:mo} is to properly make these choices such that the map \eqref{eqn:action product} indeed maps $\CA$ into $U_q(\widehat{\mathfrak{g}}^Q)$, although this is straightforward. \\

\subsection{} By comparing the maps \eqref{eqn:stable basis} for $\bm-\boldsymbol{\varepsilon}$ and $\bm+\boldsymbol{\varepsilon}$ (for some $\boldsymbol{\varepsilon} \in \qqp$ very close to $\b0$), the application of the FRT formalism in \cite{OS} yields subalgebras:
\begin{equation}
\label{eqn:subalg mo}
U_q(\mathfrak{g}^Q_{\bm}) \subset U_q(\widehat{\mathfrak{g}}^Q)
\end{equation}
for any $\bm \in \qq$. We expect Conjecture \ref{conj:mo} to match these subalgebras to the slope subalgebras of Subsection \ref{sub:double b}, i.e. that there should be a commutative diagram:
$$
\xymatrix{
\CA  \ar[r]^-\sim & U_q(\widehat{\mathfrak{g}}^Q) \\
\CB_{\bm} \ar@{^{(}->}[u] \ar[r]^-\sim & U_q(\mathfrak{g}^Q_{\bm}) \ar@{^{(}->}[u]}
$$
where the left-most vertical map is prescribed by Proposition \ref{prop:compatible 2}. Moreover, the factorization \eqref{eqn:factorization} should match the analogous factorization of the universal $R$-matrix of $U_q(\widehat{\mathfrak{g}}^Q)$ into the universal $R$-matrices of the subalgebras $U_q(\mathfrak{g}^Q_{\bm+r\bth})$, which is quite tautological in the construction of \cite{OS}. \\

\noindent Of particular interest is the case $\bm = \b0$ of the subalgebra \eqref{eqn:subalg mo}, which is a $q$-deformation of the universal enveloping algebra of the Lie algebra $\mathfrak{g}^Q$ defined by \cite{MO}. Okounkov conjectured that the graded dimension of the latter Lie algebra should be equal to the value of the Kac polynomial of the quiver $Q$. If we knew that $\CB_{\b0} \cong U_q(\mathfrak{g}^Q_{\b0})$, then this conjecture would be equivalent to Conjecture \ref{conj:kac poly}. \\

\end{document}